\pgfplotsset{compat=newest}
\newcommand{\sigmaxstar}{ \sigma_s \left(  x_{\ast } \right)_{\ell_1}}
\pgfplotsset{plot coordinates/math parser=false}
\newlength\figureheight
\newlength\figurewidth
\DeclareMathOperator{\Id}{Id}
\DeclareMathOperator*{\argmin}{arg\,min}
\DeclareMathOperator{\sgn}{sgn}
\DeclareMathOperator{\N}{\mathbb{N}}
\DeclareMathOperator{\R}{\mathbb{R}}
\DeclareMathOperator{\diag}{diag}
\newtheorem{theorem}{Theorem}[section]
\newtheorem{lemma}[theorem]{Lemma}
\newtheorem{proposition}[theorem]{Proposition}
\newtheorem{remark}[theorem]{Remark}
\newtheorem{definition}[theorem]{Definition}
\newcommand\f[1]{\mathbf{#1}}
\newcommand{\onenorm}[1]{\Vert #1 \Vert_1}
\newcommand{\twonorm}[1]{\Vert #1 \Vert_2}
\newcommand{\rhos}{ \rho_s}
\newcommand*\samethanks[1][\value{footnote}]{\footnotemark[#1]}
\title{Iteratively Reweighted Least Squares for Basis Pursuit with Global Linear Convergence Rate}
\author{Christian K\"ummerle\thanks{\href{mailto:kuemmerle@jhu.edu}{kuemmerle@jhu.edu}} \samethanks[4]  \\ 
Department of Applied Mathematics \& Statistics \\ Johns Hopkins University
\And
Claudio Mayrink Verdun\thanks{\href{mailto:verdun@ma.tum.de}{verdun@ma.tum.de}} \samethanks[4]\\
Department of Mathematics and Department of Electrical and Computer Engineering \\
Technical University of Munich
\And
 Dominik St\"oger\thanks{\href{mailto:dominik.stoeger@ku.de}{dominik.stoeger@ku.de}} \thanks{The name order of the authors is alphabetical.} \\
 Department of Mathematics \\ KU Eichstätt–Ingolstadt
 }
\begin{document}

\maketitle

\etocdepthtag.toc{mtchapter}
\etocsettagdepth{mtchapter}{subsection}
\etocsettagdepth{mtappendix}{none}

\begin{abstract}
The recovery of sparse data is at the core of many applications in machine learning and signal processing. While such problems can be tackled using $\ell_1$-regularization as in the LASSO estimator and in the Basis Pursuit approach, specialized algorithms are typically required to solve the corresponding high-dimensional non-smooth optimization for large instances.
Iteratively Reweighted Least Squares (IRLS) is a widely used algorithm for this purpose due its excellent numerical performance. 
However, while existing theory is able to guarantee convergence of this algorithm to the minimizer, it does not provide a global convergence rate. 
In this paper, we prove that a variant of IRLS converges \emph{with a global linear rate} to a sparse solution, i.e., with a linear error decrease occurring immediately from any initialization, if the measurements fulfill the usual null space property assumption. We support our theory by numerical experiments showing that our linear rate captures the correct dimension dependence. We anticipate that our theoretical findings will lead to new insights for many other use cases of the IRLS algorithm, such as in low-rank matrix recovery.
\end{abstract}

\section{Introduction} \label{sec:intro}

The field of sparse recovery deals with the problem of recovering an (approximately) sparse vector $x$ from only few linear measurements, presented by an underdetermined system of linear equations of the form $y=Ax$. One approach to solve this problem is to consider the $\ell_0$-minimization under linear constraints, which is NP-hard in general \cite{natarajan,davis_mallat_avellaneda}. 
For computational reasons, instead of $\ell_0$-minimization, it is common practice to consider its convex relaxation
\begin{equation}\label{basis_pursuit}
\min_{x\in \mathbb{R}^N }||x||_1 \qquad \text{subject to}  \ Ax=y,
\tag{$P_1$}
\end{equation}

where $A \in \mathbb{R}^{m \times N}$, $b \in \mathbb{R}^m$ are given, which is referred to as \emph{$\ell_1$(-norm) minimization} \cite{TaylorBankMcCoy79,claerbout_muir,DonohoLogan92} or \emph{basis pursuit} \cite{chen_donoho,ChenDonohoSaunders01} in the literature. 

Unlike  $\ell_0$-minimization, the optimization program \eqref{basis_pursuit} is computationally tractable in general, and a close relationship of their minimizers has been recognized and well-studied in the theory of compressive sensing \cite{CT06,CRT06b,donoho,FoucartRauhut13}. In statistics and machine learning, an unconstrained variant of \eqref{basis_pursuit}, often called \emph{LASSO}, amounts to the most well-studied tractable estimator for variable selection in high-dimensional inference \cite{tibshirani1996regression,hastie2019statistical,meinshausen2006high}. $\ell_1$-minimization has many other applications and it was even called the \emph{modern least squares} \cite{candes2008enhancing}.

The tractability of \eqref{basis_pursuit} becomes evident from the fact that it can be reformulated as a linear program \cite{tillmann2015equivalence}. However, as many problems of interest in applications are high-dimensional and therefore challenging for standard linear programming methods, many specialized solvers for \eqref{basis_pursuit} have been proposed, such as the Homotopy Method \cite{donoho_tsaig}, primal-dual methods \cite{chambolle2011first,molinari2021iterative}, Alternating Direction Method of Multipliers \cite{boyd_admm}, Bregman iterative regularization \cite{yin_osher_goldfarb_darbon} and Semismooth Newton Augmented Lagragian Methods \cite{li_sun_toh} and Iteratively Reweighted Least Squares (IRLS), the latter of which is in the focus of this paper. 
 
Iteratively Reweighted Least Squares corresponds to a family of algorithms that minimizes non-smooth objective functions by solving a sequence of quadratic problems, with its idea going back to a method proposed by Weiszfeld for the \emph{Fermat-Weber problem} \cite{weiszfeld,back_sabach}.  
A variety of different problems such as robust regression in statistics \cite{holland_welsch,bhaskar_govind_prateek_purushottam}, total variation regularization in image processing \cite{GemanReynolds92,NikolavaNg05,allain_idier_goussard}, joint learning of neural networks \cite{zhang2019joint}, robust subspace recovery \cite{lerman2018fast} and the recovery of low-rank matrices \cite{mohan_fazel,fornasier_rauhut_ward,kummerle_sigl,KM21} can be solved efficiently by IRLS in practice, as it relies on simple linear algebra to solve the linear systems arising from the quadratic problems at each iteration, without the need of a careful initialization or intricate parameter tuning. On the other hand, the analysis of IRLS methods is typically challenging: General convergence results are often weak, and stronger convergence results are only available in particular cases; see \Cref{sec:related:work} for more details.

\textbf{IRLS for sparse recovery.} In the sparse recovery context, the first variants of IRLS were introduced in \cite{rao_kreutz-delgado,gorodnitsky_rao} for the $\ell_p$-quasinorm minimization problem $(P_{p})$ with $0 <p \leq 1$ that is similar to \eqref{basis_pursuit}, but with $\|x\|_p$ instead of $\|x\|_1$ as an objective.
In \cite{chartrand_yin}, modifications of the method of \cite{rao_kreutz-delgado,gorodnitsky_rao} using specific smoothing parameter update rules were observed to exhibit excellent numerical performance for solving $(P_{p})$, retrieving the underlying sparse vector when most of the methods fail. A useful fact is that IRLS is one of the few methods (ADMM being the other one \cite{boyd_admm}) that provides a framework to solve both constrained and unconstrained formulations of $\ell_p$-minimization problems. 
 
A major step forward in the theoretical understanding of IRLS was achieved in \cite{Daubechies10}, where the authors showed that a variant of IRLS for \eqref{basis_pursuit} converges globally to an $\ell_1$-minimizer if  the measurement operator $A$ fulfills the null space property of sufficient order, which essentially ensures that an $\ell_1$-minimizer is actually sparse. However, since this proof relies on the existence of a convergent subsequence, their proof does not reveal any rate for \emph{global} convergence. The analysis of \cite{Daubechies10} provides, furthermore, a \emph{locally} linear convergence rate, but this local linear rate has the drawback that it only applies \emph{if the support of the true signal has been discovered}, which is arguably the difficult part of $\ell_0$-minimization---cf. \Cref{local_thm_daubechies} below and \Cref{section:numerical:localglobal}.

While several extensions and modifications of the IRLS algorithm in \cite{Daubechies10} have been proposed (see, e.g., \cite{AravkinBurkeHe19,fornasier_peter_rauhut_worm}), this following fundamental algorithmic question has remained unanswered:

\centering 
\textit{What is the global convergence rate of the IRLS algorithm for $\ell_1$-minimization?}
\vspace{-.2cm}
\justify
\textbf{Our contribution.} We resolve this question, formally stated in \cite{straszak2021iteratively}, and present a new IRLS algorithm that \emph{converges linearly} to a sparse ground truth, \emph{starting from any initialization}, as stated in \Cref{mainresult:sparse}. Our algorithm returns a feasible solution with $\delta$-accuracy, i.e., $ \onenorm{ x_{*} -x^k }\le \delta$, where $ x_{*}$ is the underlying $s$-sparse vector, in $k=O(N \sqrt{(\log N)/m} \log (1/\delta))$ iterations. Analogous to \cite{Daubechies10}, it is assumed that the measurement matrix $A$ satisfies the so-called null space property \cite{cohen_dahmen_devore}. We also provide a similar result for approximately sparse vectors. Our proof relies on a novel quantification of the descent of a carefully chosen objective function in the direction of the ground truth. Additionally, we support the theoretical claims by numerical simulations indicating that we capture the correct dimension dependence.
We believe that the new analysis techniques in this paper are of independent interest and will pave the way for establishing global convergence rates for other variants of IRLS such as in low-rank matrix recovery \cite{fornasier_rauhut_ward}.

\textbf{Notation.} We denote the cardinality of a set $I$ by $|I|$ and the support of a vector $x \in \mathbb{R}^N$, i.e., the index set of its nonzero entries, by $ \text{supp}(x)=\{j \in [N] : x_j \neq 0\}$. We call a vector $s$-sparse if at most $s$ of its entries are nonzero. We denote by $x_I$ the restriction of $x$ onto the coordinates indexed by $I$, and use the notation $I^c := [N] \setminus I $ to denote the complement of a set $I$. Furthermore, $\sigma_s(x)_{\ell_1}$ denotes the $\ell_1$\emph{-error of  the best $s$-term approximation} of a vector $x \in \mathbb{R}^N$, i.e., $\sigma_s(x)_{\ell_1}=\inf \{\Vert x-z \Vert_1: \ z \in \mathbb{R}^N \ \text{is $s$-sparse} \}$.
 \vspace{-.4cm}
\section{IRLS for sparse recovery}\label{section:background}
We now present a simple derivation of the Iteratively Reweighted Least Squares (IRLS) algorithm for $\ell_1$-minimization which is studied in this paper. IRLS algorithms can be interpreted as a variant of a Majorize-Minimize (MM) algorithm \cite{sun_babu_palomar}, as we will lay out in the following. It mitigates the non-smoothness of the $\onenorm{\cdot}$-norm by using the smoothed objective function $\mathcal{J}_\varepsilon: \R^N \to \R$, which is defined, for a given $\varepsilon > 0$, by
\begin{equation} \label{eq:smoothedell1:objective}
\mathcal{J}_{\varepsilon}(x) := \sum_{i=1}^N j_{\varepsilon}(x_i) \quad \text{ with } \quad j_{\varepsilon}(x) := \begin{cases}
 	|x|, & \text{ if } |x| > \varepsilon, \\
 	\frac{1}{2}\left(\frac{x^2}{\varepsilon} + \varepsilon \right), & \text{ if } |x| \leq \varepsilon.
 \end{cases}
\end{equation}
The function $\mathcal{J}_{\varepsilon}$ can be considered as a scaled Huber loss function which is widely used in robust regression analysis \cite{Huber64,meyer2021alternative}. 
Moreover, the function $ \mathcal{J}_{\varepsilon} $ is continuously differentiable and fulfills $|x|\leq j_{\varepsilon}(x) \leq |x|+\varepsilon$ for each $x \in \R$. Instead of minimizing the function $\mathcal{J}_{\varepsilon}$ directly, the idea of IRLS is to minimize instead a suitable chosen quadratic function $Q_{\varepsilon}(\cdot,x)$, which majorizes $\mathcal{J}_{\varepsilon}$ such that  $Q_{\varepsilon} \left(z,x\right) \ge \mathcal{J}_{\varepsilon} \left(z\right) $ for all $z \in \mathbb{R}^N$. This function is furthermore chosen such that $Q_{\varepsilon} \left(x,x\right) = \mathcal{J}_{\varepsilon} \left( x\right) $ holds, which implies that $ \underset{z \in \mathbb{R}^n}{\min} \ Q_{\varepsilon} \left( z,x \right) \le \mathcal{J}_{\varepsilon} \left(x\right)$. The latter inequality implies that by minimizing $Q_{\varepsilon}(\cdot,x)$, IRLS actually achieves an improvement in the value of $\mathcal{J}_{\varepsilon}$ as well. More specifically, $Q_{\varepsilon}\left(\cdot, x \right)$ is defined by
\begin{equation} \label{eq:smoothedell1:IRLSmajorizer}
\begin{split}
Q_{\varepsilon}(z,x) &:= \mathcal{J}_{\varepsilon}(x)  + \langle \nabla \mathcal{J}_{\varepsilon}(x), z-x\rangle + \frac{1}{2} \langle (z-x), \diag(w_{\varepsilon}(x)) (z-x) \rangle \\
 &= \mathcal{J}_{\varepsilon}(x)  +\frac{1}{2} \langle z, \diag(w_{\varepsilon}(x)) z \rangle - \frac{1}{2} \langle x, \diag(w_{\varepsilon}(x)) x \rangle,
\end{split}
\end{equation}
where $\nabla \mathcal{J}_{\varepsilon}(x) = \left(\begin{cases}
	\frac{x_i}{|x_i|}, & \text{ if } |x_i| > \varepsilon \\
	\frac{x_i}{\varepsilon}, & \text{ if } |x_i| \leq \varepsilon
	\end{cases}\right)_{i=1}^{N}$
is the gradient of $\mathcal{J}_{\varepsilon}$ at $x$ and the weight vector $ w_{\varepsilon} \left(x\right) \in \R^N$ is a vector of \emph{weights} such that $ w_{\varepsilon}(x)_i := [ \max(|x_i|,\varepsilon)]^{-1} \quad  \text{for } i \in [N]$.
The following lemma shows that $Q_{\varepsilon} \left(\cdot, \cdot \right) $ has indeed the above-mentioned properties. We refer to the supplementary material for a proof.
\begin{lemma} \label{lemma:IRLS:basicproperties}
	Let $\varepsilon >0$, let  $\mathcal{J}_{\varepsilon}: \R^N  \to \R$ be defined as in \eqref{eq:smoothedell1:objective} and $Q_{\varepsilon}: \R^N \times \R^N \to \R$ as defined in \eqref{eq:smoothedell1:IRLSmajorizer}. Then, for any  $z,x \in \R^N$, the following affirmations hold:

\begin{itemize}
\begin{minipage}{0.36\linewidth}
    \item[i.] $\diag(w_{\varepsilon}(x) ) x = \nabla \mathcal{J}_{\varepsilon}(x) $, 
\end{minipage}
\begin{minipage}{0.32\linewidth}
   \item[ii.] $ Q_{\varepsilon}(x,x) = \mathcal{J}_{\varepsilon}(x)$,
\end{minipage}
\begin{minipage}{0.3\linewidth}
    \item[iii.] $Q_{\varepsilon}(z,x) \geq \mathcal{J}_{\varepsilon}(z)$.
\end{minipage}
\end{itemize}

\end{lemma}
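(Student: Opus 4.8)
The plan is to prove the three items by reducing everything to a coordinatewise (scalar) computation, since both $\mathcal{J}_\varepsilon$ and $Q_\varepsilon(\cdot,x)$ are separable sums over the $N$ coordinates. Items (i) and (ii) are immediate. For (i), I would check $\diag(w_\varepsilon(x))x = \nabla\mathcal{J}_\varepsilon(x)$ entrywise: the $i$-th entry of the left-hand side is $x_i/\max(|x_i|,\varepsilon)$, which equals $x_i/|x_i|$ when $|x_i|>\varepsilon$ and $x_i/\varepsilon$ when $|x_i|\le\varepsilon$, matching the two branches of the stated gradient by definition. For (ii), I would simply set $z=x$ in the first line of \eqref{eq:smoothedell1:IRLSmajorizer}: the linear term $\langle\nabla\mathcal{J}_\varepsilon(x),z-x\rangle$ and the quadratic correction both vanish, leaving $Q_\varepsilon(x,x)=\mathcal{J}_\varepsilon(x)$.

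The substance is item (iii). Here I would first rewrite the second line of \eqref{eq:smoothedell1:IRLSmajorizer} coordinatewise. Writing $W_i := \max(|x_i|,\varepsilon)$, so that $w_\varepsilon(x)_i = 1/W_i$, the $i$-th summand of $Q_\varepsilon(z,x)$ equals
\[
 j_\varepsilon(x_i) + \frac{z_i^2}{2W_i} - \frac{x_i^2}{2W_i}.
\]
A short case distinction (using $W_i=|x_i|$ when $|x_i|>\varepsilon$ and $W_i=\varepsilon$ otherwise) shows that in \emph{both} regimes $j_\varepsilon(x_i) - x_i^2/(2W_i) = W_i/2$, so this summand collapses to the clean expression $\tfrac12\bigl(z_i^2/W_i + W_i\bigr)$. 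Thus $Q_\varepsilon(z,x)=\sum_{i=1}^N \tfrac12\bigl(z_i^2/W_i+W_i\bigr)$ with each $W_i\ge\varepsilon$, and by separability it suffices to prove the scalar inequality
\[
 \tfrac12\Bigl(\frac{s^2}{w}+w\Bigr)\ \ge\ j_\varepsilon(s)\qquad\text{for all }s\in\R,\ w\ge\varepsilon,
\]
and then sum over $i$ with $s=z_i$, $w=W_i$ to obtain $Q_\varepsilon(z,x)\ge\mathcal{J}_\varepsilon(z)$.

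To prove this scalar inequality I would split on the magnitude of $s$. When $|s|>\varepsilon$ we have $j_\varepsilon(s)=|s|$, and the AM--GM inequality $\tfrac12(s^2/w+w)\ge |s|$ finishes this regime for any $w>0$. The delicate case is $|s|\le\varepsilon$, where $j_\varepsilon(s)=\tfrac12(s^2/\varepsilon+\varepsilon)$ and the crude AM--GM bound $\ge|s|$ is \emph{not} sufficient, since $j_\varepsilon(s)\ge|s|$ there. Instead I would invoke monotonicity: the map $a\mapsto s^2/a+a$ has derivative $1-s^2/a^2\ge0$ for $a\ge|s|$, hence is nondecreasing on $[|s|,\infty)$; since $w\ge\varepsilon\ge|s|$, comparing the values at $a=w$ and $a=\varepsilon$ gives $s^2/w+w\ge s^2/\varepsilon+\varepsilon$, which is exactly the claim. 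I expect this Huber (quadratic) regime to be the only genuine obstacle: it is the place where one must exploit the precise form of the smoothing together with the constraint $w\ge\varepsilon$, rather than a generic tangent/convexity argument, to get the sharp comparison.
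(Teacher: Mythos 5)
Your proof is correct and follows essentially the same route as the paper's: both reduce to a coordinatewise comparison, collapse the $i$-th summand of $Q_\varepsilon(z,x)$ to $\tfrac12\bigl(z_i^2/W_i + W_i\bigr)$ (the paper writes this same expression split over the index sets $I$ and $I^c$ rather than unified via $W_i \ge \varepsilon$), and finish by case analysis on $|z_i|$ with AM--GM. The only difference is in packaging the Huber-regime case $|z_i|\le\varepsilon$: where you invoke monotonicity of $a \mapsto s^2/a + a$ on $[|s|,\infty)$, the paper substitutes $z_i^2 \le \varepsilon^2$ into the negative term and applies AM--GM once more — the same estimate, proved by a different two-line argument.
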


As can be seen from the equality in \eqref{eq:smoothedell1:IRLSmajorizer}, minimizing of $Q_{\varepsilon}(\cdot,x)$ corresponds to a minimizing \emph{\mbox{(re-)weighted} least squares objective} $\langle \cdot, \diag(w_{\varepsilon}(x)) \cdot \rangle$, which lends its name to the method. Note that unlike a classical MM approach, however, IRLS comes with an \emph{update} step \emph{of the smoothing parameter} $\varepsilon$ at each iteration. We provide an outline of the method in \Cref{alg:algo1}. 

\begin{algorithm}
\caption{Iteratively Reweighted Least Squares for $\ell_1$-minimization}
\label{alg:algo1}
\begin{algorithmic}
\STATE{\textbf{Input:} Measurement matrix $A \in \mathbb{R}^{m \times N}$, data vector $y \in \mathbb{R}^m$,  \\ initial weight vector $w_0 \in \R^N$ (default: $w_0 = (1,1,\ldots,1)$).}
\STATE{Set $\varepsilon_0 = \infty$.}
\FOR{$k = 0, 1, 2,\ldots$}
\STATE{ \vspace*{-7mm}	\begin{align}
 			 x^{k+1} &:= \argmin_{z \in \R^N}  \  \langle z, \diag \left(w_{k}\right)  z  \rangle \; \text{ subject to } \; A z = y, \label{step_1} \\ 
 		   	\varepsilon_{k+1}&: =  \min\left(\varepsilon_k , \frac{ \sigma_s(x^{k+1})_{\ell_1}}{N} \right), \label{step_2} \\
 		   	\left( w_{k+1} \right)_i &:=  \frac{1}{ \max \left( \vert x_i^{k+1}  \vert , \varepsilon_{k+1}   \right) }  \quad \quad \text{ for each } i \in [N],\label{eq:IRLS:step3}%
 	\end{align}  \vspace*{-7mm}	}
\ENDFOR
\RETURN Sequence $(x^k)_{k\geq 1} $.
\end{algorithmic}
\end{algorithm}

The weighted least squares update \eqref{step_1} can be computed such that $x^{k+1} = W_k^{-1} A^* (A W_k^{-1} A^*)^{-1} (y)$ with $W_k = \diag \left(w_{k}\right)$, with the solution of the $(m \times m)$ linear system $(A W_k^{-1} A^*) z =  y$ as a main computational step. This linear system is positive definite and suitable for the use of iterative solvers. In \cite{fornasier_peter_rauhut_worm}, an analysis of how accurately the linear system of a similar IRLS method needs to be solved to ensure overall convergence. We note that for small $\varepsilon_k$, the Sherman-Woodbury formula \cite{Woodbury50} can be used so that the calculation of $x^{k+1}$ boils down to solving a smaller linear system that is well-conditioned, c.f. the supplementary material for details. This numerically advantageous property is not shared by the methods of \cite{Daubechies10,fornasier_peter_rauhut_worm,AravkinBurkeHe19}, as our smoothing update \eqref{step_2} is slightly different from the ones proposed in these papers. We refer to \Cref{sec:existing_theory} for a discussion.

The update step of the smoothing parameter $\varepsilon$ \eqref{step_2} for the IRLS algorithm under consideration requires an a priori estimate of the sparsity of the ground truth of the signal, a piece of information that is also needed by most of the methods for sparse reconstruction. In practice, an overestimation of $s$ is not a problem for similar numerical results if the overestimation remains within small multiples of the sparsity of the signal. We note, however, that there are also versions of IRLS which do not require a-priori knowledge of $s$, e.g. \cite{voronin_daubechies,fornasier_peter_rauhut_worm}, as the update rule for the smoothing parameter is chosen differently. An interesting future research direction is to extend the analysis presented here to IRLS with such a smoothing parameter update.

A consequence of \Cref{lemma:IRLS:basicproperties}, step \eqref{step_2}, the fact that $\varepsilon \mapsto \mathcal{J}_{\varepsilon}(z)$ is monotonously non-decreasing, and that $l \mapsto \varepsilon_k$ is non-increasing is that $k\mapsto \mathcal{J}_{\varepsilon}(z)$ is non-increasing in k. This implies that the iterates $x^{k},x^{k+1}$ of \Cref{alg:algo1} fulfill
\begin{equation} \label{eq:J:monotonicity:1}
 \mathcal{J}_{\varepsilon_{k+1}}(x^{k+1}) \leq \mathcal{J}_{\varepsilon_{k}}(x^{k+1}) \leq Q_{\varepsilon_k}(x^{k+1},x^k) \leq Q_{\varepsilon_k}(x^{k},x^k) = \mathcal{J}_{\varepsilon_{k}}(x^{k}).
\end{equation}
This shows in particular that the sequence $ \left\{ \mathcal{J}_{\varepsilon_k} \left(x^k\right) \right\}_{k=0}^{\infty} $ is non-increasing. For this reason, it can be shown that each accumulation point of the sequence of iterates $(x^k)_{k\geq 0}$ is a (first-order) stationary point of the smoothed $\ell_1$-objective $J_{\overline{\varepsilon}}(\cdot)$ subject to the measurement constraint imposed by $A$ and $y$, where $\overline{\varepsilon} = \lim_{k \to \infty} \varepsilon_k$  (see \cite[Theorem 5.3]{Daubechies10}).

\subsection{Null space property}
As in \cite{Daubechies10}, the analysis we present is based on the assumption that the measurement matrix $A$ satisfies the so-called null space property \cite{cohen_dahmen_devore,gribonval_nielsen}, which is a key concept in the compressed sensing literature (see, e.g., \cite[Chapter 4]{FoucartRauhut13} for an overview).
\begin{definition} \label{def:NSP:statement}
	\label{def_NSP} A matrix $A \in \mathbb{R}^{m \times N}$ is said to satisfy the \emph{$\ell_1$-null space property ($\ell_1$-NSP)} of order $s \in \N$ with constant $0 < \rho_s < 1$ if for any set $S \subset [N]$ of cardinality $|S| \leq s$, it holds that $ \onenorm{v_S} \leq \rho_s  \onenorm{v_{S^c}}$, for all  $ v \in \ker(A)$.
\end{definition}	

In \cite[Chapter 4]{FoucartRauhut13}, the property of \Cref{def:NSP:statement} was called \emph{stable} null space property.
The importance of the null space property is due to the fact that it gives a necessary and sufficient criterion for the success of basis pursuit for sparse recovery, as the following theorem shows.
\begin{theorem}[{\cite[Theorem 4.5]{FoucartRauhut13}}] 
	Given a matrix $A \in \mathbb{R}^{m \times N}$, every vector $ x\in \mathbb{R}^N$ such that $||x||_0\leq s$ is the unique solution of \eqref{basis_pursuit} with $Ax=y$ if and only if $A$ satisfies the null space property of order $s$ for some $0 < \rho_s < 1$.
\end{theorem}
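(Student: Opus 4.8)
The plan is to prove the two implications of the equivalence separately, keeping in mind that the null space property of \Cref{def:NSP:statement} is the quantitative (stable) version carrying a constant $\rho_s < 1$, so the necessity direction will have to produce such a uniform constant, not merely a strict inequality.

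For the sufficiency direction (NSP implies unique sparse recovery), I would fix an $s$-sparse vector $x$ with support $S$, $|S|\le s$, put $y = Ax$, and take an arbitrary feasible competitor $z$ with $Az = y$ and $z \neq x$. Writing $v := z - x \in \ker(A)\setminus\{0\}$ and splitting the $\ell_1$-norm across $S$ and $S^c$, the heart of the argument is the chain of inequalities
\begin{equation*}
\onenorm{z} = \onenorm{x_S + v_S} + \onenorm{v_{S^c}} \ge \onenorm{x_S} - \onenorm{v_S} + \onenorm{v_{S^c}} \ge \onenorm{x} + (1-\rho_s)\onenorm{v_{S^c}},
\end{equation*}
where the first inequality is the reverse triangle inequality (using $x_{S^c}=0$, which also gives $\onenorm{x}=\onenorm{x_S}$) and the second applies the NSP bound $\onenorm{v_S}\le \rho_s\onenorm{v_{S^c}}$. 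It then remains to rule out $\onenorm{v_{S^c}}=0$: if it vanished, the NSP would force $\onenorm{v_S}=0$ as well, contradicting $v\neq 0$. Hence $\onenorm{z} > \onenorm{x}$, establishing that $x$ is the unique $\ell_1$-minimizer.

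For the necessity direction I would first extract a strict, pointwise inequality and then upgrade it. Given $v \in \ker(A)\setminus\{0\}$ and $|S|\le s$, the relation $Av=0$ yields $Av_S = A(-v_{S^c})$, so the $s$-sparse vector $x := v_S$ and the vector $z := -v_{S^c}$ are both feasible for $(P_1)$ with $y = Av_S$. Since $x$ and $z$ have disjoint supports and $v\neq 0$, they are distinct; the assumed uniqueness of the $s$-sparse minimizer therefore gives $\onenorm{v_S} = \onenorm{x} < \onenorm{z} = \onenorm{v_{S^c}}$, i.e. the strict null space property holds for every nonzero kernel element.

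The main obstacle is converting this strict \emph{pointwise} inequality into a single constant $\rho_s < 1$ valid for all $v$ and all $S$ simultaneously, since a supremum of quantities each strictly below $1$ need not itself be below $1$. I would resolve this by compactness. It suffices to treat sets with $|S|=s$, because enlarging $S$ only increases $\onenorm{v_S}$ and decreases $\onenorm{v_{S^c}}$, so the bound for full-size supports implies it for smaller ones. For each of the finitely many such $S$, the map $v \mapsto \onenorm{v_S}/\onenorm{v_{S^c}}$ is continuous on the compact set $\{v \in \ker(A) : \onenorm{v}=1\}$, with nonvanishing denominator thanks to the strict inequality just proved. Each such function attains its maximum, which is strictly below $1$; taking the largest of these finitely many maxima produces the desired $\rho_s < 1$ and completes the proof.
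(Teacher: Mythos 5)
Your proof is correct. Note that the paper itself gives no proof of this statement---it is quoted directly from \cite{FoucartRauhut13}, and your two-direction argument (reverse triangle inequality plus NSP for sufficiency; splitting a kernel vector $v = v_S + v_{S^c}$ into two feasible points with disjoint supports for necessity) is exactly the standard argument behind that cited theorem. The one place where you go beyond the textbook statement is genuinely needed and handled correctly: Foucart--Rauhut's Theorem 4.5 is phrased for the non-quantitative NSP ($\onenorm{v_S} < \onenorm{v_{S^c}}$ for all nonzero kernel vectors), whereas the paper's \Cref{def:NSP:statement} is the \emph{stable} NSP with a uniform constant $\rho_s < 1$, so your compactness step---maximizing the continuous ratio $v \mapsto \onenorm{v_S}/\onenorm{v_{S^c}}$ over the compact set $\{v \in \ker(A) : \onenorm{v} = 1\}$ and over the finitely many supports of size $s$---is precisely the bridge required to make the paper's phrasing of the equivalence true as stated. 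The only cosmetic omission is the degenerate case $\ker(A) = \{0\}$, where the compact set is empty but the stable NSP holds vacuously for any $\rho_s \in (0,1)$.
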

The $\ell_1$-NSP is implied by the restricted isometry property (see, e.g., \cite{cahil_chen_wang}), which is fulfilled by a large class of random matrices with high probability. For example, this includes matrices with (sub-)gaussian entries and random partial Fourier matrices \cite{rudelson_vershynin, baraniuk_davenport_devore_wakin}. 

\subsection{Existing theory} \label{sec:existing_theory}

A predecessor of IRLS for the sparse recovery problem \eqref{basis_pursuit}, and more generally, for $\ell_p$-quasinorm minimization with $0 < p \leq 1$, is the \emph{FOCal Underdetermined System Solver} (FOCUSS) as proposed by Gorodnitsky, Rao and Kreutz-Delgado \cite{gorodnitsky_rao, rao_kreutz-delgado}. Asymptotic convergence of FOCUSS to a stationary point from any initialization was claimed in \cite{rao_kreutz-delgado}, but the proof was not entirely accurate, as pointed out by \cite{Chartrand2016}.
One limitation of FOCUSS is that, unlike in IRLS as presented in \Cref{alg:algo1}, no smoothing parameter $\varepsilon$ is used, which leads to ill-conditioned linear systems. 

To mitigate this, \cite{chartrand_yin} proposed an IRLS method that uses smoothing parameters $\varepsilon$ (such as used in $Q_{\varepsilon}$ defined above) that are updated iteratively.
It was observed that this leads to a better condition number for the linear systems to be solved in each step of IRLS and, furthermore, that this smoothing strategy has the advantage of finding sparser vectors if the weights of IRLS are chosen to minimize a non-convex $\ell_p$-quasinorm for $p<1$.

Further progress for IRLS designed to minimize an $\ell_1$-norm was achieved in the seminal paper \cite{Daubechies10}. In \cite{Daubechies10}, it was shown that if the measurement operator fulfills a suitable $\ell_1$-null space property as in \Cref{def_NSP}, an IRLS method with iteratively updated smoothing converges to an $s$-sparse solution, coinciding with the $\ell_1$-minimizer, if there exists one that is compatible with the measurements. This method uses not exactly the update rule of \eqref{step_2}, but rather updates the smoothing parameter such that $\varepsilon_{k+1}=\min (\varepsilon_k,R(x^{k+1})_{s+1}/N)$, where $R(x^{k+1})_{s+1}$ is the $(s+1)\textsuperscript{st}$-largest element of the set $\{ |x_j^{k+1}|, j \in [N]\}$. Furthermore, a \emph{local linear convergence rate} of IRLS was established \cite[Theorem 6.1]{Daubechies10} under same conditions.

However, the analysis of \cite{Daubechies10} has its limitations: First, there is a gap in the assumption of their convergence results between the sparsity $s$ of a vector to be recovered and the order $\widehat{s}$ of the NSP of the measurement operator. 
Recently, this gap was circumvented in \cite{AravkinBurkeHe19} with an IRLS algorithm that uses a smoothing update rule based on an $\ell_1$-norm, namely, $\varepsilon_{k+1}=\min (\varepsilon_k,\eta (1-\rho_s)\sigma_{s}(x^{k+1})_{\ell_1}/N)$, where $\eta \in (0,1)$, and $\rho_s$ is the NSP constant of the order $s$ of the NSP fulfilled by the measurement matrix $A$---this rule is quite similar to the rule \eqref{step_2} that we use in \Cref{alg:algo1}. In particular, \cite[Theorem III.6]{AravkinBurkeHe19} establishes convergence with local linear rate similar to \cite{Daubechies10} without the gap mentioned above. The main limitation, however, of the theory of \cite{Daubechies10} (which is shared by \cite{AravkinBurkeHe19}) is that the linear convergence rate only holds \emph{locally}, i.e., in a situation where the support of the sparse vector has already been identified, see also \Cref{sec:main:results} and \Cref{section:numerical:localglobal} for a discussion.

We finally mention three relevant papers for the theoretical understanding of IRLS. \cite{ba2013convergence} established the correspondence between the IRLS algorithms and the Expectation-Maximization algorithm for constrained maximum likelihood estimation under a Gaussian scale mixture distribution. By doing so, they established similar results as those from \cite{Daubechies10}, i.e., the global convergence of IRLS with local linear convergence rate (as can be seen from their equation (38), which similar to \eqref{eq:DDFG:closeness:assumption} below) but by using different techniques based on such correspondence. \cite{straszak2021iteratively} explores the relationship of IRLS for $\ell_1$-minimization and a slime mold dynamics, interpreting both as an instance of the same meta-algorithm. Without requiring any connection between sparse recovery and $\ell_1$-minimization, \cite{ene2019improved} shows that an IRLS-like algorithm for \eqref{basis_pursuit}, requires $O(N^{1/3} \log(1/\delta)/\delta^{2/3}+ \log(N)/\delta^2)$ iterations to obtain a multiplicative error of $1+\delta$ on the minimizer $||x||_1$. Unlike our result  \Cref{mainresult:sparse}, this corresponds not to a linear, but to a sublinear convergence rate.

\subsection{Related work} \label{sec:related:work}

As mentioned in the introduction, IRLS has a long history and has appeared under different names within different communities, e.g., similar algorithms are usually called \emph{half-quadratic algorithms} in image processing \cite{idier,allain_idier_goussard} and the \emph{Ka\u{c}anov method} in numerical PDEs \cite{diening_fornasier_tomasi_wank}. Probably the most common usage of IRLS has been in robust regression \cite{holland_welsch,green1984iteratively}, c.f. \cite{burrus} for a survey that also covers applications in approximation theory. For $p$-norm regression, \cite{adil_peng_sachdeva} proposed a version of IRLS for which convergence results for $ p \in [2,\infty)$ were established, solving a problem that was open for over thirty years. Also, for robust regression, by using an $\ell_1$-objective on the residual, \cite{bhaskar_govind_prateek_purushottam} showed recently global convergence of IRLS with a linear rate, with high probability for sub-Gaussian data. We note that our proof strategy is different from the one of \cite{bhaskar_govind_prateek_purushottam} due to a structural difference of \eqref{basis_pursuit} from robust regression.
 
In \cite{ochs_dosovitskiy_brox_pock}, the authors provide a general framework for formulating IRLS algorithms for the optimization of a quite general class of non-convex and non-smooth functions, however, without updated smoothing. They use techniques developed in \cite{attouch_bolte_svaiter} to show convergence of the sequence of iterates to a critical point under the Kurdyka-\L{}ojasiewicz property \cite{Bolte2007lojasiewicz}. However, no results about convergence rates were presented.

For the sparse recovery problem, the topic discussed in this paper, the references \cite{lai_yangyang_wotao, fornasier_peter_rauhut_worm, voronin_daubechies} analyzed IRLS for an unconstrained version of \eqref{basis_pursuit}, which is usually a preferable formulation if the measurements are corrupted by noise. Additionally, the work \cite{fornasier_peter_rauhut_worm} addressed the question of how to solve the successive quadratic optimization problems. The authors developed a theory that shows, under the NSP, how accurately the quadratic subproblems need to be solved via the conjugate gradient method in order to preserve the convergence results established in \cite{Daubechies10}.

Finally, for the related problems of low-rank matrix recovery and completion, IRLS strategies have emerged as one of the most successful methods in terms of data-efficiency and scalability \cite{fornasier_rauhut_ward,mohan_fazel,kummerle_sigl,KM21}. 

While we were writing this paper, the manuscript \cite{2021-Poon-noncvxpro} appeared providing new insights about IRLS. It describes a surprisingly simple reparametrization of the IRLS formulation for $\ell_p$-minimization (with $p \in (2/3,1)$) that leads to a smooth bilevel optimization problem without any spurious minima, i.e., the stationary points of this new formulation are either global minima or strict saddles. It is an interesting future direction to explore the connection between this new approach and our global convergence theory.

\section{IRLS for Basis Pursuit with Global Linear Rate} \label{sec:main:results}

As discussed in \Cref{sec:existing_theory}, the main theoretical advancements for IRLS for the sparse recovery problem were achieved in the work \cite{Daubechies10}. 

\begin{proposition}\cite[Theorem 6.1]{Daubechies10}\label{local_thm_daubechies}
Assume that $A \in \mathbb{R}^{m \times N}$ satisfies the NSP of order $\widehat{s} >s$ with constant $\rho_{\widehat{s}}$ such that $ 0< \rho_{\widehat{s}} < 1- \frac{2}{\widehat{s}+2}$ and $\hat{s} > s + \frac{2\rho_{\widehat{s}} }{1- \rho_{\widehat{s}}} $ hold. Let $x_* \in \mathbb{R}^N$ be an $s$-sparse vector and set $y=Ax_*$.
Assume that there exists an integer $k_0 \ge 1$ and a positive number $\xi > 0$ such that
\begin{equation} \label{eq:DDFG:closeness:assumption}
\xi:= \frac{\|x^{k_0} - x_{*}\|_1}{\min_{ i \in S} |(x_*)_{i}|} < 1.
\end{equation}
 Then the iterates $\{x^{k_0},x^{k_0+1},x^{k_0+2},\ldots\}$ of the IRLS method in \cite{Daubechies10} converge \emph{linearly} to $x_*$, i.e., for all $k\geq k_0$, the $k$th iteration of IRLS satisfies
\begin{equation} \label{eq:DDFG:localrate}
\Vert x^{k+1}-x_*\Vert_1 \leq \frac{\rho_{\widehat{s}}(1+\rho_{\widehat{s}})}{1-\xi}\left( 1+ \frac{1}{\widehat{s}-1-s}\right) \Vert x^{k}-x_*\Vert_1.
\end{equation}
\end{proposition}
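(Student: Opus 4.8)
The plan is to exploit the variational characterisation of the weighted least squares step and combine it with the null space property. The starting point is the first-order optimality condition for the minimiser $x^{k+1}$ of $\langle z, \diag(w_k) z\rangle$ over the affine space $\{z : Az = y\}$: since the feasible directions are exactly $\ker(A)$, one has $\langle \diag(w_k) x^{k+1}, v\rangle = 0$ for every $v \in \ker(A)$. Because both $x^{k+1}$ and the $s$-sparse ground truth $x_*$ satisfy $Az = y$, the error $\eta := x^{k+1} - x_*$ lies in $\ker(A)$, so I may take $v = \eta$. Writing $x^{k+1} = x_* + \eta$ and using $\supp(x_*) = S$ gives the fundamental identity
\[
\sum_{i=1}^N w_k(i)\,\eta_i^2 = -\sum_{i \in S} w_k(i)\, (x_*)_i\, \eta_i,
\]
which trades the weighted energy of the error against a term supported only on $S$. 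The whole argument is an estimate of the two sides of this identity.

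Next I would control the weights on the support. The closeness hypothesis \eqref{eq:DDFG:closeness:assumption} should be propagated by induction: assuming $\onenorm{x^k - x_*} \le \xi\, r$ with $r := \min_{i\in S}|(x_*)_i|$, then for $i \in S$ one has $|x_i^k| \ge |(x_*)_i| - \onenorm{x^k - x_*} \ge |(x_*)_i|(1-\xi)$, whence $w_k(i)\,|(x_*)_i| = |(x_*)_i|/\max(|x_i^k|,\varepsilon_k) \le (1-\xi)^{-1}$. Applying the $\ell_1$-NSP of order $\widehat s \ge s = |S|$ to $\eta \in \ker(A)$ yields $\onenorm{\eta_S} \le \rho_{\widehat s}\,\onenorm{\eta_{S^c}}$, so the right-hand side of the identity is at most $(1-\xi)^{-1}\onenorm{\eta_S} \le \rho_{\widehat s}(1-\xi)^{-1}\onenorm{\eta_{S^c}}$. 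Simultaneously, the smoothing update \eqref{step_2} gives $\varepsilon_k \le \sigma_s(x^k)_{\ell_1}/N \le \onenorm{x^k - x_*}/N$, since $x_*$ is $s$-sparse; this is what lets $\varepsilon_k$ be absorbed into the previous error on the complement $S^c$, where $w_k(i)^{-1} = \max(|\eta_i^{(k)}|,\varepsilon_k)$.

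The last and most delicate step is to convert the weighted energy on $S^c$ into the genuine $\ell_1$-norm $\onenorm{\eta_{S^c}}$ while keeping the constant sharp. A Cauchy--Schwarz split $\onenorm{\eta_{S^c}} \le \big(\sum_{i\in S^c} w_k(i)\eta_i^2\big)^{1/2}\big(\sum_{i\in S^c} w_k(i)^{-1}\big)^{1/2}$, combined with $w_k(i)^{-1} = \max(|x_i^k|,\varepsilon_k)$ and the bound on $\varepsilon_k$ above, reduces everything to a single inequality of the form $\onenorm{\eta_{S^c}} \lesssim \tfrac{\rho_{\widehat s}}{1-\xi}\,\onenorm{x^k - x_*}$, after which $\onenorm{x^{k+1}-x_*} \le (1+\rho_{\widehat s})\onenorm{\eta_{S^c}}$ closes the recursion. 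The crude version of this argument only yields a constant of shape $2\rho_{\widehat s}(1+\rho_{\widehat s})/(1-\xi)$; to recover the stated factor $(1 + \tfrac{1}{\widehat s - 1 - s})$ one must not lump all of $S^c$ together but decompose it by magnitude --- separating the largest $\widehat s - s$ off-support coordinates (so that NSP is applied on a set of the full order $\widehat s$) from the tail, and carefully counting the coordinates on which $\varepsilon_k$ dominates $|x_i^k|$.

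This refined counting is the main obstacle: it is where the hypotheses $\rho_{\widehat s} < 1 - \tfrac{2}{\widehat s + 2}$ and $\widehat s > s + \tfrac{2\rho_{\widehat s}}{1-\rho_{\widehat s}}$ must be used to certify that the resulting contraction factor is genuinely below $1$, not merely finite. Once that is established, since the factor is $<1$ the induction hypothesis $\onenorm{x^k - x_*} \le \xi r$ is preserved under one IRLS step, so the estimate \eqref{eq:DDFG:localrate} holds for every $k \ge k_0$ and the local linear convergence to $x_*$ follows. I expect the weight control and the NSP application to be routine; the sharp off-support estimate with the correct dependence on $\widehat s - s$ is the part that requires genuine care.
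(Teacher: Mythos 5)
You should first note what you are proving against: the paper itself contains \emph{no} proof of \Cref{local_thm_daubechies}---it is quoted, with citation, from \cite{Daubechies10} (Theorem 6.1 there) as background for contrasting local versus global rates. So the only meaningful comparison is with the original proof of Daubechies, DeVore, Fornasier and G\"unt\"urk, and your sketch does reproduce its skeleton faithfully: the orthogonality relation $\sum_i w_k(i)\,\eta_i^2=-\sum_{i\in S}w_k(i)(x_*)_i\eta_i$ from optimality of the weighted least squares step, the inductive propagation of \eqref{eq:DDFG:closeness:assumption} to bound the on-support weights by $(1-\xi)^{-1}$, the NSP applied to $\eta\in\ker(A)$, and the Cauchy--Schwarz conversion of the weighted off-support energy into $\|\eta_{S^c}\|_1$ are exactly the steps of the original argument.

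There is, however, a genuine gap, and it sits precisely where you flag ``the main obstacle.'' You instantiate the proof with the algorithm of \emph{this} paper (\Cref{alg:algo1}): weights $1/\max(|x_i^k|,\varepsilon_k)$ and the smoothing rule \eqref{step_2}, $\varepsilon_k\le\sigma_s(x^k)_{\ell_1}/N$. But the proposition concerns the IRLS method of \cite{Daubechies10}, whose weights are $(w_k)_i=(|x_i^k|^2+\varepsilon_k^2)^{-1/2}$ (cf.\ \eqref{eq:weights:Daubechies}) and whose smoothing update is an order-statistic rule, $\varepsilon_{k+1}=\min\bigl(\varepsilon_k,R(x^{k+1})_{j}/N\bigr)$ with $j$ tied to the NSP order $\widehat{s}$---a distinction the paper itself stresses in \Cref{sec:existing_theory}. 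This is not cosmetic. With the $\sigma_s$-based rule one only gets $N\varepsilon_k\le\sigma_s(x^k)_{\ell_1}\le\|(x^k-x_*)_{S^c}\|_1$, and the argument terminates at your ``crude'' factor $2\rho_{\widehat{s}}(1+\rho_{\widehat{s}})/(1-\xi)$; within that setup the sharper constant is simply not available. In the original proof the factor $1+\tfrac{1}{\widehat{s}-1-s}$ in \eqref{eq:DDFG:localrate} comes from the order-statistic rule plus a pigeonhole count: among the $\widehat{s}+1$ largest entries of $x^k$ at most $s$ can lie in $S$, hence $N\varepsilon_k\le R(x^k)_{\widehat{s}+1}\le\|(x^k-x_*)_{S^c}\|_1/(\widehat{s}+1-s)$, which is what replaces your factor $2$ by $1+O(1/(\widehat{s}-s))$. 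Your proposed alternative---splitting $S^c$ by magnitude and applying the NSP on a set of full order $\widehat{s}$---is not the mechanism, and it is doubtful it can work: enlarging the set on which the NSP is applied only weakens the estimate, whereas the gain must come from the smoothing rule, which your setup lacks. So the proposal is incomplete exactly at the step that produces the claimed rate, and with the algorithm you chose it cannot be completed.
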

The main contribution of this paper is that we overcome a local assumption such as \eqref{eq:DDFG:closeness:assumption} and show that IRLS as defined by \Cref{alg:algo1} \emph{exhibits a global linear convergence rate}, i.e., there is a linear convergence rate starting from any initialization, as early as in the first iteration.

\textbf{Exactly sparse case.} Our first main result, \Cref{mainresult:sparse}, deals with the scenario that the ground truth vector $x_{*}$ is exactly $s$-sparse. Our second result, presented in the supplementary material, generalizes the first one to the more realistic situation of approximately sparse vectors.

\begin{theorem}\label{mainresult:sparse}
Consider the problem of recovering an unknown $s$-sparse vector $ x_{*} \in \R^N $ from known measurements of the form $y=Ax_{*} $. Assume that the measurement matrix $A\in \mathbb{R}^{m \times N}$ fulfills the $\ell_1$-NSP of order $s$ with constant $\rho_s < 1/2$. Let the IRLS iterates  $\left\{    x^{k} \right\}_k$ and $ \left\{ \varepsilon_{k} \right\}_k$ be defined by the IRLS algorithm \eqref{step_1} and \eqref{step_2} with initialization $x^0$.
Then, for all $k \in \mathbb{N}$, it holds that
\begin{equation}\label{equ:linearconvergence1}
\mathcal{J}_{\varepsilon_{k}}(x^{k}) -  \onenorm{x_{*}} \le \left(  1-  \frac{c}{\rho_1 N}   \right)^k \left(    \mathcal{J}_{\varepsilon_{0}}(x^0) -  \onenorm{x_{*}} \right)
\end{equation}
as well as
\begin{equation}\label{ineq:globalconvergenerate1}
\onenorm{x^k-x_{*}}\le 9 \left(  1-  \frac{c}{\rho_1 N}   \right)^k  \onenorm{x^0 - x_{*}}.
\end{equation}
Here $c= 1/768$ is an absolute constant and $\rho_1 < 1/2$ denotes the $\ell_1$-NSP constant of order $1$.
\end{theorem}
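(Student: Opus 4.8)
The plan is to prove a per-iteration multiplicative decrease of the shifted energy $E_k := \mathcal{J}_{\varepsilon_k}(x^k) - \onenorm{x_*}$ and then convert this into the $\ell_1$-bound \eqref{ineq:globalconvergenerate1}. I would argue by induction, carrying as hypothesis that $E_0 \geq E_1 \geq \cdots \geq E_k$, which follows from \eqref{eq:J:monotonicity:1}. Write $v^k := x^k - x_*$, $S := \supp(x_*)$ and $W_k := \diag(w_{\varepsilon_k}(x^k))$. Since both $x^k$ and $x_*$ are feasible, $v^k \in \ker(A)$ and the whole segment $x^k + t(x_*-x^k)$, $t\in[0,1]$, is feasible; because $x^{k+1}$ minimizes $Q_{\varepsilon_k}(\cdot,x^k)$ over the feasible set, combining with \eqref{eq:J:monotonicity:1} yields, for every $t\in[0,1]$,
\[
E_{k+1} \;\leq\; E_k + t\,L_k + \tfrac{t^2}{2} H_k, \qquad L_k := \langle \nabla\mathcal{J}_{\varepsilon_k}(x^k),\, x_*-x^k\rangle, \quad H_k := \langle x_*-x^k,\, W_k (x_*-x^k)\rangle .
\]
The task then reduces to showing $L_k \leq -\gamma E_k$ for some $\gamma>0$ and $H_k \leq C\rho_1 N E_k$, after which optimizing over $t$ gives $E_{k+1}\leq (1-\tfrac{\gamma^2}{2C\rho_1 N})E_k$.

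For the linear term, convexity and differentiability of $\mathcal{J}_{\varepsilon_k}$ give $L_k \le \mathcal{J}_{\varepsilon_k}(x_*) - \mathcal{J}_{\varepsilon_k}(x^k)$. Since $x_*$ is $s$-sparse and $j_{\varepsilon_k}(x)\le |x|+\tfrac{\varepsilon_k}{2}$, one has $\mathcal{J}_{\varepsilon_k}(x_*) \le \onenorm{x_*} + \tfrac{N\varepsilon_k}{2}$, while the smoothing update \eqref{step_2} gives $N\varepsilon_k \le \sigma_s(x^k)_{\ell_1}$. The null space property then yields $\onenorm{x^k}-\onenorm{x_*} \ge (1-\rho_s)\onenorm{(v^k)_{S^c}} \ge (1-\rho_s)\sigma_s(x^k)_{\ell_1}$, hence $\sigma_s(x^k)_{\ell_1}\le E_k/(1-\rho_s)$. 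Combining these gives $L_k \le \big(\tfrac{1}{2(1-\rho_s)}-1\big)E_k = -\gamma E_k$ with $\gamma = \tfrac{1-2\rho_s}{2(1-\rho_s)}$, which is strictly positive exactly because $\rho_s < 1/2$; this is precisely where the hypothesis on $\rho_s$ enters.

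The quadratic term is the heart of the argument. Splitting $H_k$ over $S^c$ and $S$: on $S^c$ one has $x_*=0$, so $\tfrac{(x^k_i)^2}{\max(|x_i^k|,\varepsilon_k)} \le |x_i^k|$ and the off-support contribution is at most $\onenorm{(v^k)_{S^c}} \le E_k/(1-\rho_s)$. The on-support part $\sum_{i\in S}\tfrac{(v^k_i)^2}{\max(|x^k_i|,\varepsilon_k)}$ is dangerous, since the weights blow up where $|x^k_i|$ is small. Two ingredients tame it: first, the order-$1$ NSP gives the coordinatewise bound $|v^k_i|\le \rho_1\onenorm{v^k}$, which extracts the factor $\rho_1$ appearing in the rate; second, and crucially, a lower bound $\varepsilon_k \ge c_0 E_k/N$ on the smoothing parameter is needed to control the denominators. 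I would establish this lower bound through a reverse NSP estimate: writing $T$ for the support of the best $s$-term approximation of $x^k$ and applying the NSP to the set $S^c\cap T$ shows $\sigma_s(x^k)_{\ell_1} \ge (1-\rho_s)\onenorm{(v^k)_{S^c}}$, which together with the upper bounds above gives $\sigma_s(x^k)_{\ell_1}\gtrsim E_k$; since $E_j$ is non-increasing by the inductive hypothesis, $\varepsilon_k = \min_{j\le k}\sigma_s(x^j)_{\ell_1}/N \ge c_0 E_k/N$. Feeding $|v^k_i|\le\rho_1\onenorm{v^k}$, $\onenorm{(v^k)_S}\le \rho_s\onenorm{(v^k)_{S^c}}$ and $\varepsilon_k \ge c_0 E_k/N$ into the on-support sum bounds it by $C\rho_1 N E_k$, and adding the off-support part gives $H_k\le C\rho_1 N E_k$.

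I expect the lower bound $\varepsilon_k\gtrsim E_k/N$ to be the main obstacle. It cannot follow from the update \eqref{step_2} alone, since the minimum defining $\varepsilon_k$ could in principle have been set by an earlier iterate with a much smaller sparsity defect; the resolution is the two-sided comparison $\sigma_s(x^k)_{\ell_1}\asymp\onenorm{(v^k)_{S^c}}\asymp E_k$ under the NSP, which forbids a feasible iterate from being nearly $s$-sparse while remaining far from $x_*$, coupled tightly with the monotonicity of $E_k$ in the induction. Once the per-step contraction $E_{k+1}\le(1-\tfrac{c}{\rho_1 N})E_k$ is in hand, \eqref{equ:linearconvergence1} follows by iterating, and \eqref{ineq:globalconvergenerate1} follows from $\onenorm{x^k-x_*}=\onenorm{v^k}\le(1+\rho_s)\onenorm{(v^k)_{S^c}}\le \tfrac{1+\rho_s}{1-\rho_s}E_k$ together with a comparison of $E_0$ and $\onenorm{x^0-x_*}$, the constant $9$ absorbing the NSP factors. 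A secondary difficulty is that tracking the $\rho_s$-dependent constants sharply enough to obtain a clean absolute $c$ (rather than one degrading as $\rho_s\uparrow 1/2$) requires some care in the estimates above.
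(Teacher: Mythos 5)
Your skeleton is the same as the paper's (\Cref{thm:p1:linearrate}): descend along the feasible line through $x_*$, split $Q_{\varepsilon_k}(x^k+t(x_*-x^k),x^k)-\mathcal{J}_{\varepsilon_k}(x^k)$ into a linear and a quadratic part, lower-bound $\varepsilon_k$ by a multiple of $E_k/N$, optimize in $t$, and convert to the $\ell_1$-error via the NSP. Several of your steps are correct and even a bit cleaner than the paper's: your off-support quadratic bound $\sum_{i\in S^c}(v^k_i)^2/\max(|x^k_i|,\varepsilon_k)\le \onenorm{(v^k)_{S^c}}$ avoids $\varepsilon_k$ altogether, and your lower bound $\varepsilon_k\gtrsim E_k/N$ via the reverse-NSP inequality $\sigma_s(x^k)_{\ell_1}\ge(1-\rho_s)\onenorm{(v^k)_{S^c}}$ together with monotonicity of $E_k$ is a valid alternative to the paper's route through \Cref{lemma:epscontrol} (which invokes \Cref{lemma:NSPl1min}); the on-support quadratic bound and the final conversion also match the paper in substance.

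The genuine gap is the linear term, and it is not the ``secondary difficulty'' you describe at the end: it cannot be repaired by more careful bookkeeping downstream. Bounding $L_k\le \mathcal{J}_{\varepsilon_k}(x_*)-\mathcal{J}_{\varepsilon_k}(x^k)$ by plain convexity gives $\gamma=\tfrac{1-2\rho_s}{2(1-\rho_s)}$, equivalently $L_k\le -(\tfrac12-\rho_s)\onenorm{(v^k)_{S^c}}$, so your final rate constant $\gamma^2/(2C)$ tends to $0$ as $\rho_s\uparrow 1/2$; the theorem, however, asserts an \emph{absolute} constant $c=1/768$ valid on the whole range $\rho_s<1/2$. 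The loss is intrinsic to the convexity shortcut. On a coordinate $i\in S^c$ with $|x^k_i|\le\varepsilon_k$, the actual gradient contribution is $\tfrac{x^k_i}{\varepsilon_k}\bigl(0-x^k_i\bigr)=-(x^k_i)^2/\varepsilon_k$, while convexity only certifies $j_{\varepsilon_k}(0)-j_{\varepsilon_k}(x^k_i)=-(x^k_i)^2/(2\varepsilon_k)$; when the off-support entries of $x^k$ sit near the threshold $\varepsilon_k$ (which the update \eqref{step_2} makes typical), this factor-two loss is real, and no estimate applied after the inequality $L_k\le\mathcal{J}_{\varepsilon_k}(x_*)-\mathcal{J}_{\varepsilon_k}(x^k)$ can recover it, since $\mathcal{J}_{\varepsilon_k}(x_*)-\onenorm{x_*}\ge (N-s)\varepsilon_k/2$ always. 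The paper instead computes $\langle\nabla\mathcal{J}_{\varepsilon_k}(x^k),v^k\rangle$ coordinatewise (equivalently, uses the $1/\varepsilon_k$-strong convexity of $j_{\varepsilon_k}$ on $[-\varepsilon_k,\varepsilon_k]$), keeps the full term $-\twonorm{x^k_{S^c\cap I^c}}^2/\varepsilon_k$, and after the AM--GM step gets $N\varepsilon_k/4$ rather than $N\varepsilon_k/2$, hence $L_k\le -(\tfrac34-\rho_s)\onenorm{(v^k)_{S^c}}\le -\tfrac14\onenorm{(v^k)_{S^c}}$ uniformly for $\rho_s<1/2$; this is exactly what produces $c_{\rho_s}=(3/4-\rho_s)^2/48\ge 1/768$. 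To prove the theorem as stated you must replace your convexity bound on $L_k$ by this sharper per-coordinate estimate; with that substitution, the rest of your plan goes through.
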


Inequality \eqref{equ:linearconvergence1} says that the difference $\mathcal{J}_{\varepsilon_{k}}(x^{k}) -  \onenorm{x_{*}}  $ converges linearly with a uniform upper bound of $1-\frac{c}{\rho_1 N}$ on the linear convergence factor. As our proof, which is detailed in the supplementary material, shows, this implies inequality \eqref{ineq:globalconvergenerate1}, which implies that also $\onenorm{ x_{*} -x^k }$ exhibits linear convergence in the number of iterations $k$. In particular, this means that for some error tolerance $ \delta >0 $, we obtain $ \onenorm{ x_{*} -x^k }\le \delta$ after $ O\left(  \rho_1 N  \log \left(  \frac{\onenorm{x_{\ast} -x^0}}{\delta}  \right)    \right) $ iterations.

\begin{remark}
Note that it follows directly from Definition \ref{def:NSP:statement} that the constant $\rho_1$ of the $\ell_1$-NSP of order $1$ satisfies $\rho_1 \le \rho_s \le 1$, which implies that $\delta$-accuracy is obtained after $ O\left(N  \log \left(  \frac{\onenorm{x_{\ast} -x^0}}{\delta}  \right)    \right)$ iterations. This bound can be improved in many scenarios where one can obtain more explicit bounds on $\rho_1$, for example, when $A$ is a Gaussian matrix. Namely, inspecting \cite[p. 142 and Thm. 9.2]{FoucartRauhut13}, we observe in this scenario that $\rho_1 \lesssim \sqrt{(\log N)/m} $ with high probability. Hence, in this scenario, at most  $ O\left(  N \sqrt{\frac{\log N}{m}}   \log \left(  \frac{\onenorm{x_{\ast} -x^0}}{\delta}  \right) \right) $ iterations are needed to achieve $\delta$-accuracy. \end{remark}

The key idea in our proof is to use fact that the quadratic functional $Q_{\varepsilon_k}(\cdot, x^k)$ approximates the $\ell_1$-norm in a neighborhood of the current iterate $x^k $. For this reason, we also expect that for $t>0$ sufficiently small, we have that $Q_{\varepsilon_k}(x^k + t v^k, x^k) < Q_{\varepsilon_k}(x^k, x^k) $ if $v^k = x_{*} - x^k $ is the vector between $x^k$ and the ground truth $x_{*}$. Then by choosing $t$ properly, we can guarantee a sufficient decrease of the functional $\mathcal{J}_{\varepsilon_k} \left(x^k\right) $ in each iteration.

In \Cref{section:numericalexperiments}, we conduct experiments that indeed verify the linear convergence in \eqref{equ:linearconvergence1} and \eqref{ineq:globalconvergenerate1}. Moreover, we study numerically whether one can observe a dependence of the convergence rate on the problem parameters $N$, $s$ and $m$. We construct a worst-case example which indicates that the convergence rate indeed may depend on the dimension $N$ in a way as described by \eqref{equ:linearconvergence1}. In a certain sense, this indicates that there are two convergence phases, a global one, where a dimension-dependent constant cannot be avoided and a local convergence phase, where a local convergence result such as described in Proposition \ref{local_thm_daubechies} kicks in.

Finally, let us mention that we have undertaken no efforts to optimize the constant $c=1/768$ in Theorem \ref{mainresult:sparse}. Nevertheless, we note that the constant $c$ can be replaced by the sharper constant $c_{\rho_s}$ as defined in Proposition \ref{thm:p1:linearrate}.

\section{Numerical experiments}\label{section:numericalexperiments}
In this section, we support our theory with numerical experiments.
First, we examine whether IRLS indeed exhibits two distinct convergence phases, a ``global'' one, as described in this paper, and a local one, as described in \cite{Daubechies10,AravkinBurkeHe19}, corresponding to different linear convergence rate factors. Second, we explore to which extent the dimension dependence in the  convergence rates \eqref{equ:linearconvergence1} and \eqref{ineq:globalconvergenerate1} indicated by \Cref{mainresult:sparse} is necessary, or if we rather can expect a dimension-free linear convergence rate factor. All experiments are conducted on an iMac computer with a 4 GHz Quad-Core Intel Core i7 CPU, using MATLAB R2020b.

\subsection{Local and global convergence phase} \label{section:numerical:localglobal}

We first note that the local convergence result of \cite[Theorem 6.1]{Daubechies10} depends on the locality condition $\xi(k):= \frac{\|x^k - x_{*}\|_1}{\min_{ i \in S} |(x_*)_{i}|} < 1$, cf. \eqref{eq:DDFG:closeness:assumption}. Under this condition (and an appropriate null space condition), \Cref{local_thm_daubechies} stated above implies that
$
\|x^{k+1} - x_{*} \|_1 \leq \mu \|x^{k} - x_{*} \|_1
$
with an absolute constant $\mu < 1$ which, in particular, does \emph{not} depend on the dimension $N, m$, and $s$. This corresponds to a locally linear rate for IRLS. A very similar condition to \eqref{eq:DDFG:closeness:assumption} is required by the comparable and more recent local convergence statement \cite[Theorem III.6, inequality (III.14)]{AravkinBurkeHe19} for the IRLS variant considered in \cite{AravkinBurkeHe19}.

However, a closer look at the locality condition \eqref{eq:DDFG:closeness:assumption} reveal that its \emph{basin of attraction} is very restrictive: This condition means that the \emph{support identification} problem underlying the sparse recovery \emph{has already been solved}, as can be seen from the following proposition, whose proof we provide in the supplementary material.

\begin{proposition} \label{prop:DDFG:supportcondition}
Let $x^k, x_{*} \in \R^N$, let $S \subset [N]$ be the support set of $x_{*}$ of size $|S|= s$. If \eqref{eq:DDFG:closeness:assumption} holds, i.e., if $\|x^k - x_{*}\|_1 < \min_{ i \in S} |(x_*)_{i}|$, then the set $S_k \subset [N]$ of the $s$ largest coordinates of $x^k$ coincides with $S$.
\end{proposition}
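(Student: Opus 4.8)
The plan is to prove something slightly stronger than required: that under \eqref{eq:DDFG:closeness:assumption} every coordinate of $x^k$ indexed by the true support $S$ is \emph{strictly} larger in magnitude than every coordinate indexed by $S^c$. Once such a strict magnitude separation is established, the conclusion $S_k = S$ is immediate, since $|S| = s$ forces the $s$ coordinates of largest magnitude to be precisely those in $S$. Throughout I would write $m_* := \min_{i \in S} |(x_*)_i|$, so that the hypothesis reads $\onenorm{x^k - x_*} < m_*$.

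First I would control the support coordinates from below: for $i \in S$, the reverse triangle inequality gives
\begin{equation*}
|x^k_i| \ge |(x_*)_i| - |x^k_i - (x_*)_i| \ge m_* - |x^k_i - (x_*)_i|.
\end{equation*}
For the complement, since $(x_*)_j = 0$ whenever $j \in S^c$, one has the exact identity $|x^k_j| = |x^k_j - (x_*)_j|$.

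The key step is then to compare these two estimates for an arbitrary pair $i \in S$, $j \in S^c$. It suffices to verify $m_* - |x^k_i - (x_*)_i| > |x^k_j - (x_*)_j|$, equivalently $m_* > |x^k_i - (x_*)_i| + |x^k_j - (x_*)_j|$. Because $i \neq j$, the two error terms are distinct summands of $\onenorm{x^k - x_*}$, whence
\begin{equation*}
|x^k_i - (x_*)_i| + |x^k_j - (x_*)_j| \le \onenorm{x^k - x_*} < m_*,
\end{equation*}
using the hypothesis in the last step. Combining the three displays yields $|x^k_i| > |x^k_j|$ for all $i \in S$ and $j \in S^c$.

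This strict separation is all that is needed: it shows that each of the $s$ support coordinates dominates every off-support coordinate, so the set $S_k$ of the $s$ largest-magnitude entries of $x^k$ must equal $S$. The argument is elementary, and the only point requiring a little care is the bookkeeping in the key step — one must use that $i$ and $j$ are distinct indices so that $|x^k_i - (x_*)_i|$ and $|x^k_j - (x_*)_j|$ appear as separate terms in the $\ell_1$-sum, and the strictness of the hypothesis then rules out any tie between a support and a non-support coordinate, making $S_k$ unambiguously well defined.
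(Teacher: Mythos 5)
Your proof is correct and follows essentially the same route as the paper's: both establish the strict magnitude separation $\min_{i \in S} |x^k_i| > \max_{j \in S^c} |x^k_j|$ via the reverse triangle inequality on the support and the fact that the error terms are dominated by $\onenorm{x^k - x_*} < \min_{i \in S} |(x_*)_i|$. The only (immaterial) difference is bookkeeping: you compare coordinates pairwise using two terms of the $\ell_1$-sum, while the paper bounds each off-support coordinate by the full sum $\sum_{i \in S^c}|x^k_i|$ and subtracts the full sum $\sum_{i \in S}|x^k_i - (x_*)_i|$ on the support side.
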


    We now explore the behavior of the IRLS algorithm for $\ell_1$-minimization, \Cref{alg:algo1}, and the sharpness of \Cref{prop:DDFG:supportcondition} in experiments that build on those of \cite[Section 8.1]{Daubechies10}. For this purprose, for $N=8000$, we sample independently a $200$-sparse vector $x_{*} \in \R^{N}$ with random support $S \subset [N]$, $s=200 = |S|$, chosen uniformly at random such that $(x_{*})_{S}$ is chosen according the Haar measure on the sphere of a $200$-dimensional unit $\ell_2$-ball, and a measurement matrix $A \in \R^{m \times N}$ with i.i.d. Gaussian entries such that $A_{ij} \sim \mathcal{N}(0,1/m)$, while setting $ m = \lfloor 2 s \log(N / s) \rfloor$. Such a matrix is known to fulfill with high probability the $\ell_1$-null space property of order $s$ with constant $\rho_s < 1$ \cite[Theorem 9.29]{FoucartRauhut13}.

 \begin{figure}[t]
 \centering
      \setlength\figureheight{23mm} 
      \setlength\figurewidth{77mm}
%
%
\definecolor{mycolor1}{rgb}{0.00000,0.44700,0.74100}%
\begin{tikzpicture}

\begin{axis}[%
width=1.0\figurewidth,
height=\figureheight,
scale only axis,
xmin=1,
xmax=64,
xtick={0,4,8,12,16,20,24,28,32,36,40,44,48,52,56,60,64},
xlabel style={font=\color{white!15!black}},
xlabel={iteration $k$},
every outer y axis line/.append style={mycolor1},
every y tick label/.append style={font=\color{mycolor1}},
every y tick/.append style={mycolor1},
ymin=0,
ymax=1,
ylabel style={font=\color{mycolor1}},
ylabel={Estimates of linear rates},
axis x line*=bottom,
axis y line*=left,
legend style={font=\fontsize{7}{30}\selectfont, legend cell align=left, align=left,at={(0.25,0.55)}, draw=white!15!black},
xlabel style={font=\tiny},ylabel style={font=\tiny},
]
\addplot [color=mycolor1, mark size=2.5pt, mark=o, mark options={solid, mycolor1}]
  table[row sep=crcr]{%
2	0.68684067318776\\
3	0.707196351365127\\
4	0.724210700726172\\
5	0.720501752838864\\
6	0.708816872998288\\
7	0.706592152517832\\
8	0.7098469642489\\
9	0.713385506738703\\
10	0.714983117844508\\
11	0.712546374132824\\
12	0.707747452925432\\
13	0.702846774374557\\
14	0.700370852544768\\
15	0.697087208921157\\
16	0.693759379833807\\
17	0.692269948223974\\
};
\addlegendentry{$\mu_{\ell_1}(k)$}

\addplot [color=mycolor1, mark size=2.5pt, mark=*, mark options={solid, fill=blue, mycolor1}]
  table[row sep=crcr]{%
18	0.690014937358868\\
19	0.687461809599895\\
20	0.685921421883023\\
21	0.685430397974599\\
22	0.685356017906471\\
23	0.685428899838826\\
24	0.685595378778444\\
25	0.685817930317256\\
26	0.686028788456298\\
27	0.68622389779091\\
28	0.686405697124002\\
29	0.686590080405637\\
30	0.686750750515174\\
31	0.686881963239259\\
32	0.687002402584028\\
33	0.687109706772119\\
34	0.687217959649388\\
35	0.687311989889039\\
36	0.687391522573151\\
37	0.68745814175324\\
38	0.687525690376276\\
39	0.687580226931345\\
40	0.687625883107827\\
41	0.68766922027423\\
42	0.687693319378354\\
43	0.687714137584507\\
44	0.68764772602303\\
45	0.687479489930666\\
46	0.687243178376491\\
47	0.686447267109216\\
48	0.68476939855563\\
49	0.681581164563298\\
50	0.675695745015432\\
51	0.661902148726568\\
52	0.637840470285411\\
53	0.574715916015535\\
54	0.440822098908626\\
55	0.278187611143494\\
56	0.282355042241448\\
57	0.282845307479207\\
58	0.282846921308376\\
59	0.282848746345579\\
60	0.28288095839649\\
61	0.282995556823607\\
62	0.283242475222693\\
63	0.288471844010758\\
};
\end{axis}
\begin{axis}[width=1.0\figurewidth,
height=\figureheight,
scale only axis,
xmin=1,
xmax=64,
xtick={0,4,8,12,16,20,24,28,32,36,40,44,48,52,56,60,64},
xlabel style={font=\color{white!15!black}},
xlabel={iteration $k$},
every outer y axis line/.append style={red},
every y tick label/.append style={font=\color{red}},
every y tick/.append style={red},
ytick={-10,-8,-6,-4,-2,0,2,4,6},
yticklabels={$10^{-10}$,$10^{-8}$,$10^{-6}$,$10^{-4}$,$10^{-2}$,$10^{0}$,$10^{2}$,$10^{4}$,$10^{6}$},
ylabel style={font=\color{red}},
ymin=-10,
ymax=6,
axis y line*=right,
ylabel={Error parameter $\zeta$},
legend style={font=\fontsize{7}{30}\selectfont, legend cell align=left, align=left, at={(0.22,0.3)}, draw=white!15!black},
xlabel style={font=\tiny},ylabel style={font=\tiny},
]
\addplot [color=red, mark size=2.5pt, mark=x, mark options={solid, red}]
  table[row sep=crcr]{%
1	4.97993685121472\\
2	4.81679285642518\\
3	4.66633286778246\\
4	4.52619780531951\\
5	4.38383284702374\\
6	4.23436689416858\\
7	4.08353570397682\\
8	3.93470043333526\\
9	3.78802471522666\\
10	3.64232050260322\\
11	3.49513363707268\\
12	3.34501195215338\\
13	3.1918726081878\\
14	3.03720067186652\\
15	2.88048778566611\\
16	2.72169665362751\\
17	2.56197213271291\\
18	2.40083062510569\\
19	2.23807920202149\\
20	2.07435356846113\\
21	1.91031692939519\\
22	1.74623315993344\\
23	1.58219557163501\\
24	1.41826345321966\\
25	1.25447228852768\\
26	1.09081462931515\\
27	0.927280467631633\\
28	0.763861347104054\\
29	0.600558871716343\\
30	0.437358014427682\\
31	0.274240126861024\\
32	0.111198382737404\\
33	-0.0517755334170894\\
34	-0.214681032659818\\
35	-0.377527112587175\\
36	-0.540322940797616\\
37	-0.703076680999343\\
38	-0.865787750162211\\
39	-1.02846437117872\\
40	-1.19111215546252\\
41	-1.35373256948637\\
42	-1.51633776409332\\
43	-1.67892981171315\\
44	-1.84156380054929\\
45	-2.00430405447289\\
46	-2.16719361677318\\
47	-2.33058643642853\\
48	-2.49504209237535\\
49	-2.66152451214143\\
50	-2.83177332804276\\
51	-3.01097953710628\\
52	-3.2062674658117\\
53	-3.44681424128631\\
54	-3.80255088322502\\
55	-4.35821309808596\\
56	-4.9074175503974\\
57	-5.45586857238652\\
58	-6.00431711643068\\
59	-6.55276285824873\\
60	-7.10115914353001\\
61	-7.64937952659984\\
62	-8.19722114562016\\
63	-8.73711771491348\\
64	-9.04040296122565\\
};
\addlegendentry{$\zeta(k)$}

\addplot [color=red, line width=0.8pt]
  table[row sep=crcr]{%
1	0\\
64	0\\
};

\addplot [color=red, dashed, line width=1.0pt]
  table[row sep=crcr]{%
33	-10\\
33	6\\
};

\end{axis}
\end{tikzpicture}%
 \caption{Instantaneous linear convergence rates of IRLS for $\ell_1$-minimization ($N = 8000$): Linear convergence factors $\mu_{\ell_1}(k) := \|x^{k}-x_{*}\|_1 /  \|x^{k-1}-x_{*}\|_1$ (in blue), filled blue circle if $S_k = S$ with $S_k$ of \Cref{prop:DDFG:supportcondition} (support identification), and error parameter $\zeta(k) : = \|x^{k}-x_{*}\|_1 / \min_{ i \in S} |(x_*)_{i}|$ (in red). Horizontal (red) line: Threshold $\zeta = 1$. Vertical (red) line: First iterate $k$ with $\zeta(k) < 1$.}
 \label{fig:convrate:standardinit}
 \end{figure}
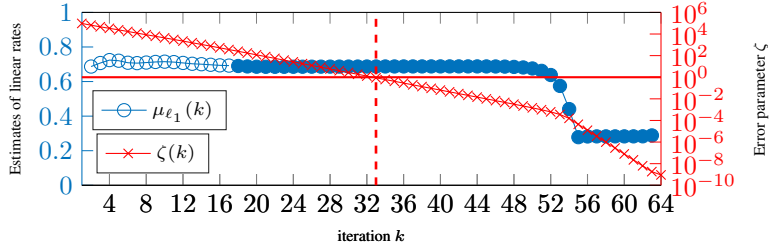

In \Cref{fig:convrate:standardinit}, we track the decay of the $\ell_1$-error $\|x^{k}-x_{*}\|_1$ of the iterates $x^{k}$ returned by \Cref{alg:algo1} via the values of $\zeta(k):=\|x^{k}-x_{*}\|_1 / \min_{ i \in S} |(x_*)_{i}|$, depicted in red, and the behavior of the factor $\mu_{\ell_1}(k):=  \|x^{k}-x_{*}\|_1 /  \|x^{k-1}-x_{*}\|_1$, depicted in blue. We observe that the condition \eqref{eq:DDFG:closeness:assumption} for local convergence with the fast, dimension-less linear rate \eqref{eq:DDFG:localrate} is satisfied after $k=33$ iterations, as indicated by the vertical dashed red line. 

In the first few iterations, $\zeta(k)$ is larger than $1$ by several orders of magnitudes, suggesting that the local convergence rate results of \cite{Daubechies10,AravkinBurkeHe19} do \emph{not} apply until the later stages of the simulation: In fact, we observe that the support $S$ of $x_{*}$ is already perfectly identified via the $s$ largest coordinates of $x^k$ as soon as $k \geq 18$. For iterations $18 \leq k \leq 50$, the linear rate $\mu_{\ell_1}(k)$ remains very stably around $\approx 0.7$, after which an accelerated linear rate can be observed.\footnote{The latter phenomenon cannot be observed for the IRLS algorithm of \cite{Daubechies10} as it uses a slightly different objective function than \Cref{alg:algo1}.} Before $k= 18$, for this example, the rate $\mu(k)$ hovers around $0.7$ with slight variations. For all iterations $k$, $\mu(k)$ is smaller than $1$, in line with the global linear convergence rate implied by \Cref{mainresult:sparse}.

Repeating a similar experiment for a larger ambient space dimension $N=16000$ and a smaller measurement-to-sparsity ratio such that $ m = \lfloor 1.75 s \log(N / s) \rfloor$ results in a qualitatively similar situation, as seen in \Cref{fig:convrate:standardinit:large}(a): In \Cref{fig:convrate:standardinit:large}(a), we add also a plot of the linear convergence factor $\mu(k):=\frac{\mathcal{J}_{\varepsilon_{k}}(x^{k}) -  \onenorm{x_{*}}}{ \mathcal{J}_{\varepsilon_{k-1}}(x^{k-1}) -  \onenorm{x_{*}}}$ that tracks the behavior of the linear convergences in the smoothed $\ell_1$-norm objective $\mathcal{J}$, cf. \eqref{eq:IRLS:suffdecrease}. In addition to what have been observed in \Cref{fig:convrate:standardinit}, we see that $\mu(k)$ and $\mu_{\ell_1}(k)$ exhibit a very similar behavior for this example. 

Hence, these experiments indicate that we can distinguish two phases. In the first, global phase  linear convergence already sets in, but the instantaneous linear convergence rate has not yet stabilized. In the second one, when the support identification problem has been solved, the instantaneous linear convergence stabilizes. 

\begin{remark} There are other methods in the literature, such as proximal algorithms, for which convergence results with a two phase behaviour were already established. For example, \cite{liang2017activity} showed that a forward-backward method applied to the Lasso problem exhibits local linear convergence, and that after a finite number of iterations, the region of fast convergence is reached. In particular, \cite[Proposition 3.6(ii)]{liang2017activity} provides a bound on this number of iterations, which scales proportionally with $||x_*-x^0||_2^2$. On the other hand, \eqref{mainresult:sparse} for IRLS provides a bound on the number of iterations until the fast linear convergence rate is reached that scales proportionally with $\log(||x_*-x^0||_2)$, but also proportionally with the dimension $N$. Moreover, most of these results require stronger assumptions than the NSP, such as the restricted isometry property or a restricted strong convexity/smoothness property.
\end{remark}

 \begin{figure}[h]
 \begin{subfigure}[b]{0.5\textwidth}
     \setlength\figureheight{33mm} 
     \setlength\figurewidth{45mm}
%
%
\definecolor{mycolor1}{rgb}{0.46667,0.67451,0.18824}%
\definecolor{mycolor2}{rgb}{0.00000,0.44700,0.74100}%
\begin{tikzpicture}

\begin{axis}[%
width=1.05\figurewidth,
height=\figureheight,
at={(0\figurewidth,0\figureheight)},
scale only axis,
unbounded coords=jump,
xmin=0,
xmax=101,
xtick={0,10,20,30,40,50,60,70,80,90,100},
xlabel style={font=\color{white!15!black}},
xlabel={iteration $k$},
every outer y axis line/.append style={mycolor1,mycolor2},
every y tick label/.append style={font=\color{mycolor2}},
every y tick/.append style={mycolor1},
ymin=0,
ymax=1,
ylabel style={font=\color{mycolor1}},
ylabel={Estimates of linear rates},
axis background/.style={fill=white},
axis x line*=bottom,
axis y line*=left,
legend style={font=\fontsize{8}{30}\selectfont, legend cell align=left, align=left, at={(0.45,0.5)}, draw=white!15!black},
xlabel style={font=\tiny},ylabel style={font=\tiny},
]
\addplot [color=mycolor2, mark size=2.5pt, mark=o, mark options={solid, mycolor2}]
  table[row sep=crcr]{%
2	0.737994739872931\\
3	0.729242802767835\\
4	0.762327233397151\\
5	0.801728130532322\\
6	0.824072573044909\\
7	0.834290787610712\\
8	0.838906970697082\\
9	0.841230050667104\\
10	0.845071271464089\\
11	0.848349292502206\\
12	0.847022984429376\\
13	0.841737863210088\\
14	0.835944943221669\\
15	0.830378250107056\\
16	0.825792271736375\\
17	0.822938040368303\\
18	0.82022064983958\\
19	0.815333039432517\\
20	0.811304627947717\\
21	0.809197646701936\\
22	0.806947338869101\\
23	0.805696761593037\\
24	0.806651658296079\\
25	0.808778702220582\\
26	0.810278324272705\\
27	0.810864713879848\\
28	0.810777105905196\\
29	0.807999931602385\\
};
\addlegendentry{$\mu_{\ell_1}(k)$}

\addplot [color=mycolor1,line width=0.7pt,  mark size=2.5pt, mark=o, mark options={solid, mycolor1}]
  table[row sep=crcr]{%
3	0.654149504974455\\
4	0.674948655769022\\
5	0.729680017853859\\
6	0.775253409565895\\
7	0.801612289423115\\
8	0.816796923600094\\
9	0.825817143198606\\
10	0.832218560121504\\
11	0.838738847539613\\
12	0.843871845216768\\
13	0.842295723562042\\
14	0.838365599883311\\
15	0.830585590338281\\
16	0.827277653115182\\
17	0.823908022042588\\
18	0.820632295461646\\
19	0.81757275848679\\
20	0.813240726265107\\
21	0.809668061014396\\
22	0.807111280568645\\
23	0.80572283651578\\
24	0.804942811148425\\
25	0.806381767525927\\
26	0.808630495440788\\
27	0.810669188453481\\
28	0.81171542115629\\
29	0.809846558009863\\
};
\addlegendentry{$\mu(k)$}

\addplot [color=mycolor1, mark size=2.5pt, mark=*, mark options={solid, fill=mycolor1}]
  table[row sep=crcr]{%
30	0.807720480803212\\
31	0.80478366823543\\
32	0.803809657429639\\
33	0.803612124569881\\
34	0.803610781665043\\
35	0.803683732076394\\
36	0.80377713275341\\
37	0.803872415155666\\
38	0.803966974571665\\
39	0.804088356530059\\
40	0.804191099771103\\
41	0.804295342145963\\
42	0.804406029956979\\
43	0.804511665538286\\
44	0.804608082745531\\
45	0.804698820034218\\
46	0.804792692237271\\
47	0.804882391898346\\
48	0.80496378928477\\
49	0.805036571536931\\
50	0.805106554934952\\
51	0.805174770704014\\
52	0.805233303173566\\
53	0.805290646162078\\
54	0.805344812766819\\
55	0.805396070803354\\
56	0.805443122424734\\
57	0.805489837106862\\
58	0.805537203873769\\
59	0.805583296928802\\
60	0.805623093274392\\
61	0.805662602667667\\
62	0.805701039144083\\
63	0.805735247136553\\
64	0.805772550713741\\
65	0.805805048226475\\
66	0.805831084208506\\
67	0.805854767677246\\
68	0.805878189392303\\
69	0.805896261487035\\
70	0.805920236691051\\
71	0.805941565648212\\
72	0.805942690342562\\
73	0.805947005273438\\
74	0.805922859660129\\
75	0.805847721043719\\
76	0.80569662787137\\
77	0.805575119571898\\
78	0.805490794747584\\
79	0.805321292717878\\
80	0.804847635086062\\
81	0.804064845966451\\
82	0.803020151185052\\
83	0.801920986595036\\
84	0.80000472908165\\
85	0.796330571058391\\
86	0.792012794233884\\
87	0.782871032613258\\
88	0.768860707959959\\
89	0.746931967159725\\
90	0.711245932683767\\
91	0.609639269057155\\
92	0.442630424906653\\
93	0.303562089208853\\
94	0.295606344023767\\
95	0.29512861562165\\
96	0.291654430957976\\
97	0.355605588107792\\
98	0.210193086633367\\
};

\addplot [color=mycolor2, mark size=1.5pt, mark=*, mark options={solid, fill=blue, mycolor2}]
  table[row sep=crcr]{%
30	0.805502240900026\\
31	0.804204739149256\\
32	0.803860120001559\\
33	0.803795274410831\\
34	0.803833693204481\\
35	0.803900665828422\\
36	0.803974467555093\\
37	0.804050205601129\\
38	0.804162391979801\\
39	0.804252459685642\\
40	0.804345562908054\\
41	0.804448664665102\\
42	0.804548607219374\\
43	0.804637817532068\\
44	0.804721279617368\\
45	0.804814972549011\\
46	0.804905011408366\\
47	0.804985559896537\\
48	0.805055653225509\\
49	0.805124741845759\\
50	0.805192671224802\\
51	0.805248352581174\\
52	0.805304495031123\\
53	0.805357151672482\\
54	0.805407329212727\\
55	0.805452296781141\\
56	0.805497524793068\\
57	0.805544446408717\\
58	0.805590157247721\\
59	0.805627901291061\\
60	0.805666668431357\\
61	0.80570460799774\\
62	0.805737527420447\\
63	0.805775011274719\\
64	0.805806493679924\\
65	0.80583218619546\\
66	0.805854497677728\\
67	0.805877929081504\\
68	0.805893316816054\\
69	0.805919200674651\\
70	0.805945633925532\\
71	0.805937905997666\\
72	0.805950537401529\\
73	0.805924656607232\\
74	0.8058895752675\\
75	0.805689932659291\\
76	0.805623482980522\\
77	0.805486600365001\\
78	0.805437388773342\\
79	0.805014655595926\\
80	0.804256221513356\\
81	0.803303112134302\\
82	0.802131656987086\\
83	0.800746002623995\\
84	0.796619524687173\\
85	0.793425706056013\\
86	0.784383167691843\\
87	0.770967420162293\\
88	0.747619363996634\\
89	0.718110876746366\\
90	0.632385553512833\\
91	0.449368339423657\\
92	0.290740129549903\\
93	0.294704960163997\\
94	0.295668186878743\\
95	0.295667358606337\\
96	0.295672303708258\\
97	0.295676636622758\\
98	0.295822640132243\\
99	0.296041036404276\\
100	0.299295136259204\\
};
\end{axis}

\begin{axis}[%
width=1.05\figurewidth,
height=\figureheight,
at={(0\figurewidth,0\figureheight)},
scale only axis,
xmin=0,
xmax=101,
xlabel style={font=\color{white!15!black}},
xlabel={iteration $k$},
every outer y axis line/.append style={red},
every y tick label/.append style={font=\color{red}},
every y tick/.append style={red},
ymin=-9,
ymax=7,
ytick={-8,-6,-4,-2,0,2,4,6},
yticklabels=\empty,
ylabel style={font=\color{red}},
axis y line*=right,
legend style={font=\fontsize{9}{30}\selectfont, legend cell align=left, align=left, at={(0.4,0.2)}, draw=white!15!black},
xlabel style={font=\tiny},ylabel style={font=\tiny},
]
\addplot [color=red, mark size=2.5pt, mark=x, mark options={solid, red}]
  table[row sep=crcr]{%
1	4.76541972214669\\
2	4.63347298850591\\
3	4.49634514006911\\
4	4.37848657486321\\
5	4.28251369697953\\
6	4.19847915708084\\
7	4.11979660513317\\
8	4.04351040821099\\
9	3.96842518649749\\
10	3.89531852443924\\
11	3.82389322642825\\
12	3.75178842173584\\
13	3.67696528486678\\
14	3.59914295986273\\
15	3.51841892514771\\
16	3.43528973931493\\
17	3.35065687739584\\
18	3.26458757626402\\
19	3.17592261769574\\
20	3.08510657103884\\
21	2.99316118212988\\
22	2.90000637585467\\
23	2.8061779939047\\
24	2.71286402485989\\
25	2.62069373120414\\
26	2.52932795247005\\
27	2.43827635425642\\
28	2.34717783117993\\
29	2.25458915519127\\
30	2.16065590815155\\
31	2.06602253620755\\
32	1.97120301978606\\
33	1.87634846839085\\
34	1.78151467435997\\
35	1.68671706272559\\
36	1.59195931946409\\
37	1.49724248678814\\
38	1.40258624551082\\
39	1.30797864331145\\
40	1.21342131372864\\
41	1.11891964884608\\
42	1.02447193619904\\
43	0.930072376514237\\
44	0.835717862243093\\
45	0.741413909521211\\
46	0.64715854087632\\
47	0.552946630792633\\
48	0.458772534829183\\
49	0.364635707741509\\
50	0.270535521075474\\
51	0.176465366065942\\
52	0.0824254893006291\\
53	-0.0115859910738811\\
54	-0.105570413701451\\
55	-0.199530589440564\\
56	-0.293466379223336\\
57	-0.387376871342697\\
58	-0.481262720002593\\
59	-0.57512822128658\\
60	-0.668972824647339\\
61	-0.762796977172747\\
62	-0.856603385686979\\
63	-0.950389590782764\\
64	-1.04415882790664\\
65	-1.13791421810828\\
66	-1.23165758394598\\
67	-1.32538832221698\\
68	-1.41911076798589\\
69	-1.51281926521322\\
70	-1.60651351829954\\
71	-1.70021193570213\\
72	-1.79390354649349\\
73	-1.88760910363279\\
74	-1.98133356572027\\
75	-2.07516562868393\\
76	-2.16903351177862\\
77	-2.26297519164901\\
78	-2.35694340576044\\
79	-2.4511396188255\\
80	-2.54574518965128\\
81	-2.64086574015316\\
82	-2.73662008357682\\
83	-2.83312530426179\\
84	-2.93187435774963\\
85	-3.03236809075515\\
86	-3.13783982531191\\
87	-3.25080379945641\\
88	-3.3771232579963\\
89	-3.52093175325053\\
90	-3.7199498130867\\
91	-4.06734734246236\\
92	-4.60384236284441\\
93	-5.13445491734493\\
94	-5.66365031925196\\
95	-6.19284693777497\\
96	-6.72203629268753\\
97	-7.25121928330067\\
98	-7.78018787459908\\
99	-8.30883595864601\\
100	-8.83273629931651\\
101	-8.98958758188758\\
};
\addlegendentry{$\zeta(k)$}

\addplot [color=red, line width=0.8pt]
  table[row sep=crcr]{%
1	0\\
101	0\\
};

\addplot [color=red, dashed, line width=1.0pt]
  table[row sep=crcr]{%
53	-9\\
53	7\\
};

\end{axis}
\end{tikzpicture}%
 \captionsetup{justification=centering,margin=1mm}
 \caption{Standard initialization (uniform \\ weights $(w_0)_i = 1$ for all $i$).}
 \end{subfigure}
 \begin{subfigure}[b]{0.5\textwidth}
     \setlength\figureheight{33mm} 
     \setlength\figurewidth{45mm}\hspace*{3mm}
%
%
\definecolor{mycolor1}{rgb}{0.46667,0.67451,0.18824}%
\definecolor{mycolor2}{rgb}{0.00000,0.44700,0.74100}%
\begin{tikzpicture}

\begin{axis}[%
width=1.0\figurewidth,
height=\figureheight,
at={(0\figurewidth,0\figureheight)},
scale only axis,
unbounded coords=jump,
xmin=0,
xmax=100,
xtick={0,10,20,30,40,50,60,70,80,90,100},
xlabel style={font=\color{white!15!black}},
xlabel={iteration $k$},
every outer y axis line/.append style={mycolor2},
every y tick label/.append style={font=\color{mycolor2}},
every y tick/.append style={mycolor2},
ymin=0,
ymax=1,
axis background/.style={fill=white},
axis x line*=bottom,
axis y line*=left,
legend style={font=\fontsize{8}{30}\selectfont, legend cell align=left, align=left,  at={(0.45,0.5)}, draw=white!15!black},
xlabel style={font=\tiny},ylabel style={font=\tiny},
]
\addplot [color=blue, mark size=2.5pt, mark=o, mark options={solid, blue}]
  table[row sep=crcr]{%
2	0.989937038967682\\
3	0.968546552936381\\
4	0.945739409718627\\
5	0.922799593278974\\
6	0.903450848797426\\
7	0.888256095302694\\
8	0.873938262469174\\
9	0.862076232459906\\
10	0.851888075617552\\
11	0.843690808881313\\
12	0.838214374354926\\
13	0.834805923745606\\
14	0.832711696396053\\
15	0.831190065544973\\
16	0.828912894592705\\
17	0.823859030418603\\
18	0.814992952305936\\
19	0.805924480957863\\
20	0.799850558314915\\
21	0.795571688985866\\
22	0.792386335595644\\
23	0.78956230291093\\
24	0.787328741063433\\
25	0.785710221595671\\
26	0.784176335120897\\
27	0.782847395569583\\
28	0.782612572907667\\
29	0.784087517725716\\
30	0.786184211820327\\
31	0.786393606655757\\
32	0.78532404294274\\
33	0.785004236811908\\
34	0.784811657891758\\
35	0.784967296981115\\
36	0.785773503479\\
37	0.786950715088217\\
38	0.787259703208939\\
};
\addlegendentry{$\mu_{\ell_1}(k)$}

\addplot [color=mycolor1, line width=0.7pt, mark size=2.5pt, mark=o, mark options={solid, mycolor1}]
  table[row sep=crcr]{%
2	0.979673746942339\\
3	0.964429374021478\\
4	0.944777792446196\\
5	0.928215612148456\\
6	0.918012859382131\\
7	0.902376710271128\\
8	0.869510610740377\\
9	0.856239036646947\\
10	0.845144060424161\\
11	0.836141614944118\\
12	0.830490390252301\\
13	0.828128067512857\\
14	0.82852286442008\\
15	0.828586511578777\\
16	0.828320322266265\\
17	0.827215082731811\\
18	0.821519128706093\\
19	0.811782448903507\\
20	0.802962692203778\\
21	0.797747836560021\\
22	0.794355510245175\\
23	0.790217400128832\\
24	0.788196832594558\\
25	0.786307903890226\\
26	0.784068971762096\\
27	0.783239177018551\\
28	0.782122477267812\\
29	0.78216264385145\\
30	0.783955488666891\\
31	0.785597485849863\\
32	0.785916921667457\\
33	0.784651569538987\\
34	0.784376818643693\\
35	0.784755619981453\\
36	0.784827380190912\\
37	0.785820508766029\\
38	0.786995743720475\\
};
\addlegendentry{$\mu(k)$}

\addplot [color=mycolor1, mark size=2.5pt, mark=*, mark options={solid, fill=mycolor1}]
  table[row sep=crcr]{%
39	0.787215300914553\\
40	0.786585288908427\\
41	0.785421731581508\\
42	0.78529650466335\\
43	0.785454699256945\\
44	0.785659814252469\\
45	0.785862612975095\\
46	0.786050179809607\\
47	0.786217441100504\\
48	0.786383762559737\\
49	0.786543375529163\\
50	0.786687215070458\\
51	0.786822835344267\\
52	0.786953606584149\\
53	0.787065019256531\\
54	0.787165142865661\\
55	0.78726184313696\\
56	0.787350716355348\\
57	0.787430602328769\\
58	0.787515359411572\\
59	0.787595316824024\\
60	0.787670277706951\\
61	0.787740202459744\\
62	0.787801397321886\\
63	0.787857731041157\\
64	0.787909086635977\\
65	0.787951291803026\\
66	0.788001617373411\\
67	0.788036855265503\\
68	0.788081754583814\\
69	0.788111616709287\\
70	0.788142210341661\\
71	0.788162700843756\\
72	0.788150810653491\\
73	0.788102461839628\\
74	0.78804509884614\\
75	0.787856651075685\\
76	0.787600561963018\\
77	0.787201590578832\\
78	0.786790634744605\\
79	0.78579031693134\\
80	0.784469782122528\\
81	0.783076154036692\\
82	0.779649811365516\\
83	0.774288619819956\\
84	0.766838052419587\\
85	0.752720100207995\\
86	0.729574956595584\\
87	0.68160816888637\\
88	0.576905999342145\\
89	0.421723509523844\\
90	0.292988306432619\\
91	0.291523795420545\\
92	0.290521573091567\\
93	0.29024977357156\\
94	0.245248385535731\\
95	0.0190515071102908\\
};

\addplot [color=mycolor2, mark size=1.5pt, mark=*, mark options={solid, fill=blue, mycolor2}]
  table[row sep=crcr]{%
39	0.786383766782677\\
40	0.785759146974016\\
41	0.785479082909958\\
42	0.785580527561137\\
43	0.785756826173525\\
44	0.785941612544917\\
45	0.786113744207299\\
46	0.786265642765187\\
47	0.786427415115682\\
48	0.786584114393776\\
49	0.78672260631852\\
50	0.786856215589694\\
51	0.786988616181873\\
52	0.787098095263616\\
53	0.787193819756425\\
54	0.787287055614357\\
55	0.787370783355996\\
56	0.78744466556951\\
57	0.787527907434097\\
58	0.787606903416389\\
59	0.787681480937735\\
60	0.787751335097781\\
61	0.787810689579516\\
62	0.78786621911226\\
63	0.787916832969515\\
64	0.787954747065953\\
65	0.788006067986231\\
66	0.788039241162096\\
67	0.788085630710995\\
68	0.78811348280648\\
69	0.788142600222855\\
70	0.788171153841873\\
71	0.788155041346101\\
72	0.78809897000724\\
73	0.788076678624904\\
74	0.787905702041243\\
75	0.787690857972635\\
76	0.787248013591338\\
77	0.786916546778372\\
78	0.786206766040323\\
79	0.784601432370081\\
80	0.783741115423538\\
81	0.780677892328129\\
82	0.775205830764662\\
83	0.76888073607347\\
84	0.755260657964371\\
85	0.732918581722473\\
86	0.690992859475983\\
87	0.600208769567132\\
88	0.429099902649867\\
89	0.28100558569264\\
90	0.291525058926581\\
91	0.291571380882615\\
92	0.291572633343378\\
93	0.291584834477803\\
94	0.291611439941818\\
95	0.291605137082755\\
96	0.292292199504985\\
97	0.29540645867705\\
};
\end{axis}

\begin{axis}[%
width=1.0\figurewidth,
height=\figureheight,
at={(0\figurewidth,0\figureheight)},
scale only axis,
xmin=0,
xmax=100,
xlabel style={font=\color{white!15!black}},
xlabel={iteration $k$},
every outer y axis line/.append style={red},
every y tick label/.append style={font=\color{red}},
every y tick/.append style={red},
ymin=-9,
ymax=7,
ytick={-8,-6,-4,-2,0,2,4,6},
yticklabels=\empty,
yticklabels={$10^{-8}$,$10^{-6}$,$10^{-4}$,$10^{-2}$,$10^{0}$,$10^{2}$,$10^{4}$,$10^{6}$},
ylabel style={font=\color{red}},
axis y line*=right,
ylabel={Error parameter $\zeta$},
legend style={font=\fontsize{9}{30}\selectfont, legend cell align=left, align=left, at={(0.4,0.2)}, draw=white!15!black},
xlabel style={font=\tiny},ylabel style={font=\tiny},
]
\addplot [color=red, mark size=2.5pt, mark=x, mark options={solid, red}]
  table[row sep=crcr]{%
1	5.53128311857644\\
2	5.5268906924685\\
3	5.51301119227583\\
4	5.48878267908807\\
5	5.45389007352324\\
6	5.40979460377193\\
7	5.35833280011595\\
8	5.29981355401352\\
9	5.23535922572317\\
10	5.1657417649327\\
11	5.0919250828868\\
12	5.01528018706954\\
13	4.93686570920817\\
14	4.85736037407613\\
15	4.77706071793134\\
16	4.69556961351744\\
17	4.61142251993682\\
18	4.52257637310885\\
19	4.42887072131608\\
20	4.33187957360573\\
21	4.23255889363884\\
22	4.13149587132492\\
23	4.02888227640115\\
24	3.92503838184727\\
25	3.8203007849269\\
26	3.71471451694683\\
27	3.6083916280197\\
28	3.5019384487126\\
29	3.39630298887672\\
30	3.29182730693093\\
31	3.18746728097498\\
32	3.08251617469309\\
33	2.97738817541167\\
34	2.87215362100739\\
35	2.76700518471253\\
36	2.66230256489349\\
37	2.55825009924053\\
38	2.45436812139018\\
39	2.35000266122256\\
40	2.24529210653701\\
41	2.14042673092544\\
42	2.03561744083982\\
43	1.93090560343\\
44	1.82629588701212\\
45	1.72178127644546\\
46	1.6173505752791\\
47	1.51300922001408\\
48	1.40875439130115\\
49	1.30457602101564\\
50	1.20047140079404\\
51	1.09643985111352\\
52	0.992468712540997\\
53	0.888550388340019\\
54	0.784683499274646\\
55	0.680862794841022\\
56	0.57708284011897\\
57	0.473348792840778\\
58	0.369658306937767\\
59	0.266008941894074\\
60	0.162398089792293\\
61	0.0588199591238863\\
62	-0.0447275609929177\\
63	-0.148247182194458\\
64	-0.251745905902726\\
65	-0.355216344145673\\
66	-0.45866850001165\\
67	-0.562095090993185\\
68	-0.665506333645597\\
69	-0.768901531274113\\
70	-0.87228099513243\\
71	-0.975669337315582\\
72	-1.07908857740339\\
73	-1.18252010168597\\
74	-1.28604585822398\\
75	-1.38969005319528\\
76	-1.49357847971065\\
77	-1.59764980223774\\
78	-1.70211302522971\\
79	-1.80746392858652\\
80	-1.9132912979304\\
81	-2.02081941698488\\
82	-2.13140238635368\\
83	-2.2455434063469\\
84	-2.36744654375218\\
85	-2.50239081122918\\
86	-2.66291725170766\\
87	-2.884614915156\\
88	-3.25205649915194\\
89	-3.80334154646615\\
90	-4.33866565465831\\
91	-4.8739207609888\\
92	-5.40917400178768\\
93	-5.94440906950072\\
94	-6.47960451211079\\
95	-7.01480934161767\\
96	-7.54899211626072\\
97	-8.07857212989841\\
98	-8.13012704103876\\
};
\addlegendentry{$\zeta(k)$}

\addplot [color=red, line width=0.8pt]
  table[row sep=crcr]{%
1	0\\
98	0\\
};

\addplot [color=red, dashed, line width=1.0pt]
  table[row sep=crcr]{%
62	-9\\
62	7\\
};

\end{axis}
\end{tikzpicture}%
 \captionsetup{justification=centering,margin=1mm}
 \caption{Adversary initialization \\ (weights $(w_0)_i$  as in \eqref{eq:adv:init}).}
 \end{subfigure}
 \caption{Instantaneous linear convergence rates of IRLS for $\ell_1$-minimization ($N = 16000$): Linear convergence factors $\mu_{\ell_1}(k) := \frac{\|x^{k}-x_{*}\|_1}{\|x^{k-1}-x_{*}\|_1}$ (in blue) and $\mu(k):= \frac{\mathcal{J}_{\varepsilon_{k}}(x^{k}) -  \onenorm{x_{*}}}{ \mathcal{J}_{\varepsilon_{k-1}}(x^{k-1}) -  \onenorm{x_{*}}}$ (in green), 
  filled circles if $S_k = S$ (perfect support identification), and error parameter $\zeta(k) : = \|x^{k}-x_{*}\|_1 / \min_{ i \in S} |(x_*)_{i}|$ (in red), horizontal and vertical red lines as in \Cref{fig:convrate:standardinit}.}
 \label{fig:convrate:standardinit:large}
 \end{figure}
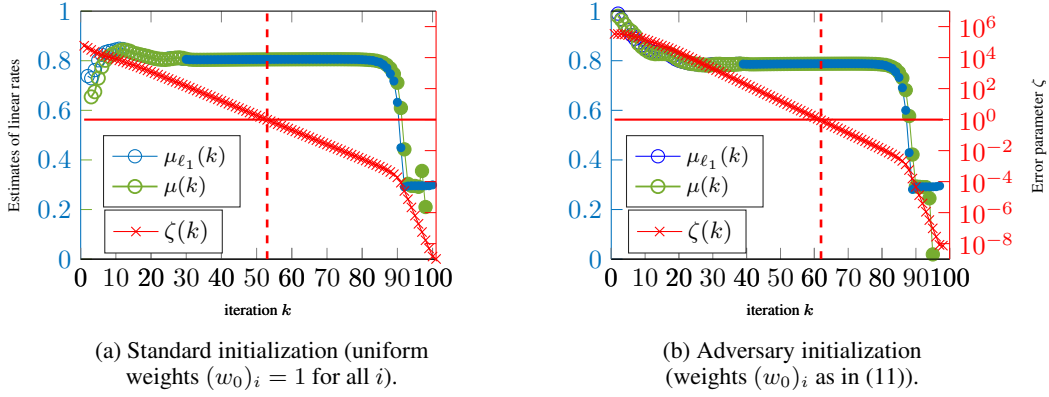

\vspace*{-1mm}

\subsection{Global convergence rate and its dimension dependence} \label{sec:numerics:2}
In this section, we explore to which extent  the dependence on $N$ in the convergence rates \eqref{ineq:globalconvergenerate1} and \eqref{ineq:approxsparse2} is necessary or if we can rather expect a dimension-free linear convergence rate factor. 
To this end, we run a variation of IRLS that initializes the weight vector $w_0 \in \R^N$ not uniformly as in \Cref{alg:algo1}, but based on an \emph{adversary initialization}, here denoted by $z^{\text{adv}}$. More specifically, we first compute a minimizer
\begin{equation*}
z^{\text{adv}}  \in \argmin_{z \in \R^{S^c}: A_{S^c} z   = y} \|z\|_1
\end{equation*}
of the $\ell_1$-minimization problem restricted to the off-support coordinates of $x_{*}$ indexed by $S^c$ and set then $x^0 \in \R^N$ such that $x_{S^c}^0 :=z^{\text{adv}}$ and $x_{S}^0 = 0$. Based on this \emph{initialization} $x^0$, we compute $\varepsilon_0:= \frac{ \sigma_s(x^{0})_{\ell_1}}{N}$ and set the first weight vector such that for all $i \in [N]$,
\begin{equation} \label{eq:adv:init}
\left( w_{0} \right)_i :=  \frac{1}{ \max \left( \vert x_i^{0}  \vert , \varepsilon_{0}\right)},
\end{equation}
before proceeding with the IRLS steps \eqref{step_1}, \eqref{step_2} and \eqref{eq:IRLS:step3} until convergence.

We observe in \Cref{fig:convrate:standardinit:large}(b) that this initialization, which is \emph{adversary} as it sets very large initial weights on the coordinates of $S$ that correspond to the true support of $x_{*}$, eventually results in the same behavior of \Cref{alg:algo1} as for the standard initialization by uniform weights, identifying the true support at iteration $k=39$ compared to $k=30$. However, in the first few iterations, we see that the instantaneous linear convergence factor $\mu(k)$ is close to $1$ with $\mu(1)=0.980$, decreasing only slowly before stabilizing around $0.79$ after around $k=30$. 

While this is just one example, this already indicates that in general, a linear rate such as \eqref{eq:DDFG:localrate}, i.e.,  without dependence on the dimension $N$ (which has been proven locally in \cite[Theorem 6.1]{Daubechies10} and \cite[Theorem III.6]{AravkinBurkeHe19}) might not hold in general.

\begin{figure}[h]
\centering
     \setlength\figureheight{27.5mm} 
     \setlength\figurewidth{120mm}
%
%
\begin{tikzpicture}

\begin{axis}[%
width=0.8\figurewidth,
height=\figureheight,
at={(0\figurewidth,0\figureheight)},
scale only axis,
xmin=0,
xmax=1.6,
xtick={0.05,0.2,0.4,0.8,1.6}, 
xticklabels={500,2000,4000,8000,16000},
xticklabel style={anchor=north, inner sep =4pt},
xlabel style={font=\color{white!15!black}},
xlabel={Dimension $N$},
ymin=0,
ymax=160,
axis background/.style={fill=white},
axis x line*=bottom,
axis y line*=left,
legend style={legend cell align=left, align=left, at={(0.3,0.95)}, draw=white!15!black},
xlabel style={font=\tiny},ylabel style={font=\tiny},
]
\addplot [color=blue, mark size=4.0pt, mark=o, mark options={solid, blue}]
  table[row sep=crcr]{%
0.0125	1.64580517927425\\
0.0176776695296638	1.83601105561502\\
0.0250	2.47873440328476\\
0.0353553390593273	3.44032788844197\\
0.0500	4.90708598998026\\
0.0707106781186547	6.88485259526169\\
0.1000	9.99811291374135\\
0.14142135623731	14.4847212588611\\
0.2000	22.4094960516231\\
0.282842712474619	31.0423630117493\\
0.4000	43.9518811091548\\
0.565685424949238	62.290510457482\\
0.8000	84.0157447257843\\
1.13137084989848	117.390922393224\\
1.6000	155.744340969093\\
};
\addlegendentry{$\frac{1}{1-\mu(1)}$}

\addplot [color=red, mark size=4.0pt, mark=x, mark options={solid, red}]
  table[row sep=crcr]{%
0.0125	1.25\\
0.0176776695296638	1.76776695296576\\
0.0250	2.5\\
0.0353553390593273	3.53553390593333\\
0.0500	5\\
0.0707106781186547	7.07106781186485\\
0.1000	10\\
0.14142135623731	14.1421356237315\\
0.2000	20\\
0.282842712474619	28.2842712474612\\
0.4000	40\\
0.565685424949238	56.5685424949243\\
0.8000	80\\
1.13137084989848	113.137084989847\\
1.6000	160\\
};
\addlegendentry{$\frac{N}{100}$}

\end{axis}
\end{tikzpicture}
 \captionsetup{justification=centering,margin=1mm}
 \caption{Comparison of $\frac{N}{100}$ and $\frac{1}{1-\mu(1)}$ (for which \Cref{thm:p1:linearrate} provides un upper bound of $\frac{\rho_1 N}{c}$) for different dimension parameters $N$, where $\mu(1) = \frac{\mathcal{J}_{\varepsilon_{1}}(x^{1}) -  \onenorm{x_{*}}}{ \mathcal{J}_{\varepsilon_{0}}(x^{0}) -  \onenorm{x_{*}}}$ is the linear convergence factor, for IRLS initialized from adversary initialization.}
 \label{fig:convrate:dimdep}
 \end{figure}
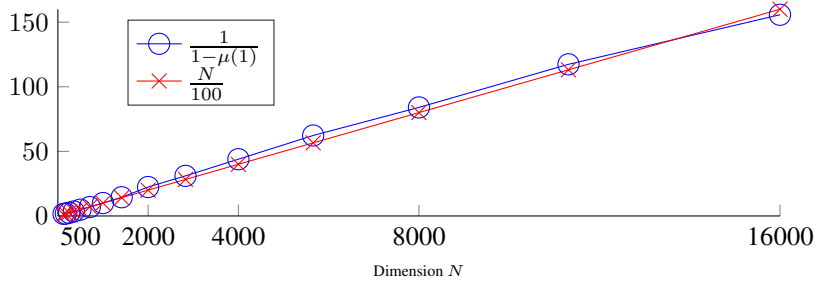

In our next experiment, we further investigate numerically the dimension dependence of the worst-case linear convergence factor $\mu(k):=\frac{\mathcal{J}_{\varepsilon_{k}}(x^{k}) -  \onenorm{x_{*}}}{ \mathcal{J}_{\varepsilon_{k-1}}(x^{k-1}) -  \onenorm{x_{*}}}$, which is upper bounded by the result of \Cref{mainresult:sparse}. We saw that in the experiment using the adversary initialization mentioned above and depicted in \Cref{fig:convrate:standardinit:large}(b), the maximal value was attained in the first iteration, i.e., for $\mu(1)$, as the effect of the adversary initialization is most eminent for $k=1$. 

We now run IRLS starting from the adversary initialization for different ambient dimensions $N= 125\cdot 2^{\ell/2}$ for $\ell=0,1,\ldots, 14$. For each of the values of $N$, we sample vectors $x_{*} \in \R^{N}$ of sparsity $s=40$ from the same random model as above, and scale the number of i.i.d. Gaussian measurements with $ m = \lfloor 2 s \log(N / s) \rfloor$. We average the resulting values for $\mu(1)$ across $500$ independent realizations of the experiment.

In \Cref{fig:convrate:dimdep}, we see that the dependence on $N$ of linear convergence factor $\mu(1)$ that is observed for this experiment is quite well described by the upper bound \eqref{equ:linearconvergence1} provided by our main result \Cref{mainresult:sparse}, as $\frac{1}{1-\mu(1)}$ scales almost linearly with $N$. As a footnote in \Cref{sec:main:results} indicates, the constant $\rho_1$ of the null space property of order $1$ scales with  $  \sqrt{\frac{\log N}{m}}$, and therefore a precise dependence on all parameters, including $m$ and $s$, might be more complicated than what can be observed in this experiment. 

Nevertheless, we interpret \Cref{fig:convrate:dimdep} as strong evidence that the linear convergence rate factor of \Cref{thm:p1:linearrate} is tight in its dependence on $N$, and that a dimension-less factor $\mu$ cannot be expected in general.
\vspace{-4mm}
\section*{Conclusion}
\vspace*{-2mm}
In this paper, we solved an open problem in the algorithmic theory for sparse recovery. In particular, we established a new variant of the IRLS algorithm for Basis Pursuit or $\ell_1$-minimization for which we show a global linear convergence rate under a suitable and sharp assumption, namely, the null-space property. Moreover, we have corroborated our theory with numerical experiments that, first, discussed the difference between the local and global convergence phase and, second, that elucidated the optimality of the dimension dependence of  convergence rate given by our main theorem. 

We think that the results in this paper give rise to a number of interesting research directions for follow-up work. While the numerical experiments in \Cref{section:numericalexperiments} substantiate the hypothesis that the dependence of the convergence rate on $N$ and $\rho_1$ in our theory is not an artifact of our proof, we also observed in this section that for a \textit{generic initialization} no such dependence can be observed. In view of this, it is interesting to investigate whether a dimension-independent global convergence rate is possible, for example via a \textit{smoothed analysis} \cite{spielman_teng, dadush_huiberts}. Furthermore, there are currently no convergence rates available for IRLS optimizing a nuclear norm-type objective, which is of great interest for low-rank matrix recovery \cite{mohan_fazel,fornasier_rauhut_ward,kummerle_sigl}, and we expect that our analysis may be generalizable to this setting as well.

Finally, it was observed that sparse vectors can be recovered from even fewer measurements via the optimization of a non-convex $\ell_p$-quasinorm (with $0 < p < 1$), and that IRLS exhibits excellent performance in this case \cite{chartrand_yin,Daubechies10}. While a thorough understanding has remained elusive so far for this non-convex case, we consider our results as a first step towards a global convergence theory for IRLS for the optimization of $\ell_p$-quasinorms or similar non-convex surrogate objectives.

\vspace*{-4mm}
\begin{ack}
\vspace*{-2mm}
The authors want to thank Massimo Fornasier for inspiring discussions and the anonymous reviewers
for their detailed comments.
C.K. is grateful for support by the NSF under the grant NSF-IIS-1837991. C.M.V. is supported by the German Federal Ministry of Education and Research (BMBF) in the context of the Collaborative Research Project SparseMRI3D+ and by the German Science Foundation (DFG) in the context of the project KR 4512/1-1. D.S. was supported by the Air Force Office of Scientific Research under award \#FA9550-18-1-00788.
\end{ack}

\bibliography{GlobalConv_IRLS.bib}

\newpage

\appendix

\section*{Supplementary material of \emph{Iteratively Reweighted Least Squares for Basis Pursuit with Global Linear Convergence Rate}}

In this supplement to the paper, we present in Section \ref{app:approxsparse} our second main result, Theorem \ref{mainresult:approximatesparse}, which generalizes the global linear convergence of \Cref{alg:algo1} for sparse vectors, which we presented Theorem \ref{mainresult:sparse}, to approximately sparse vectors. 
In Section \ref{section:proofs} we provide the proof of our theoretical results. In Section \ref{section:ineq_solution}, we discuss how to solve the (weighted) least squares step in Algorithm \ref{alg:algo1} in a computationally efficient manner. 
Finally, Section \ref{sec:proof_technical_lemmas} contains, for completeness, a proof of two technical results, namely Lemma \ref{lemma:IRLS:basicproperties} and Proposition \ref{prop:DDFG:supportcondition}.

\etocdepthtag.toc{mtappendix}
\etocsettagdepth{mtchapter}{none}
\etocsettagdepth{mtappendix}{subsection}
\tableofcontents

\section{Linear Convergence of IRLS for approximately sparse vectors} \label{app:approxsparse}
We now generalize Theorem \ref{mainresult:sparse} to the scenario where the ground truth $x_{\ast}$ is only approximately sparse. 
By that, we mean that the vector $x_{\ast}$ can be well-approximated by an $s$-sparse vector in the sense that the $\ell_1$-error of the best $s$-term approximation $\sigma_s(x_{\ast})_{\ell_1}=\inf \{\Vert x_{\ast}-z \Vert_1: \ z \in \mathbb{R}^N \ \text{is $s$-sparse} \}$ is small, which is a commonly used quantity to measure the model misfit to a sparse vector \cite[Section 2.1]{FoucartRauhut13}.
If $x_{\ast}$ is approximately sparse in this sense, we can only hope to \emph{approximately} recover $x_{\ast}$ by the $\ell_1$-minimization program \eqref{basis_pursuit}. Indeed, \cite[Theorem 5.3(iv)]{Daubechies10} showed that under a suitable null space property, IRLS for \eqref{basis_pursuit} finds a vector $x$, such that $\onenorm{x-x_{\ast}} $ is at most a constant multiple of the optimal best $s$-term approximation error $\sigma_s(x_{\ast})_{\ell_1}$. However, as for exactly sparse vectors $x_{\ast}$, only a local, but no global convergence rate was provided in previous literature \cite[Theorem 6.4]{Daubechies10}.

The following result shows that in fact, we also can obtain global linear convergence of \Cref{alg:algo1} in this case. More precisely, \Cref{mainresult:approximatesparse} implies that $ \mathcal{J}_{\varepsilon_{k}}(x^{k}) -  \onenorm{x_{*}}$ decays exponentially fast until a certain accuracy is reached, which $\sigmaxstar$ up to a constant multiple.
\begin{theorem}\label{mainresult:approximatesparse}
Consider the problem of recovering an unknown vector $ x_{*} \in \R^N $ from known measurements of the form $y=Ax_{*} $. 
 Assume that the measurement matrix $A \in \mathbb{R}^{m \times N}$ fulfills the $\ell_1$-NSP of order $s$ with constant $\rho_s < 1/8$. Let the IRLS iterates  $\left\{    x^{k} \right\}_k$ and $ \left\{ \varepsilon_{k} \right\}_k$ be defined by \eqref{step_1} and \eqref{step_2} with initialization $x^0$. Then the following three statements hold.
\begin{enumerate}
\item  For  $k  \le  \hat{k}:=  \min \left\{ k \in \mathbb{N} :  \sigmaxstar> \frac{2}{9}\left\|\left(x_{*}  -x^{k} \right)_{S^c}\right\|_1   \right\}$ it holds that
\begin{equation} \label{equ:linearconvergence2} 
\mathcal{J}_{\varepsilon_{k}}(x^{k}) -  \onenorm{x_{*}} \le \left(  1-  \frac{c}{\rho_1 N}   \right)^k \left(     \mathcal{J}_{\varepsilon_{0}}(x^0) -  \onenorm{x_{*}} \right),
\end{equation}
where $S$ denotes the support of the $s$ largest entries of $x_{\ast}$.
\item For all $1\le k \le \hat{k} $ it holds that
\begin{equation}\label{ineq:approxsparse1}
\onenorm{x^k -x_{*}}    \le  6 \left(  1-  \frac{c}{\rho_1 N}   \right)^k  \onenorm{x^0 -x_{*}} + 10  \sigmaxstar.
\end{equation}
\item Moreover, for all integers $ k \gtrsim \rho_1 N  \log \left(   \frac{ \onenorm{x^0 -x_{*}} }{  \sigmaxstar }    \right)  $ we have that
\begin{equation}\label{ineq:approxsparse2}
	\onenorm{x^k -x_{*}}    \le 20  \sigmaxstar .
\end{equation}

\end{enumerate}
Here $ c=1/3072$ and $\rho_1 < 1/8$ denotes the constant for the $\ell_1$-NSP of order $1$.
\end{theorem}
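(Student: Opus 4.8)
The plan is to re-run the argument behind \Cref{mainresult:sparse}, but to carry the best-$s$-term error $\sigmaxstar$ through each estimate as an additive perturbation. Throughout I write $v^k := x_{*} - x^k$; since both $x^k$ and $x_{*}$ are feasible, $v^k \in \ker(A)$, so the $\ell_1$-NSP of \Cref{def:NSP:statement} applies to it and yields $\onenorm{v^k_S} \le \rho_s \onenorm{v^k_{S^c}}$, with $S$ the support of the $s$ largest entries of $x_{*}$. I set $W_k := \diag(w_{\varepsilon_k}(x^k))$ and track the surrogate gap $\Delta_k := \mathcal{J}_{\varepsilon_k}(x^k) - \onenorm{x_{*}}$. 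Everything then reduces to (i) a one-step contraction $\Delta_{k+1} \le (1 - c/(\rho_1 N))\Delta_k$ valid for $k < \hat k$, and (ii) a two-sided comparison between $\onenorm{x^k - x_{*}}$, the gap $\Delta_k$, and $\sigmaxstar$.

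\textbf{Step 1 (one-step descent, giving \eqref{equ:linearconvergence2}).} Because $x^{k+1}$ minimizes $Q_{\varepsilon_k}(\cdot, x^k)$ over the feasible set and $x^k + t v^k$ is feasible for every $t \in [0,1]$, the majorization property (\Cref{lemma:IRLS:basicproperties}) gives $\mathcal{J}_{\varepsilon_k}(x^{k+1}) \le Q_{\varepsilon_k}(x^{k+1}, x^k) \le Q_{\varepsilon_k}(x^k + t v^k, x^k)$. Using $\nabla \mathcal{J}_{\varepsilon_k}(x^k) = W_k x^k$ and expanding,
\[
Q_{\varepsilon_k}(x^k + t v^k, x^k) = \mathcal{J}_{\varepsilon_k}(x^k) + t \langle \nabla \mathcal{J}_{\varepsilon_k}(x^k), v^k \rangle + \tfrac{t^2}{2} \langle v^k, W_k v^k \rangle.
\]
For the linear term I would invoke convexity of $\mathcal{J}_{\varepsilon_k}$ to obtain $\langle \nabla \mathcal{J}_{\varepsilon_k}(x^k), v^k\rangle \le \mathcal{J}_{\varepsilon_k}(x_{*}) - \mathcal{J}_{\varepsilon_k}(x^k) \le -\Delta_k + N\varepsilon_k$, and then control $N\varepsilon_k$ through the smoothing rule \eqref{step_2} together with the bound $\mathcal{J}_{\varepsilon}(z) \le \onenorm{z} + N\varepsilon$. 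For the quadratic term I would bound the curvature $\langle v^k, W_k v^k\rangle$ coordinate-by-coordinate, splitting $[N]$ according to whether $|v^k_i|$ is large or small relative to $\max(|x^k_i|, \varepsilon_k)$, and then fold $\onenorm{v^k_S}$ into $\onenorm{v^k_{S^c}}$ via the NSP; this is exactly where the order-$1$ constant $\rho_1$ and the factor $N$ (through $\varepsilon_k \lesssim \sigma_s(x^{k})_{\ell_1}/N$) enter. Optimizing over $t$ converts the two bounds into a per-step decrease of order $\Delta_k/(\rho_1 N)$. The genuinely new feature compared with \Cref{mainresult:sparse} is that $(x_{*})_{S^c} \neq 0$ produces residual terms proportional to $\sigmaxstar$; the defining inequality of $\hat k$, namely $\sigmaxstar \le \tfrac{2}{9} \onenorm{(x_{*}-x^k)_{S^c}}$ for $k < \hat k$, guarantees these residuals are a small fraction of the quantity driving the descent, and it is precisely to leave room for them that the stronger requirement $\rho_s < 1/8$ (rather than $\rho_s < 1/2$) and the smaller constant $c = 1/3072$ are imposed. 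Iterating the contraction from $k=0$ gives \eqref{equ:linearconvergence2}.

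\textbf{Steps 2 and 3 (from the gap to the $\ell_1$-error, \eqref{ineq:approxsparse1}–\eqref{ineq:approxsparse2}).} To pass from $\Delta_k$ to $\onenorm{x^k - x_{*}}$ I would again use the NSP on $v^k$: it yields $\onenorm{v^k} \le (1+\rho_s)\onenorm{v^k_{S^c}}$, and $\onenorm{v^k_{S^c}}$ is in turn controlled by $\Delta_k$ up to an additive multiple of $\sigmaxstar$ coming from the off-support mass of $x_{*}$; combined with $\onenorm{x^0 - x_{*}} \lesssim \Delta_0$ at initialization (via $\onenorm{z} \le \mathcal{J}_{\varepsilon}(z) \le \onenorm{z} + N\varepsilon$), this delivers \eqref{ineq:approxsparse1} with the stated constants $6$ and $10$. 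For \eqref{ineq:approxsparse2} I would first note that once $k \gtrsim \rho_1 N \log(\onenorm{x^0 - x_{*}}/\sigmaxstar)$ the geometric term in \eqref{ineq:approxsparse1} has dropped below $\sigmaxstar$, so the bound already holds for every such $k$ up to $\hat k$; at $k = \hat k$ itself the definition of $\hat k$ forces $\onenorm{(x_{*}-x^{\hat k})_{S^c}} < \tfrac{9}{2} \sigmaxstar$, whence the NSP gives $\onenorm{x^{\hat k} - x_{*}} = O(\sigmaxstar)$; finally a monotonicity argument based on the non-increase of $\{\mathcal{J}_{\varepsilon_k}(x^k)\}$ in \eqref{eq:J:monotonicity:1} shows the iterates cannot later escape this $O(\sigmaxstar)$-basin, closing \eqref{ineq:approxsparse2}.

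\textbf{Main obstacle.} The crux is the curvature estimate on $\langle v^k, W_k v^k\rangle$ in Step 1: the weighted $\ell_2$ quantity must be bounded by $\rho_1 N$ times the gap while all the cross terms created by $(x_{*})_{S^c}$ are simultaneously absorbed into the $\sigmaxstar$ budget permitted by $k < \hat k$. Making the coordinate split and the NSP bookkeeping close with the stated constants—so that the contraction factor is uniform and independent of $k$—is the delicate part; the translation Steps 2 and 3 are then comparatively routine perturbations of the exactly sparse argument.
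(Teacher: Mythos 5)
Your overall skeleton---majorize with $Q_{\varepsilon_k}(\cdot,x^k)$, bound the linear and quadratic terms in $t$, optimize over $t$, and then translate between the surrogate gap $\Delta_k$ and $\onenorm{x^k-x_{*}}$ via a two-sided comparison lemma---is exactly the paper's skeleton (its \Cref{lemma:epscontrol} and \Cref{thm:p1:linearrate}), and your Steps 2--3, including the case split around $\hat k$ and the monotonicity argument that the iterates cannot escape the $O(\sigmaxstar)$-basin after $\hat k$, match the paper's proof in outline. The genuine gap is in Step 1: bounding the linear term by convexity, $\langle \nabla\mathcal{J}_{\varepsilon_k}(x^k), v^k\rangle \le \mathcal{J}_{\varepsilon_k}(x_{*}) - \mathcal{J}_{\varepsilon_k}(x^k)$, is too lossy to close under the hypotheses of \emph{this} theorem. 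The reason is quantitative: for a coordinate $i$ with $(x_{*})_i=0$ and $|x^k_i|\le\varepsilon_k$, the true gradient term is $-(x^k_i)^2/\varepsilon_k$, whereas convexity only retains $j_{\varepsilon_k}(0)-j_{\varepsilon_k}(x^k_i) = -(x^k_i)^2/(2\varepsilon_k)$, i.e.\ exactly half of it; equivalently, evaluating $\mathcal{J}_{\varepsilon_k}$ at $x_{*}$ charges $\varepsilon_k/2$ for every zero coordinate, so $\mathcal{J}_{\varepsilon_k}(x_{*})-\onenorm{x_{*}}\approx N\varepsilon_k/2$ is not a loose estimate but essentially an identity. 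Since the only control on $N\varepsilon_k$ coming from \eqref{step_2} is $N\varepsilon_k\le\sigma_s(x^k)_{\ell_1}\le \onenorm{v^k_{S^c}}+\sigmaxstar$, and the best available lower bound is $\onenorm{x^k}-\onenorm{x_{*}}\ge (1-\rho_s)\onenorm{v^k_{S^c}}-2\sigmaxstar$, your route yields at best
\begin{equation*}
\langle \nabla\mathcal{J}_{\varepsilon_k}(x^k), v^k\rangle \;\le\; -\left(\tfrac{1}{2}-\rho_s\right)\onenorm{v^k_{S^c}} + \tfrac{5}{2}\,\sigmaxstar ,
\end{equation*}
and for $k<\hat k$ the theorem only guarantees $\sigmaxstar\le\tfrac{2}{9}\onenorm{v^k_{S^c}}$, so this right-hand side can be as large as $\left(\rho_s+\tfrac{1}{18}\right)\onenorm{v^k_{S^c}}>0$: no descent follows. (Your cruder intermediate bound $-\Delta_k+N\varepsilon_k$ is worse still, since \Cref{lemma:epscontrol} forces $N\varepsilon_k\ge\tfrac{1}{3}\Delta_k$ while nothing forces $N\varepsilon_k<\Delta_k$.)

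The paper avoids this loss by never passing through $\mathcal{J}_{\varepsilon_k}(x_{*})$: in Part 1 of the proof of \Cref{thm:p1:linearrate} the inner product is computed coordinatewise over $S$, $S^c$ and $I=\{i:|x^k_i|>\varepsilon_k\}$, $I^c$, the full negative term $-\twonorm{x^k_{S^c\cap I^c}}^2/\varepsilon_k$ is kept, and the elementary bound $a-a^2/\varepsilon_k\le\varepsilon_k/4$ per coordinate reduces the smoothing penalty to $N\varepsilon_k/4$ rather than $N\varepsilon_k/2$. That factor of two is precisely what the threshold $\tfrac{2}{9}$ consumes: the direct computation gives $\langle\nabla\mathcal{J}_{\varepsilon_k}(x^k), v^k\rangle \le \tfrac{9}{4}\sigmaxstar-\left(\tfrac{3}{4}-\rho_s\right)\onenorm{v^k_{S^c}} \le -\left(\tfrac{1}{4}-\rho_s\right)\onenorm{v^k_{S^c}}$, which is negative for $\rho_s<1/4$ and feeds the optimization over $t$ with the linear coefficient the rest of the proof needs. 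Note that your convexity route would suffice for \Cref{mainresult:sparse}, where $\sigmaxstar=0$ and the weaker constant $\tfrac{1}{2}-\rho_s$ is still usable; it breaks precisely in the approximately sparse case this theorem addresses, because the constant $\tfrac{2}{9}$ is hard-wired into the definition of $\hat k$ in the statement and cannot be shrunk. To repair Step 1 you must replace the convexity bound by the direct coordinatewise splitting; your Steps 2 and 3 then go through as sketched.
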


\begin{remark}
Applying \Cref{mainresult:approximatesparse} to the special case $ \sigmaxstar=0 $, we observe that inequality \eqref{ineq:globalconvergenerate1} yields a seemingly sharper result than inequality \eqref{ineq:approxsparse1} in \Cref{mainresult:sparse}, which may seem somewhat counterintuitive.
However, note that in \Cref{mainresult:sparse} we require $\rho_s < 1/2 $, whereas in \Cref{mainresult:approximatesparse} we have the stronger assumption $ \rho_s < 1/8 $. Indeed, a closer inspection of the proofs reveals that both the factors $3$ and $6$  in the inequalities \eqref{ineq:globalconvergenerate1} and \eqref{ineq:approxsparse1} can be replaced by the factor $ \frac{3(1+\rho_s)}{1-\rho_s}$, reconciling those two results.
\end{remark}

\section{Proofs of the main results}\label{section:proofs}
In this section, we prove the main results of this paper, \Cref{mainresult:sparse}, and its approximately sparse counterpart, \Cref{mainresult:approximatesparse}. To this end, we first state and prove the following technical lemma, which gives an upper and lower bound for $\mathcal{J}_{\varepsilon} \left(x\right) - \onenorm{x_{*}}$, which is the quantity for which we are going to show linear convergence.
\begin{lemma}\label{lemma:epscontrol} Let $x_{*},  x \in \mathbb{R}^N$. Assume that $A$ fulfills the $\ell_1$-NSP of order $s$ with constant $\rho_s <1$. Furthermore, suppose $Ax_{*}= Ax$ and that $\varepsilon \le \frac{1}{N} \sigma_s \left( x \right)_{\ell_1}$. Then it holds that
	\begin{equation}\label{ineq:aux1}
	\frac{1-\rhos}{1+\rhos}  \onenorm{x -x_{*}}  -  2  \sigmaxstar \le 
	\mathcal{J}_{\varepsilon} \left(x\right) - \onenorm{x_{*}} \le   3 \sigma_s \left(x\right)_{\ell_1} .
	\end{equation}
\end{lemma}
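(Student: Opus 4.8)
The plan is to reduce the entire statement to a two-sided comparison of $\onenorm{x}$ with $\onenorm{x_{*}}$, and then to apply the $\ell_1$-NSP to the null vector $v := x - x_{*} \in \ker(A)$ twice, with two different choices of support set. First I would record the elementary sandwiching of the smoothed objective: since $|x_i| \le j_{\varepsilon}(x_i) \le |x_i| + \varepsilon$ holds entrywise (as noted after \eqref{eq:smoothedell1:objective}), summing over $i$ gives $\onenorm{x} \le \mathcal{J}_{\varepsilon}(x) \le \onenorm{x} + N\varepsilon$. Using the hypothesis $\varepsilon \le \frac{1}{N}\sigma_s(x)_{\ell_1}$ to bound $N\varepsilon \le \sigma_s(x)_{\ell_1}$, this becomes $\onenorm{x} \le \mathcal{J}_{\varepsilon}(x) \le \onenorm{x} + \sigma_s(x)_{\ell_1}$. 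Thus both bounds in \eqref{ineq:aux1} will follow from suitable control of $\onenorm{x} - \onenorm{x_{*}}$.

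For the upper bound, I would let $T \subset [N]$ index the $s$ largest-magnitude entries of $x$, so that $\onenorm{x_{T^c}} = \sigma_s(x)_{\ell_1}$ and $\onenorm{x_T} = \onenorm{x} - \sigma_s(x)_{\ell_1}$. Writing $x_{*} = x - v$ and splitting over $T$ and $T^c$ with the reverse triangle inequality yields $\onenorm{x_{*}} \ge \onenorm{x_T} - \onenorm{v_T} + \onenorm{v_{T^c}} - \onenorm{x_{T^c}}$. Since $|T| = s$, the NSP gives $\onenorm{v_T} \le \rho_s \onenorm{v_{T^c}} \le \onenorm{v_{T^c}}$, so the $v$-terms are nonnegative and $\onenorm{x_{*}} \ge \onenorm{x} - 2\sigma_s(x)_{\ell_1}$. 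Combined with the sandwich, $\mathcal{J}_{\varepsilon}(x) - \onenorm{x_{*}} \le \onenorm{x} + \sigma_s(x)_{\ell_1} - \onenorm{x_{*}} \le 3\sigma_s(x)_{\ell_1}$.

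For the lower bound, I would instead let $S \subset [N]$ index the $s$ largest-magnitude entries of $x_{*}$, so that $\onenorm{(x_{*})_{S^c}} = \sigmaxstar$. Splitting $\onenorm{x} = \onenorm{x_{*} + v}$ over $S$ and $S^c$ and again using the reverse triangle inequality together with $\onenorm{(x_{*})_S} = \onenorm{x_{*}} - \sigmaxstar$ gives $\onenorm{x} - \onenorm{x_{*}} \ge \left( \onenorm{v_{S^c}} - \onenorm{v_S} \right) - 2\sigmaxstar$. The NSP now enters in two ways: $\onenorm{v_S} \le \rho_s \onenorm{v_{S^c}}$ gives $\onenorm{v_{S^c}} - \onenorm{v_S} \ge (1-\rho_s)\onenorm{v_{S^c}}$, while $\onenorm{v} = \onenorm{v_S} + \onenorm{v_{S^c}} \le (1+\rho_s)\onenorm{v_{S^c}}$ gives $\onenorm{v_{S^c}} \ge \onenorm{v}/(1+\rho_s)$. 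Combining these yields $\onenorm{x} - \onenorm{x_{*}} \ge \frac{1-\rho_s}{1+\rho_s}\onenorm{x - x_{*}} - 2\sigmaxstar$, and since $\mathcal{J}_{\varepsilon}(x) \ge \onenorm{x}$ this is precisely the left inequality of \eqref{ineq:aux1}.

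The argument is essentially bookkeeping with the triangle inequality and the NSP, so I do not anticipate a serious obstacle. The only points requiring care are to select the correct best $s$-term support for each direction (the support of $x$ for the upper bound versus that of $x_{*}$ for the lower bound) and, in the lower bound, to convert the partial norm $\onenorm{v_{S^c}}$ into the full norm $\onenorm{v}$ through the NSP so that the sharp constant $\frac{1-\rho_s}{1+\rho_s}$ emerges.
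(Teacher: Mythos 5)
Your proof is correct. The lower bound follows essentially the same route as the paper's: split over a cardinality-$s$ set, apply the reverse triangle inequality twice, and use the NSP both to absorb $\onenorm{v_S}$ and to convert $\onenorm{v_{S^c}}$ into $\onenorm{v}/(1+\rho_s)$. One detail in your favor: you take $S$ to be the $s$ largest entries of $x_{*}$, which makes $\onenorm{(x_{*})_{S^c}} = \sigmaxstar$ exact, whereas the paper's proof text takes $S$ to be the $s$ largest entries of $x$, for which one only has $\onenorm{(x_{*})_{S^c}} \geq \sigmaxstar$ and the stated bound would not follow; this appears to be a typo in the paper that your choice silently fixes. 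For the upper bound you genuinely diverge: the paper splits $\mathcal{J}_{\varepsilon}$ over $I = \{i : |x_i| > \varepsilon\}$ and then controls $\onenorm{x} - \onenorm{x_{*}}$ by invoking the external result \Cref{lemma:NSPl1min} (\cite[Lemma 4.3]{Daubechies10}) via an algebraic rearrangement, obtaining $(1+\rho_s)\sigma_s(x)_{\ell_1}$, whereas you prove the needed bound $\onenorm{x} - \onenorm{x_{*}} \le 2\sigma_s(x)_{\ell_1}$ directly from the reverse triangle inequality over the best $s$-term support $T$ of $x$ together with the NSP applied to $v_T$ --- effectively inlining a proof of the relevant special case of that lemma. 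Your entrywise sandwich $\onenorm{x} \le \mathcal{J}_{\varepsilon}(x) \le \onenorm{x} + N\varepsilon$ likewise replaces the paper's $I$/$I^c$ computation with an equivalent but cleaner step. The paper's route buys a marginally sharper intermediate constant, which is immaterial since both arguments deliver the stated factor $3$; your route buys a self-contained proof that never leaves the lemma.
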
 
In order to prove \Cref{lemma:epscontrol} we need the following technical lemma.
\begin{lemma}\label{lemma:NSPl1min}\cite[Lemma 4.3]{Daubechies10}
	Assume that the matrix $A \in \mathbb{R}^{m \times N} $ fulfills the $\ell_1$-NSP for some $s$ and $\rho_s<1$. Then  for all $x, x_{*} \in \mathbb{R}^N$ such that $Ax = Ax_{*} $ it holds that
	
	\[
	\label{ball17}
	\|x-x_{*}\|_{\ell_1} 
	\leqslant \frac{1+\rho_s}{1-\rho_s}\left(\|x_{*}\|_1 - \|x\|_1 + 2 \sigma_s(z)_{\ell_1}\right).
	\]
	
\end{lemma}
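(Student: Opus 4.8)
This is the classical statement that the stable $\ell_1$-NSP implies a robust $\ell_1$-recovery estimate, so the plan is to run the standard null-space argument; the only care needed is to route the best $s$-term error to the correct vector (the $z$ in the displayed bound should be read as $x$, i.e.\ the vector whose $\ell_1$-norm enters with a minus sign). First I would set $v := x - x_{*}$ and record that, since $Ax = Ax_{*}$, the difference lies in the null space, $v \in \ker(A)$, so that the NSP of \Cref{def:NSP:statement} is available for $v$. The quantity to be bounded is exactly $\onenorm{v} = \onenorm{x-x_{*}}$.

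Next I would pick $S \subset [N]$ to be the index set of the $s$ largest-magnitude entries of $x$, so that $\onenorm{x_{S^c}} = \sigma_s(x)_{\ell_1}$ and $\onenorm{x_S} = \onenorm{x} - \sigma_s(x)_{\ell_1}$. Writing $x_{*} = x + v$ and applying the triangle inequality block by block gives $\onenorm{(x_{*})_S} \ge \onenorm{x_S} - \onenorm{v_S}$ and $\onenorm{(x_{*})_{S^c}} \ge \onenorm{v_{S^c}} - \onenorm{x_{S^c}}$. Summing these and substituting $\onenorm{x_S} = \onenorm{x} - \sigma_s(x)_{\ell_1}$ together with $\onenorm{x_{S^c}} = \sigma_s(x)_{\ell_1}$ yields, after rearrangement,
\begin{equation*}
\onenorm{v_{S^c}} - \onenorm{v_S} \le \onenorm{x_{*}} - \onenorm{x} + 2\,\sigma_s(x)_{\ell_1}.
\end{equation*}
At this point I would invoke the $\ell_1$-NSP, $\onenorm{v_S} \le \rhos \onenorm{v_{S^c}}$, which lower-bounds the left-hand side by $(1-\rhos)\onenorm{v_{S^c}}$, and hence
\begin{equation*}
\onenorm{v_{S^c}} \le \frac{1}{1-\rhos}\bigl(\onenorm{x_{*}} - \onenorm{x} + 2\,\sigma_s(x)_{\ell_1}\bigr).
\end{equation*}
Finally, using $\onenorm{v} = \onenorm{v_S} + \onenorm{v_{S^c}} \le (1+\rhos)\onenorm{v_{S^c}}$ (again by the NSP) produces the claimed prefactor $\frac{1+\rhos}{1-\rhos}$.

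There is no genuine analytic obstacle here: the estimate is purely triangle-inequality bookkeeping combined with a single application of the NSP. The only place where care is required --- and the step I would double-check most carefully --- is the choice of the support set $S$ and the resulting placement of the $\sigma_s$ term, since the bound is sensitive to which of $x$, $x_{*}$ carries the minus sign; selecting $S$ as the top-$s$ support of the vector appearing with $-\onenorm{\cdot}$ (here $x$) is exactly what makes the two $\sigma_s(x)_{\ell_1}$ contributions line up. Since the statement holds for all $x, x_{*}$ with $Ax = Ax_{*}$, the companion inequality needed later (with $\sigma_s(x_{*})_{\ell_1}$ and the roles of $x$ and $x_{*}$ exchanged) follows immediately by relabeling, which is the form that feeds into the lower bound of \Cref{lemma:epscontrol}.
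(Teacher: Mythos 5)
Your proof is correct: the paper itself gives no proof of this lemma, importing it verbatim as \cite[Lemma 4.3]{Daubechies10}, and your argument is exactly the standard null-space argument used there (split $v = x - x_{*} \in \ker(A)$ over the top-$s$ support $S$ of $x$, apply the triangle inequality blockwise, then use the NSP twice). You also correctly diagnosed and resolved the typo in the statement --- the undefined $\sigma_s(z)_{\ell_1}$ must be read as $\sigma_s(x)_{\ell_1}$, with $S$ the top-$s$ support of the vector carrying the minus sign --- which is consistent with how the lemma is invoked in the proof of \Cref{lemma:epscontrol}.
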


\begin{proof}[Proof of \Cref{lemma:epscontrol}]
	We observe that $ \mathcal{J}_{\varepsilon} \left(x\right) \ge \onenorm{x}$ for each $x \in \R^{N}$, which follows directly from the definition of $ \mathcal{J}_{\varepsilon} \left(x\right) $, see \Cref{eq:smoothedell1:objective}. Now, fix $x_{*}, x \in \R^{N}$, and let $S$ be the set which contains the $s$ largest entries of $x$ in absolute value. Hence, we obtain that
	\begin{align*}
	\mathcal{J}_{\varepsilon} \left(x\right) - \onenorm{x_{*}} &\ge \onenorm{x} - \onenorm{x_{*}} \\
	&= \onenorm{x_{S^c}} + \onenorm{x_{S}}   - \onenorm{x_{*}} \\
	&\ge \onenorm{x_{S^c}} - \onenorm{\left(x-x_{*}\right)_S}-\onenorm{ \left( x_{*} \right)_{S^c} }\\
	&\ge \onenorm{\left( x-x_{*}  \right)_{S^c}} - \onenorm{\left(x-x_{*}\right)_S}-  2 \onenorm{ \left( x_{*} \right)_{S^c} },
	\end{align*} 
		where in each of the last two inequalities we have applied the reverse triangle inequality. 
	Since $x-x_{*}$ is contained in the null space of $A$, it follows from the nullspace property that $\onenorm{\left(x-x_{*}\right)_S} \le \rhos \onenorm{\left(x-x_{*}\right)_{S^c}}$. Hence, we have shown that
	\begin{align*}
	\mathcal{J}_{\varepsilon} \left(x\right) - \onenorm{x_{*}} &\ge \left(1-\rhos\right) \onenorm{\left( x-x_{*}  \right)_{S^c}} -    2 \onenorm{ \left( x_{*} \right)_{S^c} }.
	\end{align*}
Since it follows from the null space property that $ \onenorm{\left( x-x_{*}  \right)_{S^c}} \ge \frac{\onenorm{x -x_{*}}}{1+\rhos} $, this shows the first inequality in \eqref{ineq:aux1}.
	
	Next, we are going to prove the reverse inequality in \eqref{ineq:aux1}.  For that, set $I := \{i \in [N]: |x_i| > \varepsilon\}$. Then we observe that
	\begin{equation} \label{eq:Jupperbd}
	\begin{split}
	\mathcal{J}_{ \varepsilon} \left(x\right)  -\onenorm{x_{*}} &= \onenorm{x_I} + \frac{1}{2} \sum_{i \in I^c} \left( \frac{x_i^2}{\varepsilon}    + \varepsilon  \right)  -\onenorm{x_{*}}\\
	&\le \onenorm{x_I} +  \vert I^c \vert \varepsilon -\onenorm{x_{*}} \\
	&\le  \onenorm{x_I}  +  \sigma_s \left( x \right)_{\ell_1} -\onenorm{x_{*}}\\
	&\le  \onenorm{x}  +  \sigma_s \left( x \right)_{\ell_1}-\onenorm{x_{*}}.
	\end{split}
	\end{equation}
	In the third line we used the  assumption $\varepsilon \le \frac{1}{N} \sigma_s \left( x \right)_{\ell_1}$.
	In order to proceed, we first derive an appropriate upper bound for $ \onenorm{x}  -\onenorm{x_{*}} $. For that, we note
	\begin{align*}
	\left(    \frac{1-\rhos}{1+\rhos}+1\right)\left( \onenorm{x} -\onenorm{x_{*}} \right)   &\leq \frac{1-\rhos}{1+\rhos}\|x-x_{*}\|_1- \left(\onenorm{x_{*}}  - \onenorm{x} \right) \\
	&\leq  \left(\onenorm{x_{*}}  - \onenorm{x}  + 2 \sigma_s \left(x\right)_{\ell_1}  \right) - \left(\onenorm{x_{*}}  - \onenorm{x} \right) \\
	&\leq 2 \sigma_s \left(x\right)_{\ell_1} ,
	\end{align*}
	where in the second line we have used \Cref{lemma:NSPl1min}. This shows that $\onenorm{x} -\onenorm{x_{*}}  \le \frac{2 \sigma_s \left(x\right)_{\ell_1} }{1+ \frac{1-\rhos}{1+\rhos}} $. Combining this with \eqref{eq:Jupperbd}, we obtain
	\begin{align*}
	\mathcal{J}_{\varepsilon} \left(x\right)  - \onenorm{x_{*}}
	& \le  3 \sigma_s \left(x\right)_{\ell_1} ,
	\end{align*}
	which finishes the proof of inequality \eqref{ineq:aux1}.
\end{proof}
The next key proposition states that the quantity $\mathcal{J}_{\varepsilon_k} \left(x^k\right) - \onenorm{x_{*}} $ decays linearly under appropriate conditions.
\begin{proposition}\label{thm:p1:linearrate}
Let $x_{*} \in \R^N$ be an approximately $s$-sparse vector with support $S$. Let $A \in \R^{m \times N}$ and $y=Ax_{*}$. Assume that $A$ fulfills the $\ell_1$-NSP of order $s$ with constant $ \rho_s < 3/4$, if  $\sigmaxstar =0  $, and $ \rho_s < 1/4 $ otherwise. Denote by $\rho_1 $ the NSP constant of order 1.\\
Let the IRLS iterates  $\left\{    x^{k} \right\}_k$ and $ \left\{ \varepsilon_{k} \right\}_k$ be defined by \eqref{step_1} and \eqref{step_2} with initialization $x^0$.
Then, for all $k \in \mathbb{N}$, such that $ \onenorm{ \left( x_{*} \right)_{S^c}   } \le  \frac{2}{9}\onenorm{\left(x_{*}\right)_{S^c} -x^{\ell}_{S^c}} $ for all  $ \ell < k $, the following holds
\begin{equation} \label{eq:IRLS:suffdecrease}
\mathcal{J}_{\varepsilon_{k}}(x^{k}) -  \onenorm{x_{*}} \le \left(  1-  \frac{c_{\rhos}}{\rho_1 N}   \right)^k \left(     \mathcal{J}_{\varepsilon_{0}}(x^0) -  \onenorm{x_{*}} \right). 
	\end{equation}
where the constant $c_{\rhos}$ is defined by 
\begin{equation*}
c_{\rhos}:=
\begin{cases}
  \frac{(3/4-\rhos)^2}{48}  \quad & \text{if  } \sigmaxstar =0 \\
    \frac{(1/4-\rhos)^2}{48} \quad & \text{else}
 \end{cases}
\end{equation*}

\end{proposition}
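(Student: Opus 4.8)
The plan is to control the one-step decrease of $D_k := \mathcal{J}_{\varepsilon_k}(x^k) - \onenorm{x_*}$ by testing the majorizer $Q_{\varepsilon_k}(\cdot, x^k)$ along the segment pointing toward the ground truth. Since $x^{k+1}$ minimizes $Q_{\varepsilon_k}(\cdot, x^k)$ over $\{z : Az = y\}$ (the only $z$-dependent part of \eqref{eq:smoothedell1:IRLSmajorizer} is the weighted quadratic minimized in \eqref{step_1}) and every point $x^k + t(x_* - x^k)$ lies in this affine set because $x_* - x^k \in \ker A$ for $k\ge 1$, the monotonicity \eqref{eq:J:monotonicity:1} together with \Cref{lemma:IRLS:basicproperties} gives, for every $t \in [0,1]$,
\[
D_{k+1} \le Q_{\varepsilon_k}(x^k + t v^k, x^k) - \onenorm{x_*}, \qquad v^k := x_* - x^k .
\]
Expanding via \eqref{eq:smoothedell1:IRLSmajorizer} yields $D_{k+1} - D_k \le t\,L_k + \tfrac{t^2}{2}Q_k$ with $L_k := \langle \nabla\mathcal{J}_{\varepsilon_k}(x^k), v^k\rangle$ and $Q_k := \langle v^k, \diag(w_k) v^k\rangle$. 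Minimizing this scalar quadratic over $t \in [0,1]$ shows the decrease is at least $\min\{-L_k/2,\ L_k^2/(2Q_k)\}$ whenever $L_k < 0$, so the whole argument reduces to a lower bound $-L_k \gtrsim D_k$ and an upper bound $Q_k \lesssim \rho_1 N D_k$.

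For the linear term I would split coordinates into the support $S$ of $x_*$ and its complement. On $S^c$ (where $x_*$ vanishes in the exactly sparse case), using $(\nabla\mathcal{J}_{\varepsilon_k}(x^k))_i = \sgn(x^k_i)\min(1,|x^k_i|/\varepsilon_k)$ and the elementary bound $|x| - x^2/\varepsilon \le \varepsilon/4$ for $|x|\le\varepsilon$, the contribution is at least $\onenorm{x^k_{S^c}} - N\varepsilon_k/4$; on $S$ it is at least $-\onenorm{v^k_S}$, and the $\ell_1$-NSP bounds $\onenorm{v^k_S} \le \rhos \onenorm{v^k_{S^c}} = \rhos \onenorm{x^k_{S^c}}$. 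Combining with $N\varepsilon_k \le \sigma_s(x^k)_{\ell_1} \le \onenorm{x^k_{S^c}}$ gives $-L_k \ge (\tfrac34 - \rhos)\onenorm{x^k_{S^c}}$; this is precisely where the threshold $\rho_s < 3/4$ enters, the sharper constant $\varepsilon/4$ (rather than $\varepsilon/2$) being essential. The upper bound in \Cref{lemma:epscontrol} then converts this into $-L_k \ge \tfrac{3/4-\rhos}{3}\,D_k$.

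The quadratic term is the main obstacle, since the weights $1/\max(|x^k_i|,\varepsilon_k)$ blow up as $\varepsilon_k \to 0$ and a naive estimate diverges. I would first invoke the NSP of order $1$: as $v^k \in \ker A$ we have $\Vert v^k\Vert_\infty \le \rho_1 \onenorm{v^k}$, hence $Q_k \le \varepsilon_k^{-1}\twonorm{v^k}^2 \le \rho_1 \varepsilon_k^{-1}\onenorm{v^k}^2$. The decisive point is a matching \emph{lower} bound on the smoothing parameter: since $D_j$ is non-increasing by \eqref{eq:J:monotonicity:1} and $\sigma_s(x^j)_{\ell_1} \ge D_j/3$ by the upper bound of \Cref{lemma:epscontrol}, the update \eqref{step_2} forces $\varepsilon_k = \min_{1\le j\le k}\sigma_s(x^j)_{\ell_1}/N \ge D_k/(3N)$. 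Together with $\onenorm{v^k} \le \tfrac{1+\rhos}{1-\rhos}D_k$ (lower bound of \Cref{lemma:epscontrol}) this gives $Q_k \le 3\rho_1 N \onenorm{v^k}^2/D_k \lesssim \rho_1 N D_k$. Substituting into $L_k^2/(2Q_k)$ produces a per-iteration factor $1 - c_{\rhos}/(\rho_1 N)$, and iterating yields \eqref{eq:IRLS:suffdecrease}.

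For the approximately sparse regime ($\sigmaxstar > 0$) I would rerun the same argument while carrying the $\sigmaxstar$ corrections present in both bounds of \Cref{lemma:epscontrol}; the hypothesis $\onenorm{(x_*)_{S^c}} \le \tfrac29\onenorm{(x_*)_{S^c} - x^\ell_{S^c}}$ for $\ell < k$ is exactly what keeps these corrections a small fraction of the descent, at the cost of tightening the threshold to $\rho_s < 1/4$. The remaining care is bookkeeping: feasibility $A x^k = A x_*$ holds only from $k=1$ onward, so the $k=0$ step is absorbed by monotonicity, and one must check the hypotheses $A x^k = A x_*$ and $\varepsilon_k \le \sigma_s(x^k)_{\ell_1}/N$ of \Cref{lemma:epscontrol} at each iterate before applying it.
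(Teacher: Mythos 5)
Your proposal is correct and follows essentially the same route as the paper's proof: the same descent of the majorizer along $x^k + t\,v^k$, the same $S$/$S^c$ split with the NSP for the linear term (including the sharp $\varepsilon/4$ bound that yields the thresholds $3/4$ and $1/4$), the same order-$1$ NSP bound $\Vert v^k\Vert_\infty \le \rho_1 \Vert v^k\Vert_1$ for the quadratic term, and the same key lower bound $\varepsilon_k \ge \bigl(\mathcal{J}_{\varepsilon_k}(x^k)-\onenorm{x_*}\bigr)/(3N)$ via \Cref{lemma:epscontrol} and monotonicity. The only immaterial differences are that you minimize the scalar quadratic over $t\in[0,1]$ and convert both $-L_k$ and $Q_k$ into multiples of $D_k$, whereas the paper takes the unconstrained minimizer $t=b/(2a)$, under which the $\Vert v^k_{S^c}\Vert_1^2$ factors cancel exactly and only the bound on $\varepsilon_k$ is needed, giving slightly cleaner constants.
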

Before proving this statement, let us describe the main ideas of our proof. 
Recall that $x_{*}$ has minimal $\Vert \cdot \Vert_1$-norm among all vectors $x$, which satisfy the constraint $Ax=y$. Hence, setting $v^k = x_{*} - x^k $ due to convexity of the $\ell_1$-norm we that $\onenorm{x^k + t v^k} < \onenorm{x^k}$ for all $0<t<1$.  
Since that the quadratic functional $	Q(\cdot, x^k)$ approximates the objective function $ \mathcal{J}_{\varepsilon} $, which is a surrogate for the $\ell_1$-norm, in a neighborhood of the current iterate $x^k $, we also expect that for $t>0$ sufficiently small we have that $Q(x^k + t v^k, x^k) < Q(x^k, x^k) $.
In order to show that the decrease is sufficiently large, we also need to show that $t$ can be chosen large enough.
This will guarantee a sufficient decrease of $\mathcal{J}_{\varepsilon_k} \left(x^k\right) $ in each iteration.

\begin{proof}[Proof of  \Cref{thm:p1:linearrate}]
In order to show inequality \eqref{eq:IRLS:suffdecrease} we will prove by induction that for each $k$,  such that $ \onenorm{ \left( x_{*} \right)_{S^c}   } \le  \frac{2}{9}\onenorm{\left(x_{*}\right)_{S^c} -x^{\ell}_{S^c}} $ for all  $ \ell \le  k $, it holds that
\begin{equation*}
 \mathcal{J}_{\varepsilon_{k+1}}(x^{k+1}) -  \onenorm{x_{*}}    \le \left( 1 - \frac{c_{\rhos}}{\rho_1 N}  \right) \left( \mathcal{J}_{\varepsilon_{k}}(x^{k}) -  \onenorm{x_{*}} \right).
\end{equation*}
Now choose such a $k \ge 1$ and assume that the statement has been shown for all $ k' <k$.
Set $ v^k = x_{*} - x^k $.
For $t \in \R$, we have, by optimality of $x^{k+1}$ in \eqref{step_1}, that
\begin{equation} \label{eq:mainidea}
 \mathcal{J}_{\varepsilon_{k+1}}(x^{k+1}) \leq Q_{\varepsilon_k} (x^{k+1},x^k) \leq Q_{\varepsilon_k}(x^k+t v^k,x^k).
\end{equation}
Moreover, by the definition of the quadratic objective $Q_{\varepsilon_k}(\cdot,x^k)$ (see \eqref{eq:smoothedell1:IRLSmajorizer}), it holds that
\begin{equation} \label{eq:objective:diff}
\begin{split}
Q_{\varepsilon_k}(x^k+t v^k,x^k) - \mathcal{J}_{\varepsilon_k}(x^k) &=  t \, \langle \nabla \mathcal{J}_{\varepsilon_k}(x^k), v^k\rangle + \frac{t^2}{2} \langle v^k, \diag(w(x^k,\varepsilon_k)) v^k \rangle.
\end{split}
\end{equation}	
Our goal is to show that by picking $t$ large enough, we can make $Q_{\varepsilon_k}(x^k+t v^k,x^k) - \mathcal{J}_{\varepsilon_k}(x^k)<0$ sufficiently small. For that, we now control $\langle \nabla \mathcal{J}_{\varepsilon_k}(x^k), v^k\rangle$ and $\langle v^k, \diag(w_{\varepsilon_k}(x^k)) v^k \rangle$ separately.\\

\noindent \textbf {Part 1: Bounding the linear term $\langle \nabla \mathcal{J}_{\varepsilon_k}(x^k), v^k\rangle$:} \label{sec:bound:linear}

Let $I := \{i \in [N]: |x^k_i| > \varepsilon_k\}$ and denote by $S$ the set which contains the $s$ largest entries  of $x_{*}$ in absolute value. In the case that $ x_{*} $ is sparse, $S$ is given by the support of $x_{*}$, i.e. $ S = \text{ supp} \left(x_{*}\right) $.  Consider

\begin{align*}
\langle \nabla \mathcal{J}_{\varepsilon_k}(x^k), v^k\rangle  &= \sum_{i=1}^N \frac{x_i^{k}}{\max(|x_i^{k}|,\varepsilon_k)} v^k_i = \sum_{i \in S} \frac{x_i^{k}}{\max(|x_i^{k}|,\varepsilon_k)} v^k_i + \sum_{i \in S^c} \frac{x_i^{k}}{\max(|x_i^{k}|,\varepsilon_k)} v^k_i.
\end{align*} 
The first summand can be bounded by
\begin{align*}
\sum_{i \in S} \frac{x_i^{k}}{\max(|x_i^{k}|,\varepsilon_k)} v^k_i &= \sum_{i \in S \cap I} \sgn(x_i^{(k)}) v^k_i + \sum_{i \in S \cap I^c} \frac{x_i^{k}}{\varepsilon_k} v^k_i  \\
&\leq \|v^k_{S \cap I}\|_1 + \|v^k_{S \cap I^c}\|_1 \\
&= \|v^k_S\|_1\\
&\leq  \rho_s\|v^k_{S^c}\|_1.
\end{align*}

Where the last inequality comes from the NSP for $v^k$ which, in turns, comes from the fact that $v^k=x_*-x^k \in \ker(A)$, since $Ax_*=Ax^k$. For the second summand we have that
\begin{align*}
&\sum_{i \in S^c} \frac{x_i^{k}}{\max(|x_i^{k}|,\varepsilon_k)} v^k_i \\
=& \sum_{i \in S^c \cap I} \text{sgn} \left(x_i^{k}\right)   v^k_i   + \sum_{i \in S^c \cap I^c} \frac{ x_i^{k} v^k_i}{\varepsilon_k} \\
=& \sum_{i \in S^c \cap I} \text{sgn} \left(x_i^{k}\right)   \left(x_{*}\right)_i  -   \sum_{i \in S^c \cap I} \text{sgn} \left(x_i^{k}\right)   x_i^{k}   +  \sum_{i \in S^c \cap I^c} \frac{ x_i^{k} \left(x_{*}\right)_i}{\varepsilon_k} - \sum_{i \in S^c \cap I^c} \frac{  (x_i^k)^2}{\varepsilon_k}  \\
\le&\onenorm{ \left(  x_{*} \right)_{S^c \cap I } }   - \onenorm{ x^k_{S^c \cap I} } +  \onenorm{ \left(  x_{*} \right)_{S^c \cap I^c } } - \frac{\twonorm{x^k_{S^c \cap I^c}}^2}{\varepsilon_k} \\
=&  - \onenorm{ x^k_{S^c \cap I} } +  \onenorm{ \left(  x_{*} \right)_{S^c}} - \frac{\twonorm{x^k_{S^c \cap I^c}}^2}{\varepsilon_k} \\
= &  \onenorm{ \left(  x_{*} \right)_{S^c}  } - \onenorm{ x^k_{S^c} } + \onenorm{ x^k_{S^c \cap I^c } }  - \frac{\twonorm{x^k_{S^c \cap I^c}}^2}{\varepsilon_k}\\
\le&  2 \onenorm{ \left(  x_{*} \right)_{S^c}  } - \onenorm{ v^k_{S^c } } + \onenorm{ x^k_{S^c \cap I^c } }  - \frac{\twonorm{x^k_{S^c \cap I^c}}^2}{\varepsilon_k}.
\end{align*}
In order to proceed, we note that from the elementary inequality $ab \le \frac{1}{2} \left( a^2 + b^2\right) $  and from $\onenorm{ x^k_{S^c \cap I^c}} \leq \sqrt{N} \twonorm{x^k_{S^c \cap I^c}}$, it follows that
\begin{align*}
\onenorm{ x^k_{S^c \cap I^c } } &\le \frac{1}{2} \left( \frac{\varepsilon_k  \onenorm{ x^k_{S^c \cap I^c } }^2}{2 \twonorm{x^k_{S^c \cap I^c}}^2}         +2  \frac{\twonorm{x^k_{S^c \cap I^c}}^2}{\varepsilon_k}. \right) \le  \frac{\varepsilon_k N}{4} + \frac{\twonorm{x^k_{S^c \cap I^c}}^2}{\varepsilon_k}.
\end{align*}
Hence, using that $\varepsilon_k \leq \sigma_s(x^k)_{\ell_1}/N$, we have shown that
\begin{align*}
\sum_{i \in S^c} \frac{x_i^{k}}{\max(|x^k_i|,\varepsilon_k)} v^k_i  &\le 2 \onenorm{ \left(  x_{*} \right)_{S^c}  } - \onenorm{ v^k_{S^c } }  +  \frac{\varepsilon_k N}{4}\\
& \le 2 \onenorm{ \left(  x_{*} \right)_{S^c}  } - \onenorm{ v^k_{S^c } }  +  \frac{ \sigma_s(x^k)_{\ell_1} }{4} \\
&\le 2 \onenorm{ \left(  x_{*} \right)_{S^c}  } - \onenorm{ v^k_{S^c } }  +  \frac{ \onenorm{x^k_{S^c}} }{4}\\
& \le   2 \onenorm{ \left(  x_{*} \right)_{S^c}  } - \onenorm{ v^k_{S^c } }  +  \frac{ \onenorm{v^k_{S^c}} }{4}+ \frac{ \onenorm{(x_{*})_{S^c}} }{4}
\\& =   \frac{9}{4}  \onenorm{ \left(  x_{*} \right)_{S^c}  } - \frac{3}{4} \onenorm{ v^k_{S^c } },
\end{align*}
where we used the triangular inequality for the vector $v^k=x^k-x_{*} $ on the set $S^c$ and the fact that $\sigma_s(x^k)_{\ell_1} \leq \onenorm{ x^k_{S^c}}$. Hence, by adding up terms we obtain that
\begin{align*}
\langle \nabla \mathcal{J}_{\varepsilon_k}(x^k), v^k\rangle &\le \frac{9}{4}  \onenorm{ \left(  x_{*} \right)_{S^c}  } -   \left(\frac{3}{4}   -\rho_s   \right) \onenorm{ v^k_{S^c } } \leq -( \beta -\rhos) \onenorm{ v^k_{S^c }}.
\end{align*}
Here, we have set $\beta=3/4$ in the case that $\sigmaxstar =0 $ and $ \beta =1/4$ else. Moreover, we used the assumption  $\onenorm{ \left( x_{*} \right)_{S^c}   } \le  \frac{2}{9}\onenorm{v^k_{S^c}}$. \\

\noindent \textbf{Part II: Bounding the quadratic term $\langle v^k, \diag(w_{\varepsilon_k}(x^k))v^k \rangle$} \label{sec:bound:quadratic}

\noindent In order bound the quadratic term in \eqref{eq:objective:diff} we first decompose it into two parts
\begin{equation} \label{eq:bound:quadraticterm_0}
\begin{split}
\langle v^k, \diag(w_{\varepsilon_k}(x^k)) v^k \rangle &= \sum_{i=1}^{N} \frac{(v^k_i)^2}{\max(|x_i^{k}|,\varepsilon_k)}  = \sum_{i \in S } \frac{(v^k_i)^2}{\max(|x_i^{k}|,\varepsilon_k)} + \sum_{i \in S^c } \frac{(v^k_i)^2}{\max(|x_i^{k}|,\varepsilon_k)}.
\end{split}
\end{equation}
For the first summand, we note that
\begin{align}\label{quadraticterm_S}
\sum_{i \in S}  \frac{ (v^k_i)^2}{\max(|x^k_i|,\varepsilon_k)}   \le \frac{\Vert v^k_S \Vert_1 \Vert  v^k_S \Vert_{\infty}}{\varepsilon_k} \le  \rho_s \frac{\Vert  v^k_{S^c} \Vert_1  \Vert v^k \Vert_{\infty}  }{\varepsilon_k} \le \frac{\Vert  v^k_{S^c} \Vert_1  \Vert v^k \Vert_{\infty}  }{\varepsilon_k}.
\end{align}

\noindent For the second summand, it holds that
\begin{align}\label{quadraticterm_Sc}
\sum_{i \in S^c}  \frac{ \left(  v^k_i  \right)^2  }{\max(|x^k_i|,\varepsilon_k) }  \le \frac{\Vert  v^k_{S^c} \Vert_{\infty}   \onenorm{ v^k_{S^c} }  }{\varepsilon_k} \leq  \frac{\onenorm{   v^k_{S^c}}  \Vert v^k \Vert_{\infty}     }{\varepsilon_k},
\end{align}
Hence, by adding \eqref{quadraticterm_S} and \eqref{quadraticterm_Sc} up, it follows that
\begin{align*}
\langle  v^k, \diag(w_{\varepsilon_k}(x^k))  v^k \rangle
\le  2 \frac{\onenorm{   v^k_{S^c}}  \Vert v^k \Vert_{\infty}     }{\varepsilon_k}.
\end{align*}
Next, we note that
\begin{equation*}
\Vert v^k \Vert_{\infty}  \le \rho_1 \onenorm{ v^k } \le  \rho_1 \left(1+\rho_s\right) \onenorm{ v^k_{S^c} } \le 2 \rho_1  \onenorm{ v^k_{S^c} }.
\end{equation*}
Hence, we have shown that
\begin{equation*}
\langle  v^k, \diag(w_{\varepsilon_k}(x^k))  v^k \rangle\le  4 \rho_1  \frac{\Vert  v^k_{S^c} \Vert_1^2}{\varepsilon_k}  .
\end{equation*}

\noindent \textbf{Part III: Combining the bounds to obtain decrease in $k$-th step:}

Inserting the bounds of Part I and Part II into \eqref{eq:objective:diff} we obtain
\begin{equation} \label{eq:bound:termdifference_1}
Q_{\varepsilon_k}(x^k+t v^k,x^k) - \mathcal{J}_{\varepsilon_k}(x^k) \leq  - t b + t^2 a =:h(t),
\end{equation}
where the function $h: \R \to \R$ is a quadratic polynomial with coefficients $ b = \left( \beta - \rho_s\right) \|v^k_{S^c}\|_1$ and $ a = 4\rho_1 \frac{\Vert v^k_{S^c} \Vert_1^2}{\varepsilon_k}  $. We observe that the minimizer of $h$ is given by $t= \frac{b}{2a}$. Inserting this into $h$, we obtain that

\begin{equation} \label{eq_h_s0}
\begin{split}
h\left(\frac{b}{2a}\right) &= - \frac{b^2}{4a} = - \frac{(\beta -\rho_s)^2 \Vert v^k_{S^c} \Vert_1^2 \varepsilon_k}{16 \rho_1 \Vert v^k_{S^c}\Vert_1^2 } = - \frac{ ( \beta -\rho_s)^2}{16 \rho_1 }\varepsilon_k.
\end{split}
\end{equation}
Combining this with \eqref{eq:J:monotonicity:1}, we obtain, for $t = \frac{b}{2a}$,
\begin{equation*}
\begin{split}
\mathcal{J}_{\varepsilon_{k+1}}(x^{k+1}) - \mathcal{J}_{\varepsilon_{k}}(x^k) &\leq Q_{\varepsilon_k}(x^{k+1},x^k) -  \mathcal{J}_{\varepsilon_{k}}(x^k) \\
&\leq Q_{\varepsilon_k}(x^k+t v^k,x^k) -  \mathcal{J}_{\varepsilon_{k}}(x^k) \leq - \frac{(\beta-\rho_s)^2}{16 \rho_1 }\varepsilon_k. 
\end{split}
\end{equation*}
Hence, by rearranging terms it follows that
\begin{equation}\label{eq:function_J_decreasing}
\mathcal{J}_{\varepsilon_{k+1}}(x^{k+1}) -  \onenorm{x_{*}} \le \mathcal{J}_{\varepsilon_{k}}(x^{k}) -  \onenorm{x_{*}} - \frac{(\beta-\rho_s)^2}{16 \rho_1 }\varepsilon_k. 
\end{equation}
In order to proceed, we need to bound $\varepsilon_k$ from below.
For that, we note that 
\begin{equation*}
\varepsilon_k = \min\left(\varepsilon_{k-1} , \frac{ \sigma_s(x^{k})_{\ell_1}}{N} \right) = \frac{ \sigma_s \left(x^{\ell}\right)_{\ell_1}   }{N} 
\end{equation*}
for some $\ell \le k$. By \Cref{lemma:epscontrol}, we have the following inequality chain
\begin{align*}
N\varepsilon_k =  \sigma_s \left(x^{\ell}\right)_{\ell_1} &\ge   \frac{1}{3} \left(\mathcal{J}_{\varepsilon^{\ell}} \left(x^{\ell}\right) - \onenorm{x_{*}} \right)\ge   \frac{1}{3} \left(\mathcal{J}_{\varepsilon^{k}} \left(x^{k}\right) - \onenorm{x_{*}} \right),
\end{align*}
where in the second inequality we have used that, by induction, $\mathcal{J}_{\varepsilon_k} \left(x^{k}\right) \le  \mathcal{J}_{\varepsilon^{\ell}} \left(x^{\ell}\right) $.
Plugging this into \eqref{eq:function_J_decreasing} leads to 
\begin{align*}
\mathcal{J}_{\varepsilon_{k+1}}(x^{k+1}) -  \onenorm{x_{*}} \le \left(  1-  \frac{(\beta-\rhos)^2}{48 \rho_1  N }\right)\left(     \mathcal{J}_{\varepsilon_{k}}(x^{k}) -  \onenorm{x_{*}} \right). 
\end{align*}
This finishes the induction step and concludes the proof of \Cref{thm:p1:linearrate}.

\end{proof}

\noindent From \Cref{thm:p1:linearrate} we can deduce \Cref{mainresult:sparse},  the first main result of this manuscript.

\begin{proof}[Proof of  \Cref{mainresult:sparse} ]
Recall that by \Cref{thm:p1:linearrate} we have for all $k \in \mathbb{N}$ that
\begin{align*}
\mathcal{J}_{\varepsilon_{k}}(x^{k}) -  \onenorm{x_{*}} &\le \left(  1-  \frac{c_{\rhos}}{\rho_1 N}   \right)^k \left(     \mathcal{J}_{\varepsilon_{0}}(x^0) -  \onenorm{x_{*}} \right)
\end{align*}
with a constant $ c_{\rho_s} =   \frac{(3/4-\rhos)^2}{48}   $ and where $S$ denotes the set, which contains the $s$ largest entries  of $x_{*}$ in absolute value. . By our assumption $\rho_s < 1/2$ it follows that $c_{\rho_s} \ge 1/768$, which implies that inequality \eqref{equ:linearconvergence1} holds.\\
 
\noindent By \Cref{lemma:epscontrol} we have that
\begin{align*}
  \mathcal{J}_{\varepsilon_{0}}(x^0) -  \onenorm{x_{*}}   \le 3 \sigma_s \left( x^0 \right)_{\ell_1} 
 \le 3 \onenorm{x^0-x_{*}}.
\end{align*}
Next, we note that, again by \Cref{lemma:epscontrol}, it holds that
\begin{align*}
	\frac{1-\rhos}{1+\rhos}  \onenorm{x -x_{*}} \le \mathcal{J}_{\varepsilon_{k}}(x^{k}) -  \onenorm{x_{*}}  .
\end{align*}
Combining the three inequalities in this proof together with the assumption $\rho_s \le 1/2$ yields inequality \eqref{ineq:globalconvergenerate1}, which finishes the proof.
\end{proof}
Next, we are going to prove the second main result in this manuscript, \Cref{mainresult:approximatesparse}, which deals with the approximately sparse case.
\begin{proof}[Proof of \Cref{mainresult:approximatesparse}]
Recall that
\begin{align*}
 \hat{k}:= \min \left\{ k \in \mathbb{N}: \  \sigmaxstar > \frac{2}{9}\onenorm{\left(x_{*}\right)_{S^c} -x^{k}_{S^c}}   \right\}.
 \end{align*}
Moreover, we note that by \Cref{thm:p1:linearrate} we have for $ k \le \hat{k}  $
\begin{equation}\label{ineq:intern4}
\mathcal{J}_{\varepsilon_{k}}(x^{k}) -  \onenorm{x_{*}} \le \left(  1-  \frac{c_{\rhos}}{\rho_1 N}   \right)^k \left(     \mathcal{J}_{\varepsilon_{0}}(x^0) -  \onenorm{x_{*}} \right) 
\end{equation}
with a constant $ c_{\rho_s} =      \frac{(1/4-\rhos)^2}{48}  $. Hence, by our assumption $\rho_s < 1/8$ we obtain $c_{\rho_s} \ge 1/3072$ and inequality \eqref{equ:linearconvergence2} follows, which proves the first statement. In order to prove the second statement, let $\tilde{k} $ and $k$ be natural numbers, such that $  \tilde{k}  \le \hat{k}$ and $k \ge \tilde{k} $ holds. Then we obtain that
\begin{align*}
  \frac{1-\rhos}{1+\rhos}  \onenorm{x^k -x_{*}}  -  2 \sigmaxstar &\le \mathcal{J}_{\varepsilon_{k}}(x^{k}) -  \onenorm{x_{*}}\\
 &\le \mathcal{J}_{\varepsilon_{\tilde{k} }}(x^{\tilde{k} }) -  \onenorm{x_{*}}\\
&\le \left(  1-  \frac{c_{\rhos}}{\rho_1 N}   \right)^{\tilde{k} } \left(     \mathcal{J}_{\varepsilon_{0}}(x^0) -  \onenorm{x_{*}} \right)  \\
&\le  3 \left(  1-  \frac{c_{\rhos}}{\rho_1 N}   \right)^{\tilde{k} }   \sigma_s \left(x^0\right)_{\ell_1},
\end{align*}
where in the first inequality we applied \Cref{lemma:epscontrol}. In the second inequality we used that the sequence $ \left\{ \mathcal{J}_{\varepsilon^{\ell}}  \left(  x^{\ell}  \right) \right\}_{\ell} $ is monotonically decreasing and in the third inequality we used inequality \eqref{ineq:intern4}. In the fourth inequality we again used \Cref{lemma:epscontrol}.
By rearranging terms and using the assumption $ \rhos < 1/8 $ it follows for all integers $\tilde{k}$ and $k$ such that   $\tilde{k}  \le  \hat{k}$ and $k \ge \tilde{k}$ 
\begin{align}\label{ineq:intern18}
  \onenorm{x^k -x_{*}}    &\le  6 \left(  1-  \frac{c_{\rhos}}{\rho_1 N}   \right)^{\tilde{k} }   \sigma_s \left(  x^{ 0 } \right)_{\ell_1}+ 4  \sigmaxstar    .
\end{align}
In order to proceed,  recall that $S$ denotes the support of the $s$ largest entries of $x_{*}$. Then we note that
\begin{equation}\label{ineq:intern2}
\sigma_s \left(x^0\right)_{\ell_1} \le   \onenorm{x^0_{S^c}} \le   \onenorm{\left(x^0  -x_{*}\right)_{S^c} }  + \onenorm{\left(x_{*}\right)_{S^c} } \le  \onenorm{ x^0 -x_{*} }  + \sigmaxstar.
\end{equation}
Hence, we have shown that for all integers $\tilde{k}$ and $k$ such that   $\tilde{k}  \le \hat{k}$ and $k \ge \tilde{k}$ it holds that
\begin{equation}\label{ineq:intern1}
\onenorm{x^k -x_{*}}    \le  6 \left(  1-  \frac{c_{\rhos}}{\rho_1 N}   \right)^{\tilde{k} }  \onenorm{x^0 -x_{*}} + 10 \sigmaxstar.
\end{equation}
By setting $k=\tilde{k} $, we observe that this implies inequality \eqref{ineq:approxsparse1}, which shows the second statement. In order to prove the third statement, we will distinguish two cases. For the first case, assume that $ \hat{k} \ge \Big\lceil \frac{\rho_1 N }{c_{\rho_s}} \log \left(   \frac{  \onenorm{x^0 -x_{*}} }{  \sigmaxstar }   \right) \Big\rceil $.
Then for $ k \ge \tilde{k} :=  \Big\lceil  \frac{\rho_1 N }{c_{\rho_s}} \log \left(   \frac{  \onenorm{x^0 -x_{*}} }{  \sigmaxstar }    \right) \Big\rceil  $ it follows from inequality \eqref{ineq:intern1} that
\begin{align*}
\onenorm{x^k -x_{*}}    & \le  6 \left(  1-  \frac{c_{\rhos}}{\rho_1 N}   \right)^{\frac{\rho_1 N }{c_{\rho_s}} \log \left(   \frac{  \onenorm{x^0 -x_{*}} }{  \sigmaxstar }  \right)  }  \onenorm{x^0 -x_{*}} + 10 \sigmaxstar \le  20 \sigmaxstar.
\end{align*}
where in the second inequality we have used the elementary inequality $\log \left(1+t\right) \le t$ for $t>-1$. This shows the third statement in the first case. To prove the second case, assume that $ \hat{k} < \Big\lceil \frac{\rho_1 N }{c_{\rho_s}} \log \left(   \frac{ \onenorm{x^0 -x_{*}} }{  \sigmaxstar }   \right) \Big\rceil $. Then we can compute that
\begin{align*}
\frac{1-\rhos}{1+\rhos}  \onenorm{x^k -x_{*}}  -  2 \sigmaxstar &\le \mathcal{J}_{\varepsilon_{k}}(x^{k}) -  \onenorm{x_{*}}\\
&\le \mathcal{J}_{\varepsilon_{\tilde{k} }}(x^{\hat{k} }) -  \onenorm{x_{*}}\\
&\le  3  \sigma_s \left(x^{\hat{k}}\right)_{\ell_1}\\
&\le 3 \onenorm{ x^{\hat{k}} -x_{*} }  + 3 \sigmaxstar \\
&\le 3 \left(1+\rho_s\right) \onenorm{ \left( x^{\hat{k}} -x_{*} \right)_{S^c}  }  + 3 \sigmaxstar \\
&\le 20 \sigmaxstar.
\end{align*}
In the first and third inequality we have used \Cref{lemma:epscontrol}. In the second inequality we have used the monotonicity of the sequence $ \left\{ \mathcal{J}_{\varepsilon_{k}} \left(x^k\right)  \right\}_k $. In the fourth inequality we have argued as in  inequality \eqref{ineq:intern2} and in the fifth inequality we have used the null space property. In the last inequality we have used that by definition of $\hat{k} $ it holds that  $ \sigmaxstar > \frac{2}{9}\onenorm{\left(x_{*}\right)_{S^c} -x^{\hat{k}}_{S^c}} $. This shows that also in the second case the third statement holds, which finishes the proof.
\end{proof}

\section{Computationally Efficient Weighted Least Squares Updates}\label{section:ineq_solution}
As pointed out in \Cref{section:background}, the weighted least squares step \eqref{step_1} of \Cref{alg:algo1} can be computed using the explicit formula provided by the weighted Moore-Penrose inverse such that $x^{k+1} = W_k^{-1} A^* (A W_k^{-1} A^*)^{-1} (y)$ where $W_k = \diag(w_k)$, see e.g. \cite[Chapter 15.3]{FoucartRauhut13}. This leads to the main computational step of solving the positive linear system 
\begin{equation} \label{eq:mm:linsystem}
    \left(A W_k^{-1} A^*\right) z = y
\end{equation}
of size $(m \times m)$, which is solved in practice often by an iterative solver such as the conjugate gradient method \cite{HestenesStiefel52,Meurant}, especially if the measurement matrix $A$ allows for fast matrix-vector multiplications \cite{Voronin12,fornasier_peter_rauhut_worm}. The authors of \cite{fornasier_peter_rauhut_worm} provide conditions on the accuracy of conjugate gradient (CG) solvers of the successive linear systems for an IRLS algorithm for basis pursuit using weights that are given by 
\begin{equation} \label{eq:weights:Daubechies}
    (w_{k})_i = 1/\sqrt{|x_i^k|^2+\varepsilon_k^2}
\end{equation}
for each $i \in [n]$ (coinciding with the choice of the weights of \cite{Daubechies10,AravkinBurkeHe19}).

It was observed that combining IRLS with CG provides a competitive algorithm for sparse recovery if the required  accuracy is chosen appropriately  \cite{fornasier_peter_rauhut_worm}, however, with the problem that as the IRLS method approaches a sparse solution, and therefore, the smoothing parameter $\varepsilon_k$ becomes small, the linear system matrix $A W_k^{-1} A^*$ suffers more and more from ill-conditioning \cite[Section 5.2]{fornasier_peter_rauhut_worm}. To the best of our knowledge, this numerical issue has not been resolved for any IRLS method for sparse recovery in the literature so far.

In this section, we provide an implementation of the weighted least squares step \eqref{step_1} that avoids the ill-conditioning issue of previous IRLS methods and we sketch why the condition number of the resulting linear system can indeed be bounded under standard assumptions. 

The starting point of this implementation is the observation that for weight choice \eqref{eq:IRLS:step3} used in this paper, i.e., for 
\begin{equation*}
    \left( w_{k} \right)_i :=  \frac{1}{ \max \left( \vert x_i^{k}  \vert , \varepsilon_{k}   \right) }
\end{equation*}
for each $i \in [N]$, unlike for \eqref{eq:weights:Daubechies}, it is possible to write the inverse weight matrix $W_k^{-1} \in \R^{N \times N}$ such that 
\begin{equation} \label{eq:Wkmin1:formula}
    W_k^{-1} = W_{I_k}^{-1} + \varepsilon_k \left(\text{Id}_{N} - P_{I_k}  \right)  = \left(W_{I_k}^{-1}- \varepsilon_k P_{I_k}\right) + \varepsilon_k \text{Id},
\end{equation}
if $I_k := \{i \in [N]: |x^k_i| > \varepsilon_k\}$ denotes the set $I$ of the proof of \Cref{thm:p1:linearrate}, $W_{I_k}^{-1} \in \R^{N \times N}$ is the diagonal matrix with entries $|x^k_i|$ if $i \in I_k$ and $0$ otherwise, and where $P_{I_k}$ denotes the projection matrix such that $P_{I_k}x = x_{I_k}$. In particular, we observe that $(W_k)^{-1}$ is the sum of a scaled identity matrix $\text{Id}$ and a (diagonal) matrix with only $s_k:=|I_k|$ non-zero entries. Furthermore, due to the update rule \eqref{step_2} of the smoothing parameter $\varepsilon_k$, it can be seen that $s_k$ is small and of the order of $s$ if $\varepsilon_k$ approaches $0$, i.e., if the $k$-th iterate $x^k$ of \Cref{alg:algo1} has only small coordinates outside a set of $s$ large coordinates. 

By the Sherman-Morrison-Woodbury matrix inversion formula \cite{Woodbury50}, cf. \Cref{lemma:Woodbury} below, it is therefore possible to reformulate the weighted least squares problem such that $x^{k+1}$ can be computed such that solving a positive definite linear system of size $(s_k \times s_k)$, which furthermore is well-conditioned, becomes the main computational step, avoiding solving the ill-conditioned system \eqref{eq:mm:linsystem} in this way. We summarize this implementation in \Cref{algo:IRLS:mainstep:implementation} below.

The implementation uses the matrix $V \in \R^{m \times N}$ with orthonormal columns which denotes a the projection onto the range space of the measurement matrix $A \in \R^{m \times N}$, as well as the left singular vector matrix $U \in \R^{m \times m}$ of $A$ and the diagonal matrix $\Sigma_A \in \R^{m \times m}$ containing the singular values of $A$. These can be pre-computed before using IRLS, for example via a singular value decomposition of $A$. \footnote{In a large scale setting where $A$ is only accessed through matrix-vector multiplications, providing this information is not necessary as respective information can be replaced by using matrix-vector multiplications with $A$, $(A A^*)^{-1}$ and $A^*$.} Likewise, the vector 
\begin{equation} \label{eq:def:ytilde}
    \widetilde{y} = V \Sigma_A^{-1} (U^* y)
\end{equation}
can be pre-computed and can be re-used at each outer iteration of IRLS.

In the following, if $I \subset [N]$, we denote by $M_I \in \R^{m \times |I|}$ the restriction of a matrix $M \in \R^{m \times N}$ to the columns indexed by $I$, and by $Q_{I_k} \in \R^{N \times I_k} $ the projector matrix such that $P_{I_k} = Q_{I_k} Q_{I_k}^*$. Furthermore, let $D_{I_k}^{-1} \in \R^{I_k \times I_k}$ be a diagonal matrix such that $(D_{I_k}^{-1})_{ii} = |x^k_i|$ for each $i \in I_k$. 

\begin{algorithm}[h]
\caption{Practical implementation of weighted least squares step of IRLS for small $\varepsilon_k$} \label{algo:IRLS:mainstep:implementation}
\begin{algorithmic}
\STATE{\bfseries Input:} Matrix $V \in \mathbb{R}^{m \times N}$ projecting onto range space of measurement matrix $A$, $\widetilde{y} \in \mathbb{R}^{N}$ from \eqref{eq:def:ytilde}, smoothing parameter $\varepsilon_k$, projection $\gamma_k^{(0)} = Q_{I_k}^*Q_{I_{k-1}}(\gamma_{k-1}) \in \mathbb{R}^{I_k}$ of solution $\gamma_{k-1} \in \R^{I_{k-1}}$ of linear system \eqref{eq:gamma:system} for previous iteration $k-1$.
\end{algorithmic}
\begin{algorithmic}[1]
\STATE Compute $h_k^0 = Q_{I_k}^*\widetilde{y} - \left(\varepsilon_k \left(D_{I_k}^{-1}- \varepsilon_k \text{Id}\right)^{-1} + (V^*)_{I_k}^* (V^*)_{I_k}\right)  \gamma_k^{(0)} \in \R^{I_k}$.
\STATE Solve 	
\begin{equation} \label{eq:gamma:system} 
	\left(\varepsilon_k \left(D_{I_k}^{-1}- \varepsilon_k \text{Id}\right)^{-1} + (V^*)_{I_k}^* (V^*)_{I_k}\right) \Delta\gamma_{k} =  h_k^0	
	\end{equation}
	 for $\gamma_k \in \R^{I_k}$ by the \emph{conjugate gradient} method \cite{HestenesStiefel52,Meurant}.
 \STATE Compute $\gamma_k = \gamma_k^{(0)} + \Delta\gamma_k \in \R^{I_k}$.
\STATE Compute residual $r_{k+1}:= \widetilde{y}-V (V^*)_{I_k} (\gamma_{k}) \in \R^N$.
\STATE Set $x^{k+1} = r_{k+1}$.
\STATE Set $(x^{k+1})_{I_k} = (x^{k+1})_{I_k} + \gamma_k$.
\STATE{\bfseries Output:} $x^{k+1} \in \R^N$ and $\gamma_k \in I_k$.
\end{algorithmic}
\end{algorithm}

Next, we verify that $x^{k+1}$ as computed by \Cref{algo:IRLS:mainstep:implementation} indeed is a solution of the weighted least squares problem \eqref{step_1}. 

As a preparation, we state the Sherman-Morrison-Woodbury formula for matrix inversion.
\begin{lemma}[{\cite{Woodbury50}}] \label{lemma:Woodbury}
Let $B \in \mathbb{R}^{n \times n}, C \in \mathbb{R}^{k \times k}, E \in \mathbb{R}^{n \times k}$ and $F \in \mathbb{R}^{k \times n}$. Then, it holds that 
\begin{equation*} \label{eq_Woodbury_identity}
(E C F^* + B)^{-1} = B^{-1} - B^{-1} E ( C^{-1} + F^* B^{-1} E )^{-1} F^* B^{-1}
\end{equation*}
\end{lemma}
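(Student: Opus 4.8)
The plan is to prove the identity by a direct algebraic verification, which is all that is needed once we make explicit the standing invertibility hypotheses that are implicit in the statement: for every inverse appearing in the claim to be meaningful we must assume that $B$, $C$, and the smaller matrix $C^{-1} + F^* B^{-1} E$ are all invertible, and that the block shapes are conformable (so that $F^* B^{-1} E$ is a square matrix of the same size as $C$). Under these assumptions both sides are well-defined $(n\times n)$ matrices, and since $M := E C F^* + B$ is square it suffices to verify that the proposed right-hand side is a \emph{right} inverse of $M$; a right inverse of a square matrix is automatically two-sided and therefore equal to $M^{-1}$.

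To carry this out I would abbreviate $G := \left( C^{-1} + F^* B^{-1} E \right)^{-1}$ and write $X := B^{-1} - B^{-1} E G F^* B^{-1}$ for the claimed inverse. Expanding $M X = (E C F^* + B) X$ and using $B B^{-1} = \Id$ gives
\begin{equation*}
M X = \Id + E C F^* B^{-1} - E G F^* B^{-1} - E C F^* B^{-1} E G F^* B^{-1}.
\end{equation*}
Each of the three non-identity terms carries a left factor $E$ and a right factor $F^* B^{-1}$, so I would regroup them as
\begin{equation*}
M X = \Id + E \left[\, C - G - C F^* B^{-1} E\, G \,\right] F^* B^{-1}.
\end{equation*}
The crux is then the bracketed expression. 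Combining its last two summands and inserting $\Id = C C^{-1}$ yields $G + C F^* B^{-1} E\, G = \bigl( \Id + C F^* B^{-1} E \bigr) G = C \bigl( C^{-1} + F^* B^{-1} E \bigr) G = C G^{-1} G = C$, so the bracket collapses to $C - C = 0$ and hence $M X = \Id$, completing the argument.

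I do not anticipate any genuine obstacle here: the statement is the classical Sherman–Morrison–Woodbury identity, and the verification reduces to the single regrouping $\Id + C F^* B^{-1} E = C\left( C^{-1} + F^* B^{-1} E \right) = C G^{-1}$. The only points requiring minor care are flagging the invertibility hypotheses that are left implicit in the statement and tracking dimensions so that every product is conformable. As a remark one could instead give the symmetric derivation that computes the inverse of an appropriately signed block matrix $\begin{pmatrix} B & -E \\ F^* & C^{-1} \end{pmatrix}$ by taking Schur complements with respect to each diagonal block and equating the two resulting $(1,1)$-blocks; however, the direct verification above is the shorter route and is the one I would present.
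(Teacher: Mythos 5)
Your proof is correct. Note, however, that the paper does not actually prove this lemma: it is quoted as the classical Sherman--Morrison--Woodbury identity with a citation to Woodbury's 1950 report and is then used as a black box in the derivation of the efficient weighted least squares update (\Cref{lemma:weightedleastsquares:implement}). Your direct verification --- showing that $X = B^{-1} - B^{-1}EGF^*B^{-1}$ with $G = (C^{-1}+F^*B^{-1}E)^{-1}$ is a right inverse of $M = ECF^*+B$ and invoking the fact that a right inverse of a square matrix is automatically two-sided --- is therefore a valid, self-contained substitute for the citation, and the algebra is exact; the key step is precisely the collapse $\bigl(\Id + CF^*B^{-1}E\bigr)G = CG^{-1}G = C$ that you isolate. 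Two points you flag deserve emphasis because they are genuinely left implicit in the statement: first, the invertibility of $B$, $C$, and $C^{-1}+F^*B^{-1}E$ must be assumed for the identity to be meaningful; second, the dimensions as literally printed are inconsistent --- with $F \in \mathbb{R}^{k\times n}$ the product $ECF^*$ is not conformable, so $F$ should be read as an element of $\mathbb{R}^{n\times k}$ (which is consistent with the paper's later application, where $E = F := A_{I_k} \in \mathbb{R}^{m \times |I_k|}$). Your conformability hypothesis silently corrects this typo.
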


With this, we proceed to the proof of the following statement.
\begin{lemma} \label{lemma:weightedleastsquares:implement}
If $x^{k+1} \in \R^N$ is the output of \Cref{algo:IRLS:mainstep:implementation}, then $x^{k+1}$ coincides with the solution of the weighted least squares problem \eqref{step_1}. 
\end{lemma}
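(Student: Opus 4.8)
The plan is to take the closed form of the weighted least squares solution of \eqref{step_1}, namely $x^{k+1} = W_k^{-1}A^*(AW_k^{-1}A^*)^{-1}y$ (cf.\ the discussion around \eqref{eq:mm:linsystem}), as the object to be reproduced, and to check that \Cref{algo:IRLS:mainstep:implementation} outputs exactly this vector. Throughout I assume, as is standard for a measurement matrix, that $A$ has full row rank $m$, and I write the thin SVD $A = U\Sigma_A V^*$ with $U \in \R^{m\times m}$ orthogonal, $\Sigma_A \in \R^{m\times m}$ invertible and diagonal, and $V \in \R^{N\times m}$ having orthonormal columns spanning the row space of $A$, so that $V^*V = \Id_m$ and $VV^*$ is the orthogonal projection onto $\Ran(A^*)$. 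The first step is a reduction: substituting $A^* = V\Sigma_A U^*$ into the closed form and using $U^*U = \Id_m$ collapses every $U$ and $\Sigma_A$ factor and yields $x^{k+1} = W_k^{-1}V(V^*W_k^{-1}V)^{-1}V^*\widetilde y$, where $\widetilde y = V\Sigma_A^{-1}U^*y$ is the precomputed vector \eqref{eq:def:ytilde}. Along the way I would record the identities $A\widetilde y = y$, $VV^*\widetilde y = \widetilde y$ (i.e.\ $\widetilde y \in \Ran V$), and $V^*\widetilde y = \Sigma_A^{-1}U^*y$, which drive the collapse.

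The heart of the argument is to invert the $m\times m$ matrix $V^*W_k^{-1}V$ by exploiting the low-rank-plus-scaled-identity structure of $W_k^{-1}$. Inserting \eqref{eq:Wkmin1:formula}, namely $W_k^{-1} = \varepsilon_k\Id_N + Q_{I_k}(D_{I_k}^{-1} - \varepsilon_k\Id)Q_{I_k}^*$, and using $(V^*)_{I_k} = V^*Q_{I_k}$ gives $V^*W_k^{-1}V = \varepsilon_k\Id_m + (V^*)_{I_k}(D_{I_k}^{-1} - \varepsilon_k\Id)(V^*)_{I_k}^*$. I then apply the Sherman--Morrison--Woodbury identity (\Cref{lemma:Woodbury}) with $B = \varepsilon_k\Id_m$, $E = F = (V^*)_{I_k}$ and $C = D_{I_k}^{-1} - \varepsilon_k\Id$. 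The point is that the inner matrix that Woodbury asks to invert, $C^{-1} + \varepsilon_k^{-1}(V^*)_{I_k}^*(V^*)_{I_k}$, is \emph{exactly} $\varepsilon_k^{-1}M$, where $M := \varepsilon_k(D_{I_k}^{-1}-\varepsilon_k\Id)^{-1} + (V^*)_{I_k}^*(V^*)_{I_k}$ is the $s_k\times s_k$ system matrix of \eqref{eq:gamma:system}. This produces $(V^*W_k^{-1}V)^{-1} = \varepsilon_k^{-1}\Id_m - \varepsilon_k^{-1}(V^*)_{I_k}M^{-1}(V^*)_{I_k}^*$, so the only genuine inversion is the small, well-conditioned solve the algorithm performs.

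It remains to match the reduced formula with the algorithm's output. Combining steps 1--3 shows that the warm start $\gamma_k^{(0)}$ cancels and that $\gamma_k = \gamma_k^{(0)} + \Delta\gamma_k$ solves $M\gamma_k = Q_{I_k}^*\widetilde y = \widetilde y_{I_k}$, i.e.\ $\gamma_k = M^{-1}Q_{I_k}^*\widetilde y$. Applying the Woodbury expression to $V^*\widetilde y$ and using $(V^*)_{I_k}^*V^*\widetilde y = Q_{I_k}^*VV^*\widetilde y = \widetilde y_{I_k}$ gives $(V^*W_k^{-1}V)^{-1}V^*\widetilde y = \varepsilon_k^{-1}\bigl(V^*\widetilde y - (V^*)_{I_k}\gamma_k\bigr)$. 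Left-multiplying by $W_k^{-1}V$, expanding $W_k^{-1}$ once more via \eqref{eq:Wkmin1:formula}, and using $VV^*\widetilde y = \widetilde y$, the factors of $\varepsilon_k$ cancel and one is left with $x^{k+1} = \widetilde y - V(V^*)_{I_k}\gamma_k + \varepsilon_k^{-1}Q_{I_k}(D_{I_k}^{-1}-\varepsilon_k\Id)\bigl(\widetilde y_{I_k} - (V^*)_{I_k}^*(V^*)_{I_k}\gamma_k\bigr)$. The first two terms are precisely the residual $r_{k+1}$ of step 4, and a final elementary manipulation---substituting $M\gamma_k = \widetilde y_{I_k}$, which yields $\widetilde y_{I_k} - (V^*)_{I_k}^*(V^*)_{I_k}\gamma_k = \varepsilon_k(D_{I_k}^{-1}-\varepsilon_k\Id)^{-1}\gamma_k$---collapses the last ($Q_{I_k}$-supported) term into $Q_{I_k}\gamma_k$, which is exactly what steps 5--6 add on the coordinates $I_k$. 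Hence the output of \Cref{algo:IRLS:mainstep:implementation} equals the solution of \eqref{step_1}.

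I expect the main obstacle to be organizational rather than conceptual: the computation is routine linear algebra, but it requires care in three places. First, one must recognize the Woodbury inner matrix as $\varepsilon_k^{-1}M$, so that \eqref{eq:gamma:system} is literally the inversion Woodbury demands. Second, one must keep the restriction and embedding operators straight, in particular $(V^*)_{I_k} = V^*Q_{I_k}$, $Q_{I_k}^*\widetilde y = \widetilde y_{I_k}$, and $VV^*\widetilde y = \widetilde y$ (valid because $\widetilde y \in \Ran V$). Third, the closing cancellation must feed $M\gamma_k = \widetilde y_{I_k}$ back in to merge the two $Q_{I_k}$-supported contributions into a single $Q_{I_k}\gamma_k$; this is the step where the specific form of the weights \eqref{eq:IRLS:step3} (as opposed to \eqref{eq:weights:Daubechies}) is essential.
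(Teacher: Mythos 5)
Your proposal is correct and takes essentially the same approach as the paper: both start from the closed form $x^{k+1} = W_k^{-1}A^*(AW_k^{-1}A^*)^{-1}y$, use the decomposition \eqref{eq:Wkmin1:formula}, apply the Sherman--Morrison--Woodbury identity of \Cref{lemma:Woodbury} so that the only genuine inversion is the system matrix of \eqref{eq:gamma:system}, and match the result to the steps of \Cref{algo:IRLS:mainstep:implementation}. The only (organizational) difference is that you pre-reduce via the SVD and apply Woodbury to $V^*W_k^{-1}V$ with $B=\varepsilon_k\Id$, so its inner matrix is immediately $\varepsilon_k^{-1}$ times the matrix in \eqref{eq:gamma:system}, whereas the paper applies Woodbury to $AW_k^{-1}A^*$ with $B=\varepsilon_k AA^*$ and identifies $A_{I_k}^*(AA^*)^{-1}A_{I_k}=(V^*)_{I_k}^*(V^*)_{I_k}$ at the end.
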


\begin{proof}[{Proof of \Cref{lemma:weightedleastsquares:implement}}]
    We recall from above that if $x_*^{k+1} \in \R^{N}$ is the solution of \eqref{step_1}, it holds that  $x_*^{k+1} = W_k^{-1} A^*(z)$ where $z$ is as in \eqref{eq:mm:linsystem}. Using \eqref{eq:Wkmin1:formula}, we see that
\begin{equation*}
    A W_k^{-1} A^* = A \left(\left(W_{I_k}^{-1}- \varepsilon_k P_{I_k}\right) + \varepsilon_k \text{Id}\right) A^* = A_{I_k} \left(D_{I_k}^{-1}- \varepsilon_k \text{Id}\right)A_{I_k}^* + \varepsilon_k A A^*.
\end{equation*}

By identifying $B:= \varepsilon_k AA^*$, $C := \left(D_{I_k}^{-1}- \varepsilon_k \text{Id}\right)$ and $E= F := A_{I_k}$, and by noting that the matrix $C$ is invertible, since, on the set $I_k$, we have $|x^{k+1}_i|> \varepsilon_k $, we obtain by using \Cref{lemma:Woodbury} that
\begin{equation*} 
\left(A(W_k)^{-1} A^*\right)^{-1}= \varepsilon_k^{-1} Z - \varepsilon_k^{-1} Z A_{I_k}G^{-1} A_{I_k}^* Z,
\end{equation*}
using the notation $Z: =(A A^*)^{-1}$ and $G: = \varepsilon_k C^{-1} + A_{I_k}^* Z A_{I_k}$.

Therefore, denoting the matrix $ \varepsilon_k C^{-1} + A_{I_k}^* Z A_{I_k}$ by $G$, we have

\begin{equation} \label{eq:z:maineq}
z=\left(A(W_k)^{-1} A^*\right)^{-1}y = \varepsilon_k^{-1} Z(y) - \varepsilon_k^{-1} Z A_{I_k}G^{-1} A_{I_k}^*Z(y) =\varepsilon_k^{-1}Z(y- A_{I_k}G^{-1} A_{I_k}^*Z(y))
\end{equation}
and therefore 

\begin{equation} \label{eq:AIkz}
\begin{split}
A_{I_k}^*(z) &= \varepsilon_k^{-1} (A_{I_k}^*Z(y)- A_{I_k}^* Z A_{I_k}G^{-1} A_{I_k}^*Z(y)) \\
&= \varepsilon_k^{-1} (A_{I_k}^*Z(y)- A_{I_k}^* Z A_{I_k}\left(\varepsilon_k C^{-1} + A_{I_k}^* Z A_{I_k}\right)^{-1} A_{I_k}^*Z(y)) \\
 &=\varepsilon_k^{-1} \left(A_{I_k}^*Z(y)-  \left(A_{I_k}^* Z A_{I_k}  \pm \varepsilon_k C^{-1}  \right) \left(\varepsilon_k C^{-1} + A_{I_k}^* Z A_{I_k}\right)^{-1} A_{I_k}^*Z(y)\right)  \\
&= C^{-1}  \left(\varepsilon_k C^{-1} + A_{I_k}^*Z A_{I_k}\right)^{-1} A_{I_k}^*Z(y).
\end{split}
\end{equation}

Thus, for the $(k+1)$-th iteration of IRLS, we obtain for the solution $x_{*}^{k+1}$ of \eqref{step_1} the representation 
\begin{equation} \label{eq:Z:calculation:computational}
\begin{split}
x_{*}^{k+1} &= W_k^{-1}A^*(z) =  \left(\varepsilon_k \Id + Q_{I_k} \left( D_{I_k}^{-1} - \varepsilon_k \Id \right) Q_{I_k}^*  \right)A^*(z) \\
&= \left[ \varepsilon_k \text{Id} + Q_{I_k} C Q_{I_k}^*  \right]A^* z = \varepsilon_k A^* z +   Q_{I_k} C A_{I_k}^*(z) \\
&= \varepsilon_k A^* z + Q_{I_k} C C^{-1} \left(\varepsilon_k C^{-1} + A_{I_k}^* Z A_{I_k}\right)^{-1} A_{I_k}^*Z(y) \\
&= \varepsilon_k A^* z + Q_{I_k} G^{-1} A_{I_k}^*Z(y)
\end{split}
\end{equation}
using \eqref{eq:AIkz} in the third equality.

Next, if $V$ is as in the input of \Cref{algo:IRLS:mainstep:implementation}, since
\[
A_{I_k}^* Z A_{I_k} = Q_{I_k}^* A^* (A A^*)^{-1} A Q_{I_k} = Q_{I_k}^* V V^* Q_{I_k} = (V^*)_{I_k}^* (V^*)_{I_k},
\]
we observe that the matrix $G$ from above actually coincides with the linear system matrix of \eqref{eq:gamma:system}. 

Furthermore, if $\gamma_k$ is as in step 3 of \Cref{algo:IRLS:mainstep:implementation}, it satisfies
\[
\gamma_k = \gamma_k^{(0)} + \Delta\gamma_k =  \gamma_k^{(0)} + G^{-1}\f{h}_k^0 = \gamma_k^{(0)} + G^{-1} \left( Q_{I_k}^*\widetilde{y}  - G \gamma_k^{(0)}\right) =  G^{-1}Q_{I_k}^*\widetilde{y},
\]
where we also observe that
\[
Q_{I_k}^*\widetilde{y} = Q_{I_k}^* V \Sigma_A^{-1}U^*(y) = Q_{I_k}^* A^* \left(A A^*\right)^{-1} (y) = A_{I_k}^* Z (y).
\]
Using the latter equation, we can identify $z$ of \eqref{eq:z:maineq} such that
\[
z = \varepsilon_k^{-1}Z(y- A_{I_k}G^{-1} A_{I_k}^*Z(y)) = \varepsilon_k^{-1}Z\left(y - A_{I_k}G^{-1} Q_{I_k}^*\widetilde{y}\right) = \varepsilon_k^{-1}Z\left(y - A_{I_k}\gamma_k\right).
\]
Inserting this into \eqref{eq:Z:calculation:computational}, we obtain
\[
\begin{split}
x_{*}^{k+1} &= \varepsilon_k A^* z + Q_{I_k} G^{-1} A_{I_k}^*Z(y) = \varepsilon_k A^* z + Q_{I_k} G^{-1} Q_{I_k}^*\widetilde{y} = \varepsilon_k A^* z + Q_{I_k} \gamma_k \\
&=  A^* Z\left(y - A_{I_k}\gamma_k\right) + Q_{I_k} \gamma_k \\
&= V \Sigma_A^{-1} U^*\left(y - U \Sigma_A V^*Q_{I_k}\gamma_k\right) + Q_{I_k} \gamma_k \\
&= \widetilde{y} - V (V^*)_{I_k}\gamma_k + Q_{I_k} \gamma_k  \\
&= r_{k+1} + Q_{I_k} \gamma_k,
\end{split}
\]
where the residual $r_{k+1}$ is as in step 4 of \Cref{algo:IRLS:mainstep:implementation}. 

Comparing the equation $x_{*}^{k+1} = r_{k+1} + Q_{I_k} \gamma_k$ with the steps 5 and 6 of \Cref{algo:IRLS:mainstep:implementation}, we observe that the output $x^{k+1}$ of \Cref{algo:IRLS:mainstep:implementation} coincides with $x_{*}^{k+1}$, which finishes the proof.

\end{proof}
It is well-known that if the conjugates gradient method is used as an inexact solver using a limited number of iterations only, a sufficient condition for quantifying its accuracy can be achieved by bounding the condition number of the system matrix (see, e.g., \cite[Section 5.1]{NocedalWright06}). 

In fact, using few iterations of a CG method to solve the system \eqref{eq:gamma:system} leads typically to quite accurate solutions, in particular if $\varepsilon_k$ is small. This can be seen by analyzing the condition number of the matrix $G$
\begin{equation} \label{eq:systemmatrix:good}
G=\varepsilon_k \left(D_{I_k}^{-1}- \varepsilon_k \text{Id}_{I_k}\right)^{-1} + A_{I_k}^* Z A_{I_k} =: M_{1,k} + M_{2,k}
\end{equation}
of \cref{eq:gamma:system} in \Cref{algo:IRLS:mainstep:implementation}. 

We start by bounding the condition number of $M_{2,k}$. For that, let $x$ such that $ \twonorm{x}=1 $. We calculate that
\begin{equation*}
    \Vert x^T A_{I_k}^*(A A^*)^{-1} A_{I_k} x \Vert \le \Vert A_{I_k} x  \Vert^2 \Vert (A A^*)^{-1}  \Vert  
    \le (1+\delta)^2 \Vert (A A^*)^{-1} \Vert,
\end{equation*}
where in the last inequality we have assumed that $A_{I_k}$ satisfies the restricted isometry property. This implies that
\begin{equation}\label{ineq:aux1}
\Vert M_{1,k} \Vert =\Vert  A_k^*(A A^*)^{-1} A_k  \Vert \le (1+\delta)^2 \sigma_{\min} (A)^{-2}.
\end{equation}
In a similar way we can derive that
\begin{equation}\label{ineq:aux2}
\sigma_{\min} \left(  A_k^*(A A^*)^{-1} A_k   \right) \ge (1-\delta)^2 \Vert A \Vert^{-2}.    
\end{equation}
We note that $M_{1,k}$ is a diagonal matrix with entries given by $\varepsilon_k/(x^k_i-\varepsilon_k)$. Now, by additionally assuming that the iteration $x_k$ is already close to the ground truth and, without loss of generality, by assuming that $k$ is such that $||x^k-x_*||_{\infty}\leq 1/4||x_*||_{\infty}$, we have

\begin{equation}\label{ineq:aux3}
||M_{1,k}||_{2}= \varepsilon_k/(x_i^k-\varepsilon_k)\leq \varepsilon_k/(\frac{3}{4} \min_i \vert {(x_{\ast})}_i\vert  -\varepsilon_k).
\end{equation}
Note that this term becomes arbitrarily small, when $\varepsilon_k$ becomes arbitrarily small.

Hence, for $\varepsilon_k$ small enough it follows that
\begin{equation*}
\kappa \left(G \right) = \frac{\Vert G \Vert}{ \sigma_{\min} (G) } \overset{(a)}{\le} \frac{\Vert M_{1,k}\Vert +\Vert M_{2,k}\Vert  }{ \sigma_{\min} (M_{2,k})  - \Vert M_{1,k}\Vert}
\overset{(b)}{\le}    \frac{ 2 (1+\delta)^2 \Vert A \Vert^{2}}{ (1-\delta)^2 \sigma_{\min} (A)^{2}} = \frac{2(1+\delta)^2 }{(1-\delta)^2} \kappa \left( A \right)^2  .   
\end{equation*}
Here, in (a) we used Weyl's inequality and (b) holds as soon as $\varepsilon_k$ is small enough due to inequalities \eqref{ineq:aux1}, \eqref{ineq:aux2}, and \eqref{ineq:aux3}. Hence, we have shown that if $A$ is well-conditioned, then also the matrix $G$ will be well-conditioned and the CG-method will yield very accurate solutions.

\section{Proofs of technical lemmas}\label{sec:proof_technical_lemmas}

\subsection{Proof of \Cref{lemma:IRLS:basicproperties}} \label{sec:appendix:proofbasiclemma}

In this section, we prove that the chosen quadratic upper bound satisfies the majorization-minimization property.

\begin{proof}
We prove each of the three statements separately. 
\begin{enumerate}
	\item
	Let $x \in \R^N$. Then the $i$-th coordinate of $\diag(w_{\varepsilon}(x)) x$ is given by
	\[
	(\diag(w_{\varepsilon}(x)) x)_i = \begin{cases}
	\frac{x_i}{|x_i|} = \sgn(x_i), & \text{ if } |x_i| > \varepsilon, \\
	\frac{x_i}{\varepsilon}, & \text{ if } |x_i| \leq \varepsilon.
	\end{cases} = j'_{\varepsilon}(x_i)  = \left( \nabla \mathcal{J}_{\varepsilon}(x) \right)_i,
	\]
	where $\mathcal{J}_{\varepsilon}(x)$ is the gradient of $\mathcal{J}_{\varepsilon}$ at $x$.
	\item  This follows directly from the definition of $ Q_{\varepsilon}(x,z)$ and by setting $x=z$.
	\item We define $I := \{i \in [N]: |x_i| > \varepsilon \}$ and write the difference $Q_{\varepsilon}(z,x) - \mathcal{J}_{\varepsilon}(z)$ as
	\begin{equation*}
	\begin{split}
	Q_{\varepsilon}(z,x)\! -\! \mathcal{J}_{\varepsilon}(z) &= \frac{1}{2}\left(\langle z, \diag(w_{\varepsilon}(x)) z\rangle - \langle x,  \diag(w_{\varepsilon}(x)) x \rangle  \right) \\
	&=\sum_{i \in I} \left(\frac{1}{2}|x_i| + \frac{1}{2} \frac{z_i^2}{|x_i|} - j_{\varepsilon}(z_i)\right) +  \sum_{i \in I^c} \left(\frac{1}{2} \varepsilon + \frac{1}{2} \frac{z_i^2}{\varepsilon} - j_{\varepsilon}(z_i)\right)
	\end{split}
	\end{equation*}
	and show that each summand of the two sums is non-negative. In particular, if $i \in I$, then assume first that $|z_i| > \varepsilon$. Then 
	\[
	\frac{1}{2}|x_i| + \frac{1}{2} \frac{z_i^2}{|x_i|} - j_{\varepsilon}(z_i) = \frac{1}{2}\left(|x_i| + \frac{z_i^2}{|x_i|}\right) - |z_i| \geq |z_i| - |z_i| = 0
	\]
	due to inequality $a \leq \frac{1}{2}(a^2/b + b)$, which holds	 for any $b > 0$.
	
	On the other hand, if $|z_i| \leq \varepsilon$, then
	\begin{equation}
	\begin{split}
	\frac{1}{2}|x_i| + \frac{1}{2} \frac{z_i^2}{|x_i|} - j_{\varepsilon}(z_i) &= \frac{1}{2}|x_i| + \frac{1}{2} \frac{z_i^2}{|x_i|} - \frac{1}{2}\left(\frac{z_i^2}{\varepsilon}+\varepsilon\right)\\ 
	& = \frac{1}{2}\big( |x_i| - \varepsilon\big) + \frac{1}{2} z_i^2\left(\frac{1}{|x_i|}-\frac{1}{\varepsilon}\right) \\
	&\ge \frac{1}{2}\big( |x_i| - \varepsilon\big) + \frac{1}{2} \varepsilon^2 \left(\frac{1}{|x_i|}-\frac{1}{\varepsilon}\right) \\
	&=  \frac{1}{2}\left(|x_i| +  \frac{\varepsilon^2}{|x_i|}\right) - \varepsilon \geq \varepsilon - \varepsilon = 0,
	\end{split}
	\end{equation}
	where we used that $\frac{1}{|x_i|}-\frac{1}{\varepsilon} < 0$ in the first inequality. In the second inequality, we again used $a \leq \frac{1}{2}(a^2/b + b)$  for any $b > 0$.  Now let $i \in I^c$. We again consider the two cases, $|z_i| \leq \varepsilon$ and $|z_i| > \varepsilon$. In the first case we have that $\frac{1}{2}\varepsilon + \frac{1}{2} \frac{z_i^2}{\varepsilon} - j_{\varepsilon}(z_i) = 0$, and in the second case we have that
	\begin{equation*}
		\frac{1}{2}\varepsilon + \frac{1}{2} \frac{z_i^2}{\varepsilon} - j_{\varepsilon}(z_i) = \frac{1}{2}\varepsilon + \frac{1}{2} \frac{z_i^2}{\varepsilon} - |z_i| \ge  |z_i| - |z_i| = 0,
	\end{equation*}	
which concludes the proof.
\end{enumerate}
\end{proof}

\subsection{Proof of \Cref{prop:DDFG:supportcondition}} \label{appendix:proof:supportcondition}

In this section, we prove \Cref{prop:DDFG:supportcondition}, namely, if $x^k$ , k-th IRLS iteration, is already close enough from the ground truth, then we would observe that the support would have been already identified and, consequently, the hardest part of the sparse recovery problem would have been solved.

\begin{proof}
Let $j \in S^c$, where $S$ is the support set of $x_{*}$. Then
\[
	|x_j^k| \leq \sum_{i \in S^c} |x_i^k| < \min_{ i \in S} |(x_*)_{i}| - \sum_{i \in S} |x_i^k- (x_*)_i|,
\]
using the assumption $\sum_{i \in S^c} |(x^k)_{i}| + \sum_{i \in S} |x_i^k-(x_*)_{i}| = \|x^k - x_{*}\|_1 < \min_{ i \in S} |(x_*)_{i}|$. 

On the other hand, for $j \in S$, we can estimate that
\[
|x_j^k| = |x_j^k - (x_*)_j + (x_*)_j| \geq |(x_*)_j| - |x_j^k - (x_*)_j| \geq \min_{i \in S} |(x_*)_i| - \sum_{i \in S} |x_i^k - (x_*)_i|.
\]
Taking the previous two inequalities together, we conclude that 
\[\max_{j \in S^c} |x_j^k| < \min_{j \in S} |x_j^k|,
\]
which finishes the proof.
\end{proof}

\end{document}